%-----------------------------------------------------------------------
% Beginning of amsproc.template
%-----------------------------------------------------------------------
%
%     AMS-LaTeX v.2 template for use with amsproc
%
%     Remove any commented or uncommented macros you do not use.

\documentclass[11pt,reqno]{amsart}
\usepackage{amsmath,amsthm,amssymb}

\newtheorem{theorem}{Theorem}[section]
\newtheorem{lemma}[theorem]{Lemma}
\newtheorem{proposition}[theorem]{Proposition}
\newtheorem{corollary}[theorem]{Corollary}

\theoremstyle{definition}

\newtheorem{definition}[theorem]{Definition}

\newtheorem{remark}[theorem]{Remark}
\newtheorem{notation}[theorem]{Notation}
\newtheorem{example}[theorem]{Example}

\numberwithin{equation}{section}

\begin{document}

\title{$\frak{g}$-quasi-Frobenius Lie algebras}

\author{David N. Pham}
\address{Department of Mathematics $\&$ Computer Science, QCC CUNY, Bayside, NY 11364}
\curraddr{}
\email{dpham90@gmail.com}
\thanks{This work is supported by PSC-CUNY Research Award $\#$ 69041-0047.}
\thanks{The author wishes to thank J. Funk and F. Ye for helpful discussions.}

\subjclass[2010]{22Exx, 22E60, 53D05, 18A05, 18E05}

\keywords{symplectic Lie groups, quasi-Frobenius Lie algebras, Lie bialgebras, Drinfeld double, group actions}

\dedicatory{}

\begin{abstract}
A Lie version of Turaev's $\overline{G}$-Frobenius algebras from 2-dimensional homotopy quantum field theory is proposed.  The foundation for this Lie version is a structure we call a \textit{$\frak{g}$-quasi-Frobenius Lie algebra} for $\frak{g}$ a finite dimensional Lie algebra.  The latter consists of a quasi-Frobenius Lie algebra $(\frak{q},\beta)$ together with a left $\frak{g}$-module structure which acts on $\frak{q}$ via derivations and for which $\beta$ is $\frak{g}$-invariant.   Geometrically, $\frak{g}$-quasi-Frobenius Lie algebras are the Lie algebra structures associated to symplectic Lie groups with an action by a Lie group $G$ which acts via symplectic Lie group automorphisms.  In addition to geometry, $\frak{g}$-quasi-Frobenius Lie algebras can also be motivated from the point of view of category theory.  Specifically, $\frak{g}$-quasi Frobenius Lie algebras correspond to \textit{quasi Frobenius Lie objects} in $\mathbf{Rep}(\frak{g})$.  If $\frak{g}$ is now equipped with a Lie bialgebra structure, then the categorical formulation of $\overline{G}$-Frobenius algebras given in \cite{KP} suggests that the Lie version of a $\overline{G}$-Frobenius algebra is a quasi-Frobenius Lie object in $\mathbf{Rep}(D(\frak{g}))$, where $D(\frak{g})$ is the associated (semiclassical) Drinfeld double.  We show that if $\frak{g}$ is a quasitriangular Lie bialgebra, then every $\frak{g}$-quasi-Frobenius Lie algebra has an induced $D(\frak{g})$-action which gives it the structure of a $D(\frak{g})$-quasi-Frobenius Lie algebra. 
\end{abstract}

\date{\today}

\maketitle

\section{Introduction}
Renewed interest in Frobenius algebras arose shortly after Witten's introduction of \textit{Topological Qunatum Field Theory} (TQFT) in \cite{Witt}.  Shortly afterwards, Atiyah proposed a set of axioms for TQFT \cite{At}, thus making Witten's work more accessible to the mathematical community.  Working from Atiyah's axioms, L. Abrams showed that 2-dimensional TQFTs are classified by commutative Frobenius algebras \cite{Ab}.  Hence, in the 2-dimensional case, the algebraic structure of a TQFT is that of a Frobenius algebra.  

The notion of a $(d+1)$-dimensional TQFT was generalized to a $(d+1)$-dimensional \textit{Homotopy Quantum Field Theory} (HQFT) by  V. Turaev in \cite{Tu} by equipping closed $d$-manifolds and $(d+1)$-dimensional cobordisms with homotopy classes of maps into a target space $X$.  In the special case when $X$ is a $K(\overline{G},1)$-space for $\overline{G}$ a finite group, one finds that the 2-dimensional HQFTs are classified by Frobenius algebras with a $\overline{G}$-grading and a $\overline{G}$-action which satisfies a number of conditions \cite{Tu, K}.   These Frobenius algebras came to be called \textit{$\overline{G}$-Frobenius algebras} (or \textit{crossed $\overline{G}$-algebras}).   

In \cite{KP}, a categorical formulation of $\overline{G}$-Frobenius algebras was presented where $\overline{G}$-Frobenius algebras were shown to correspond to certain types of Frobenius objects in $\mathbf{Rep}(D(k[\overline{G}]))$, the braided monoidal category of finite dimensional left $D(k[\overline{G}])$-modules, where $D(k[\overline{G}])$ is the Drinfeld double of the group ring $k[\overline{G}]$ with its usual Hopf structure.  Now the semiclassical analogue of $D(k[\overline{G}])$ (or more generally $D(H)$ for $H$ a finite dimensional Hopf algebra) is $D(\frak{g})$, the Drinfeld double of a finite dimensional Lie bialgbera $(\frak{g},\gamma)$  \cite{Drin, KS, CP, ES}.   The relationship between $\overline{G}$-Frobenius algebras and $D(k[\overline{G}])$ in \cite{KP} motivates the following question:\\

\noindent \textit{With $(\frak{g},\gamma)$ fixed, what structure plays the role of a $\overline{G}$-Frobenius algebra for $D(\frak{g})$?}\\  

Since $D(\frak{g})$ is the Lie version of $D(k[\overline{G}])$, the structure in question should be the Lie version of a $\overline{G}$-Frobenius algebra.  To answer the aforementioned question, we introduce the notion of \textit{$\frak{g}$-quasi-Frobenius Lie algebras} for $\frak{g}$ a finite dimensional Lie algebra. A $\frak{g}$-quasi-Frobenius Lie algebra consists of a quasi-Frobenius Lie algebra $(\frak{q},\beta)$ together with a left $\frak{g}$-module structure which acts on $\frak{q}$ via derivations and for which $\beta$ is $\frak{g}$-invariant. Geometrically,  $\frak{g}$-quasi-Frobenius Lie algebras are the Lie algebra structures of symplectic Lie groups with an action by a Lie group $G$ which acts via symplectic Lie group automorphisms.   We call the aforementioned structures \textit{$G$-symplectic Lie groups}.  

Interestingly,  $\frak{g}$-quasi-Frobenius Lie algebras have a categorical formulation.  To obtain this formulation, we introduce the notion of a \textit{quasi-Frobenius Lie object} for any additive symmetric monoidal category.  The work of Goyvaerts and Vercuysse on the categorification of Lie algebras \cite{GV} provides the foundation for defining quasi-Frobenius Lie objects.  The latter then yields an alternate (yet equivalent) definition of a $\frak{g}$-quasi-Frobenius Lie algebra: \textit{a $\frak{g}$-quasi Frobenius Lie algebra is simply a quasi Frobenius Lie object in $\mathbf{Rep}(\frak{g})$}, where $\mathbf{Rep}(\frak{g})$ is the category of finite dimensional representations of $\frak{g}$.  Using the categorical formulation of \cite{KP} as motivation, we obtain the Lie version of a $\overline{G}$-Frobenius algebra: for a fixed finite dimensional Lie bialgebra $(\frak{g},\gamma)$, the Lie version of a $\overline{G}$-Frobenius algebra is a quasi-Frobenius Lie object in $\mathbf{Rep}(D(\frak{g}))$.  In other words, with respect to $(\frak{g},\gamma)$, a $D(\frak{g})$-quasi-Frobenius Lie algebra is the Lie version of a $\overline{G}$-Frobenius algebra.  The definition of $D(\frak{g})$ implies that a $D(\frak{g})$-quasi-Frobenius Lie algebra is equivalent to a quasi-Frobenius Lie algebra $(\frak{q},\beta)$ which is both a $\frak{g}$ and $\frak{g}^\ast$-quasi-Frobenius Lie algebra where the $\frak{g}$ and $\frak{g}^\ast$ actions satisfy a  certain compatibility condition.  

The rest of the paper is organized as follows.  In section 2, we give a brief review of quasi-Frobenius Lie algebras, symplectic Lie groups, Lie bialgberas, and the Drinfeld double. In section 3, we formally define $\frak{g}$-quasi-Frobenius Lie algebras  and prove a general result for their construction.  We conclude the section with the categorical formulation of these structures.  In section 4, $G$-symplectic Lie groups are introduced.  We show that $\frak{g}$-quasi-Frobenius Lie algebras are the Lie algebra structures of $G$-symplectic Lie groups.  In addition, we show that the category of finite dimensional $\frak{g}$-quasi-Frobenius Lie algebras is equivalent to the category of simply connected $G$-symplectic Lie groups where $G$ is also simply connected.  In section 5, we focus our attention on $D(\frak{g})$-quasi-Frobenius Lie algebras.  We show that if $\frak{g}$ is a quasitriangular Lie bialgebra, then every $\frak{g}$-quasi-Frobenius Lie algebra has an induced $D(\frak{g})$-action which extends the original $\frak{g}$-action and gives the underlying quasi-Frobenius Lie algebra the structure of a $D(\frak{g})$-quasi-Frobenius Lie algebra.  In particular, for any finite dimensional Lie algebra $\frak{g}$ (viewed as a Lie bialgebra with co-bracket $\gamma\equiv 0$), every $\frak{g}$-quasi-Frobenius Lie algebra is a $D(\frak{g})$-quasi-Frobenius Lie algebra, where $D(\frak{g})$ is the Drinfeld double of $(\frak{g},0)$.

\section{Preliminaries}
\noindent In this section, we briefly review some of the relevant background for the current paper.  Throughout this section, ${k}$ is a field of characteristic zero.  

\subsection{Quasi-Frobenius Lie Algebras}
\noindent The definition of a \textit{Frobenius Lie algebra} \cite{Ooms, Ooms1} is modeled after the definition of a Frobenius algebra.  Formally, a Frobenius Lie algebra is defined as follows:
\begin{definition}
\label{QFLA1}
A \textit{Frobenius Lie algebra} over $k$ is a pair $(\frak{g},\alpha)$ where $\mathfrak{g}$ is a Lie algebra and $\alpha: \frak{g}\rightarrow k$ is a linear map with the property that the skew-symmetric bilinear form $\beta$ on $\frak{g}$ defined by
\begin{equation}
\nonumber
\beta(x,y):=\alpha([x,y])\hspace*{0.1in} \forall~x,y\in \frak{g}
\end{equation} 
is nondegenerate.
\end{definition}
\noindent As a consequence of the Jacobi identity, the skew-symmetric bilinear form $\beta$ in Definition \ref{QFLA1} satisfies the following identity: 
\begin{equation}
\label{QFLA2}
\beta([x,y],z)+\beta([y,z],x)+\beta([z,x],y)=0,\hspace*{0.1in} \forall~x,y,z\in \frak{g}.
\end{equation}
Equation (\ref{QFLA2}) is equivalent to the statement that $\beta$ is a 2-cocycle in the Lie algebra cohomology of $\frak{g}$ with values in $k$ (where $\frak{g}$ acts trivially on $k$).  This motivates the following generalization of Definition \ref{QFLA1}:
\begin{definition}
\label{QFLA3}
A \textit{quasi-Frobenius Lie algebra} over $k$ is a pair $(\frak{g},\beta)$ where $\frak{g}$ is a Lie algebra over $k$ and $\beta$ is a nondegenerate 2-cocycle in the Lie algebra cohomology of $\frak{g}$ with values in $k$ (where $\frak{g}$ acts trivially on $k$).
\end{definition}
\begin{remark}
\label{QFLA4}
A quasi-Frobenius Lie algebra $(\frak{g},\beta)$ is a Frobenius Lie algebra iff $\beta$ is exact, i.e., $\beta(x,y)=(-\delta \alpha)(x,y):=\alpha([x,y])$ for some linear map $\alpha: \frak{g}\rightarrow k$.  
\end{remark}

\begin{proposition}
\label{QFLA5}
Every 2-dimensional non-abelian Lie algebra admits the structure of a Frobenius Lie algebra.  In particular, every 2-dimensional non-abelian quasi-Frobenius Lie algebra is Frobenius.
\end{proposition}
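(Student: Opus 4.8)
The plan is to reduce everything to the classification of $2$-dimensional non-abelian Lie algebras. First I would recall that over any field such a $\frak{g}$ is unique up to isomorphism: since $\frak{g}$ is non-abelian and $2$-dimensional, $[\frak{g},\frak{g}]$ is $1$-dimensional, so picking $0\neq y\in[\frak{g},\frak{g}]$ and any $x\notin[\frak{g},\frak{g}]$ and then rescaling $x$ appropriately yields a basis $\{x,y\}$ with $[x,y]=y$. The whole argument is carried out in this model.

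For the first claim I would simply exhibit the Frobenius functional. Let $\alpha\colon\frak{g}\to k$ be defined by $\alpha(x)=0$ and $\alpha(y)=1$. Then $\beta(u,v):=\alpha([u,v])$ satisfies $\beta(x,y)=\alpha(y)=1$ while $\beta(x,x)=\beta(y,y)=0$, so its matrix in the basis $\{x,y\}$ is $\left(\begin{smallmatrix}0&1\\-1&0\end{smallmatrix}\right)$, which is invertible. Hence $(\frak{g},\alpha)$ is a Frobenius Lie algebra in the sense of Definition~\ref{QFLA1}.

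For the ``in particular'' clause I would start from an arbitrary nondegenerate $2$-cocycle $\beta$ on this same $\frak{g}$ and show it is automatically exact, so that Remark~\ref{QFLA4} applies. The key observation is that a skew-symmetric bilinear form on a $2$-dimensional vector space is completely determined by the single scalar $c:=\beta(x,y)$, and nondegeneracy forces $c\neq0$. Defining $\alpha'$ by $\alpha'(x)=0$ and $\alpha'(y)=c$, the form $(u,v)\mapsto\alpha'([u,v])$ agrees with $\beta$ on the pair $(x,y)$, hence on all of $\frak{g}\times\frak{g}$ by bilinearity and skew-symmetry; thus $\beta=-\delta\alpha'$ is exact and $(\frak{g},\beta)$ is Frobenius.

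I do not anticipate a genuine obstacle here: the statement is elementary once the structure of $2$-dimensional non-abelian Lie algebras is invoked. The one point worth flagging is that in dimension $2$ the cocycle identity~\eqref{QFLA2} is vacuous --- its left-hand side is an alternating function of three vectors in a $2$-dimensional space and therefore vanishes identically --- so ``quasi-Frobenius'' in dimension $2$ means nothing more than ``equipped with a nondegenerate skew-symmetric bilinear form,'' which is precisely what makes the exactness step immediate.
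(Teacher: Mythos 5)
Your proposal is correct and follows essentially the same route as the paper: normalize to a basis with $[x,y]=y$, exhibit $\alpha$ vanishing on $x$ for the Frobenius claim, and for the quasi-Frobenius case set $\alpha(y)=\beta(x,y)$ so that $\beta$ is exact. Your added remarks (nondegeneracy forcing $\beta(x,y)\neq0$ and the vacuity of the cocycle condition in dimension $2$) are accurate but not needed beyond what the paper does.
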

\begin{proof}
Let $\frak{g}$ be a 2-dimensional non-abelian Lie algebra.  Then $\frak{g}$ admits a basis $u_1,~u_2$ such that $[u_1,u_2]=u_2$.  Let $\alpha: \frak{g}\rightarrow k$ be the linear map defined by $\alpha(u_1)=0$ and $\alpha(u_2)=1$.  Then $(\frak{g},\alpha)$ is a Frobenius Lie algebra.  

If $(\frak{g},\beta)$ is a quasi-Frobenius Lie algebra, set $\alpha(u_1)=0$ and $\alpha(u_2)=\beta(u_1,u_2)$.  Then it's easy to see that  $\beta(x,y)=\alpha([x,y])$ for all $x,y\in \frak{g}$.  Hence, $(\frak{g},\beta)$ is Frobenius.  
\end{proof}
\begin{remark}
\label{QFLA6}
Since every finite dimensional quasi-Frobenius Lie algebra $(\frak{g},\beta)$ is also a symplectic vector space, it follows that  the dimension of $\frak{g}$ is necessarily even.
\end{remark}

\begin{proposition}
\label{QLFA7}
Let $\frak{g}$ be a Lie algebra of dimension $n$ over $k$ and let $e_1,e_2,\dots, e_n$ be a basis of $\frak{g}$.  Then the following statements are equivalent: 
\begin{itemize}
\item[(1)] There exists $\alpha\in \frak{g}^\ast$ such that $(\frak{g},\alpha)$ is a Frobenius Lie algebra.
\item[(2)] There exists $\alpha\in \frak{g}^\ast$ such that $\mbox{det}(\alpha([e_i,e_j]))\neq 0$
\item[(3)] $\mbox{det}([e_i,e_j])\neq 0$, where $[e_i,e_j]\in \frak{g}$ are viewed as elements of the symmetric algebra $S(\frak{g})$.
\end{itemize}
\end{proposition}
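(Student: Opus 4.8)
The plan is to establish (1) $\Leftrightarrow$ (2) directly from the definitions, and then to connect (2) and (3) by passing between the ``formal'' determinant, which lives in the symmetric algebra $S(\frak{g})$, and its scalar evaluations at the points of $\frak{g}^\ast$.

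For (1) $\Leftrightarrow$ (2) I would argue from the definition of a Frobenius Lie algebra: given $\alpha\in\frak{g}^\ast$, let $\beta(x,y):=\alpha([x,y])$ be the associated skew-symmetric bilinear form on $\frak{g}$. Relative to the basis $e_1,\dots,e_n$ the Gram matrix of $\beta$ is precisely $\bigl(\alpha([e_i,e_j])\bigr)_{i,j}$, and $\beta$ is nondegenerate exactly when this matrix is invertible, i.e. has nonzero determinant. Since $(\frak{g},\alpha)$ is a Frobenius Lie algebra if and only if $\beta$ is nondegenerate, this yields the equivalence of (1) and (2) immediately.

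For the bridge between (2) and (3), I would view $M:=\bigl([e_i,e_j]\bigr)_{i,j}$ as a matrix with entries in $\frak{g}\subseteq S(\frak{g})$; since $S(\frak{g})$ is a commutative ring, $\det M\in S(\frak{g})$ is a well-defined element (homogeneous of degree $n$). Each $\alpha\in\frak{g}^\ast$ extends uniquely to a $k$-algebra homomorphism $\mathrm{ev}_\alpha\colon S(\frak{g})\to k$; under the standard identification of $S(\frak{g})$ with the algebra of polynomial functions on $\frak{g}^\ast$, $\mathrm{ev}_\alpha$ is evaluation at $\alpha$. Because the determinant is a fixed polynomial expression in the matrix entries and $\mathrm{ev}_\alpha$ respects sums and products, $\mathrm{ev}_\alpha(\det M)=\det\bigl(\mathrm{ev}_\alpha M\bigr)=\det\bigl(\alpha([e_i,e_j])\bigr)$. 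Hence (2) holds for a particular $\alpha$ precisely when $\mathrm{ev}_\alpha(\det M)\neq 0$.

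It then remains to observe that $\det M\neq 0$ in $S(\frak{g})$ (statement (3)) if and only if $\mathrm{ev}_\alpha(\det M)\neq 0$ for some $\alpha\in\frak{g}^\ast$; chaining this with the preceding equivalences gives the proposition. One implication is trivial, and the other uses that a field of characteristic zero is infinite, so a nonzero element of $S(\frak{g})$, regarded as a polynomial function on the finite-dimensional space $\frak{g}^\ast$, cannot vanish at every point. This last transition — from ``nonzero in $S(\frak{g})$'' to ``nonzero at some $\alpha$'' — is the only step that genuinely requires care, since it is where the hypothesis on $k$ is used; everything else is bookkeeping with the evaluation homomorphism.
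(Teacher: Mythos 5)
Your proposal is correct and follows essentially the same route as the paper: the equivalence (1) $\Leftrightarrow$ (2) read off from the Gram matrix, the extension of $\alpha$ to an algebra map $S(\frak{g})\to k$ commuting with the determinant, and the use of the infinitude of $k$ (characteristic zero) to find a point where the nonzero polynomial $\det([e_i,e_j])$ does not vanish. The only cosmetic difference is that you phrase the evaluation as a point of $\frak{g}^\ast$ while the paper picks scalars $\lambda_1,\dots,\lambda_n$ and sets $\alpha(e_i)=\lambda_i$, which is the same thing.
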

\begin{proof}
$(1)\Leftrightarrow (2)$. Immediate.

$(2)\Rightarrow (3)$.   Recall that $S(\frak{g})$ is naturally isomorphic to the polynomial ring in $n$-variables where the variables are taken to be the basis $e_1,e_2,\dots, e_n$.  Extend the linear map $\alpha: \frak{g}\rightarrow k$ to a unit preserving algebra map $\alpha: S(\frak{g})\rightarrow k$ via
\begin{equation}
\nonumber
\alpha(v_1v_2\cdots v_r):=\alpha(v_1)\alpha(v_2)\cdots \alpha(v_r)
\end{equation}
for $v_1,\dots, v_r\in \frak{g}$.  Then 
\begin{align}
\nonumber
\alpha(\mbox{det}([e_i,e_j]))=\mbox{det}(\alpha([e_i,e_j]))\neq 0,
\end{align}
which implies that $\mbox{det}([e_i,e_j])\neq 0$.

$(2)\Leftarrow (3)$.  Let $p=\mbox{det}([e_i,e_j])\in S(\frak{g})$.  Since $p=p(e_1,\dots, e_n)\neq 0$ and $k$ is infinite, there exists $\lambda_i\in k$ such that $p(\lambda_1,\dots, \lambda_n)\neq 0$ (see Theorem 3.76 of \cite{Vin}).   Let $\alpha: \frak{g}\rightarrow k$ be the linear map defined by $\alpha(e_i)=\lambda_i$ for $i=1,\dots, n$.  As before, extend $\alpha: \frak{g}\rightarrow k$ to an algebra map $\alpha: S(\frak{g})\rightarrow k$.  Then 
\begin{align}
\nonumber
\mbox{det}(\alpha([e_i,e_j]))&=\alpha(\mbox{det}([e_i,e_j]))\\
\nonumber
&=\alpha(p(e_1,\dots,e_n))\\
\nonumber
&=p(\alpha(e_1),\dots, \alpha(e_n))\\
\nonumber
&=p(\lambda_1,\dots,\lambda_n)\\
\nonumber
&\neq 0.
\end{align}
\end{proof}

\noindent We now recall two examples.  The first is Frobenius and the second is quasi-Frobenius but not Frobenius \cite{Ooms1, Burde}.

\begin{example}
\label{QFLA8}
Let $\frak{g}$ be the 4-dimensional Lie algebra with basis $\{x_1,\dots, x_4\}$ and non-zero commutator relations:
\begin{align}
\nonumber
[x_1,x_2]=\frac{1}{2}x_2+x_3,\hspace*{0.1in} [x_1,x_3]=\frac{1}{2}x_3,\hspace*{0.1in}[x_1,x_4]=x_4,\hspace*{0.1in} [x_2,x_3]=x_4
\end{align}
Then $\mbox{det}([x_i,x_j])=(x_4)^4\neq 0$, where $[x_i,x_j]$ are regarded as elements of the symmetric algebra $S(\frak{g})$.  By Proposition \ref{QLFA7}, there exists a linear map $\alpha: \frak{g}\rightarrow k$ for which $(\frak{g},\alpha)$ is a Frobenius Lie algebra.
\end{example}

\begin{example}
\label{QFLA8a}
Let $\frak{q}$ be the 4-dimensional Lie algebra  with basis $\{x_1,\dots, x_4\}$ and non-zero commutator relations:
\begin{align}
\nonumber
[x_1,x_2]=x_3,\hspace*{0.1in} [x_1,x_3]=x_4
\end{align}
Since $\mbox{det}([x_i,x_j])=0$, $\frak{q}$ cannot be Frobenius by Proposition \ref{QLFA7}.  However, it does admit the structure of a quasi-Frobenius Lie algebra.  As an example of this, let $\beta$ be the nondegenerate, skew-symmetric bilinear form given by
\begin{equation}
\nonumber
\beta= x_1^\ast\wedge x_4^\ast+x_2^\ast\wedge x_3^\ast
\end{equation}
where $\{x_1^\ast,\dots, x_4^\ast\}$ is the dual basis.  A direct calculation shows that $\beta$ satisfies the 2-cocycle condition. Hence, $(\frak{q},\beta)$ is quasi-Frobenius.
\end{example}

\begin{definition}
\label{QFLA49}
Let $(\frak{g}_1,\beta_1)$ and $(\frak{g}_2,\beta_2)$ be quasi-Frobenius Lie algebras.  A \textit{quasi-Frobenius Lie algebra homomorphism} from $(\frak{g}_1,\beta_1)$  to $(\frak{g}_2,\beta_2)$  is a Lie algebra homomorphism $\varphi: \frak{g}_1\rightarrow \frak{g}_2$ such that $\varphi^\ast\beta_2=\beta_1$, that is,
\begin{equation}
\label{QFLA9e1}
\beta_1(u,v)=\beta_2(\varphi(u),\varphi(v)),\hspace*{0.1in} \forall~u,v\in \frak{g}_1.
\end{equation}
If $\varphi: \frak{g}_1\rightarrow \frak{g}_2$ satisfies (\ref{QFLA9e1}) and is also a Lie algebra isomorphism, then $\varphi$ is an isomorphism of quasi-Frobenius Lie algebras. 
\end{definition}

\begin{proposition}
\label{QFLA410}
Let $\varphi: (\frak{g}_1,\beta_1)\rightarrow (\frak{g}_2,\beta_2)$ be a quasi-Frobenius Lie algebra map.  If $\dim \frak{g}_1=\dim\frak{g}_2<\infty$, then $\varphi$ is an isomorphism of quasi-Frobenius Lie algebras. 
\end{proposition}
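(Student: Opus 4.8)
The plan is to exploit the fact that pulling back a nondegenerate form along $\varphi$ forces $\varphi$ to be injective, and then invoke equality of dimensions. First I would take any $u\in\frak{g}_1$ with $\varphi(u)=0$ and compute, for an arbitrary $v\in\frak{g}_1$,
\begin{equation}
\nonumber
\beta_1(u,v)=\beta_2(\varphi(u),\varphi(v))=\beta_2(0,\varphi(v))=0,
\end{equation}
using the defining identity (\ref{QFLA9e1}). Since $\beta_1$ is nondegenerate, this yields $u=0$, so $\ker\varphi=0$ and $\varphi$ is injective.

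Next I would use the hypothesis $\dim\frak{g}_1=\dim\frak{g}_2<\infty$: an injective linear map between finite dimensional vector spaces of the same dimension is automatically surjective, hence a linear isomorphism. Combined with the assumption that $\varphi$ is a Lie algebra homomorphism, this makes $\varphi$ a Lie algebra isomorphism. Since $\varphi$ already satisfies $\varphi^\ast\beta_2=\beta_1$ by hypothesis, Definition \ref{QFLA49} tells us that $\varphi$ is an isomorphism of quasi-Frobenius Lie algebras, which is the claim.

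There is essentially no obstacle here; the only thing to be slightly careful about is that the argument genuinely uses finiteness of the dimension (to pass from injective to bijective) and nondegeneracy of $\beta_1$ (to get injectivity in the first place) — nondegeneracy of $\beta_2$ is not needed. This also explains why the finite dimensionality hypothesis cannot simply be dropped.
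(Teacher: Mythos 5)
Your proof is correct and follows essentially the same route as the paper: injectivity of $\varphi$ is forced by nondegeneracy of $\beta_1$ together with $\varphi^\ast\beta_2=\beta_1$ (the paper argues the contrapositive, picking $v$ with $\beta_1(u,v)\neq 0$ for a nonzero $u$), and equality of finite dimensions then gives bijectivity. Your added remarks that only nondegeneracy of $\beta_1$ is used and that finite dimensionality is essential are accurate observations, but the core argument coincides with the paper's.
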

\begin{proof}
Since $\dim \frak{g}_1=\dim\frak{g}_2<\infty$, it suffices to show that $\varphi$ is injective.  Let $u\in \frak{g}_1$ be any nonzero element.  Since $\beta$ is nondegenerate, there exists $v\in \frak{g}_1$ such that $\beta(u,v)\neq 0$.  Hence, 
\begin{equation}
\nonumber
\beta_2(\varphi(u),\varphi(v))=\beta_1(u,v)\neq 0,
\end{equation}
which implies that $\varphi(u)\neq 0$.  This completes the proof.  
\end{proof}

\subsection{Symplectic Lie Groups}
\noindent In this section, we recall the correspondence between \textit{symplectic Lie groups} \cite{BC, Chu} and quasi-Frobenius Lie algebras.   
\begin{definition}
\label{SLG1}
A \textit{symplectic Lie group} is a pair $(G,\omega)$ where $G$ is a Lie group and $\omega$ is a left-invariant symplectic form on $G$.  
\end{definition}
\noindent The next result shows that the Lie algebra of a symplectic Lie group is naturally a quasi-Frobenius Lie algebra.  
\begin{proposition}
\label{SLG2}
Let $(G,\omega)$ be a symplectic Lie group.  Then $(\frak{g},\omega_e)$ is a quasi-Frobenius Lie algebra.
\end{proposition}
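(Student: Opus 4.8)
The plan is to transfer the symplectic form from the group $G$ to its Lie algebra $\frak{g}=T_eG$ by evaluation at the identity, and then show that the two defining properties of $\omega$ as a left-invariant symplectic form — namely nondegeneracy and closedness — translate precisely into the two conditions required of $(\frak{g},\omega_e)$ in Definition \ref{QFLA3}, i.e.\ that $\omega_e$ is a nondegenerate $2$-cocycle. Set $\beta:=\omega_e$, the value of $\omega$ at the identity, viewed as a skew-symmetric bilinear form on $\frak{g}$. Nondegeneracy of $\beta$ is immediate: since $\omega$ is a symplectic form it is nondegenerate at every point, in particular at $e$.

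The substantive step is the $2$-cocycle identity \eqref{QFLA2}. First I would recall that a left-invariant $2$-form $\omega$ on $G$ is completely determined by its value $\beta$ at $e$, via $\omega_g = (L_{g^{-1}})^\ast \beta$, and conversely every skew form on $\frak{g}$ extends to a unique left-invariant $2$-form. Next, I would evaluate $d\omega$ on left-invariant vector fields $X,Y,Z$ corresponding to $x,y,z\in\frak{g}$, using the invariant Cartan formula for the exterior derivative of a $2$-form:
\begin{align}
\nonumber
d\omega(X,Y,Z) = {}& X\,\omega(Y,Z) - Y\,\omega(X,Z) + Z\,\omega(X,Y)\\
\nonumber
& - \omega([X,Y],Z) + \omega([X,Z],Y) - \omega([Y,Z],X).
\end{align}
Because $\omega$ is left-invariant, each function $\omega(Y,Z)=\beta(y,z)$ is constant on $G$, so the three derivative terms vanish; and $[X,Y]$ is the left-invariant vector field associated to $[x,y]$, so $\omega([X,Y],Z)=\beta([x,y],z)$, etc. Hence $d\omega=0$ on $G$ if and only if
\begin{equation}
\nonumber
\beta([x,y],z) - \beta([x,z],y) + \beta([y,z],x) = 0
\end{equation}
for all $x,y,z\in\frak{g}$, which after using skew-symmetry of $\beta$ to rewrite $-\beta([x,z],y)=\beta([z,x],y)$ and $\beta([y,z],x)=\beta([y,z],x)$ is exactly \eqref{QFLA2}. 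Since $\omega$ is in particular closed, this identity holds, so $\beta$ is a $2$-cocycle.

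Combining the two observations, $(\frak{g},\omega_e)=(\frak{g},\beta)$ is a nondegenerate $2$-cocycle on $\frak{g}$, hence a quasi-Frobenius Lie algebra, which completes the proof. The only mild subtlety — the step I would be most careful about — is the bookkeeping of signs when passing between the invariant Cartan formula for $d\omega$ and the cyclic form of the cocycle condition in \eqref{QFLA2}; everything else is a direct consequence of left-invariance reducing the computation to constants and brackets on $\frak{g}$.
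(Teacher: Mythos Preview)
Your proof is correct and follows essentially the same route as the paper: both arguments note that nondegeneracy at $e$ is immediate, then evaluate the Cartan formula for $d\omega$ on left-invariant vector fields, use left-invariance to kill the directional-derivative terms, and read off the $2$-cocycle identity from $d\omega=0$. The only cosmetic difference is that the paper first verifies explicitly that $\omega(\tilde{x},\tilde{y})$ is constant before invoking the Cartan formula, whereas you fold that observation into the formula itself.
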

\begin{proof}
Let $\mathfrak{X}_l(G)$ denote the space of left-invariant vector fields on $G$ and endow $\frak{g}:=T_eG$ with the Lie algebra structure of $\mathfrak{X}_l(G)$.   Also, let $\tilde{x}$ denote the left-invariant vector field associated with $x\in \frak{g}$.  We now show that $(\frak{g},\omega_e)$ is a quasi-Frobenius Lie algebra.  Since $\omega_g|_{T_gG}$ is nondegenerate for all $g\in G$ (in particular for $g=e$), it only remains to show that $\omega_e$ is a 2-cocycle of $\frak{g}$ with values in $\mathbb{R}$ (where $\frak{g}$ acts trivially on $\mathbb{R}$).  

First, note that for any $x,y\in \frak{g}$, $\omega(\tilde{x},\tilde{y})$ is a constant function on $G$.  Indeed, for $g\in G$
\begin{align}
\nonumber
(\omega(\tilde{x},\tilde{y}))(g)&:=\omega_g(\tilde{x}_g,\tilde{y}_g)\\
\nonumber
&=\omega_g((l_g)_\ast x,(l_g)_\ast y)\\
\nonumber
&=(l_g^\ast \omega)_e(x,y)\\
\nonumber
&=\omega_e(x,y)
\end{align}
where the last equality follows from the fact that $\omega$ is left-invariant.   This fact along with the fact the $\omega$ is closed implies that $\omega_e\in Z^2(\frak{g};\mathbb{R})$:
\begin{align}
\nonumber
0&=d\omega(\tilde{x},\tilde{y},\tilde{z})\\
\nonumber
&=\tilde{x}(\omega(\tilde{y},\tilde{z}))-\tilde{y}(\omega(\tilde{x},\tilde{z}))+\tilde{z}(\omega(\tilde{x},\tilde{y}))\\
\nonumber
&-\omega([\tilde{x},\tilde{y}],\tilde{z})+\omega([\tilde{x},\tilde{z}],\tilde{y})-\omega([\tilde{y},\tilde{z}],\tilde{x})\\
\nonumber
&=-\omega([\tilde{x},\tilde{y}],\tilde{z})-\omega([\tilde{z},\tilde{x}],\tilde{y})-\omega([\tilde{y},\tilde{z}],\tilde{x}).
\end{align}
Evaluating the last equality at $e\in G$ and multiplying by $-1$ gives the 2-cocycle condition on $\omega_e$:
\begin{equation}
\nonumber
\omega_e([{x},{y}],{z})+\omega_e([{z},{x}],{y})+\omega_e([{y},{z}],{x})=0.
\end{equation}
Hence, $(\frak{g},\omega_e)$ is a quasi-Frobenius Lie algebra.
\end{proof}

\begin{proposition}
\label{SLG3}
Let $G$ be a Lie group whose Lie algebra $\frak{g}$ carries the structure of a quasi-Frobenius Lie algebra with 2-cocycle $\beta$.  Define $\widetilde{\beta}\in \Omega^2(G)$ by
\begin{equation}
\nonumber
\widetilde{\beta}_g:=(l_{g^{-1}})^\ast \beta\in \wedge^2 T^\ast_g G,\hspace*{0.2in}\forall ~g\in G
\end{equation}
where  $l_g: G\rightarrow G$ is left translation by $g$.  Then $(G,\widetilde{\beta})$ is a symplectic Lie group whose associated quasi-Frobenius Lie algebra is $(\frak{g},\widetilde{\beta}_e)=(\frak{g},\beta)$.
\end{proposition}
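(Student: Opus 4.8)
The plan is to check directly that $\widetilde{\beta}$ is a left-invariant symplectic form on $G$ and then to invoke Proposition \ref{SLG2} to identify the associated quasi-Frobenius Lie algebra. Concretely there are three things to verify about $\widetilde{\beta}$: that it is a smooth left-invariant $2$-form, that it is nondegenerate at every point, and that it is closed.

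First I would note that the prescription $\widetilde{\beta}_g=(l_{g^{-1}})^\ast\beta$ is nothing but the left-invariant extension of $\beta\in\wedge^2\frak{g}^\ast$: explicitly $\widetilde{\beta}_g(X,Y)=\beta\bigl((l_{g^{-1}})_\ast X,(l_{g^{-1}})_\ast Y\bigr)$ for $X,Y\in T_gG$. Smoothness is then automatic (left-invariant tensor fields are smooth), and left-invariance $l_h^\ast\widetilde{\beta}=\widetilde{\beta}$ follows from the identity $l_{(hg)^{-1}}\circ l_h=l_{g^{-1}}$ after unwinding the pullback; in particular, evaluating at $g=e$ gives $\widetilde{\beta}_e=\beta$. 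Nondegeneracy at an arbitrary $g$ is immediate from this description, since $(l_{g^{-1}})_\ast\colon T_gG\to T_eG=\frak{g}$ is a linear isomorphism and $\widetilde{\beta}_g$ is the pullback under it of the nondegenerate form $\beta$. Hence $\widetilde{\beta}$ is an almost symplectic left-invariant form on $G$.

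The main step, and the only place the $2$-cocycle hypothesis is used, is showing $d\widetilde{\beta}=0$. Since $d\widetilde{\beta}$ is tensorial and the left-invariant vector fields span $T_gG$ at every $g$, it suffices to evaluate $d\widetilde{\beta}$ on triples $\tilde{x},\tilde{y},\tilde{z}$ of left-invariant vector fields with $x,y,z\in\frak{g}$. Left-invariance of $\widetilde{\beta}$ makes each function $\widetilde{\beta}(\tilde{x},\tilde{y})$ constant, equal to $\beta(x,y)$ --- this is precisely the computation in the proof of Proposition \ref{SLG2} read backwards --- so every derivative term in the Koszul formula for $d\widetilde{\beta}(\tilde{x},\tilde{y},\tilde{z})$ drops out, leaving
\[
d\widetilde{\beta}(\tilde{x},\tilde{y},\tilde{z})=-\widetilde{\beta}([\tilde{x},\tilde{y}],\tilde{z})+\widetilde{\beta}([\tilde{x},\tilde{z}],\tilde{y})-\widetilde{\beta}([\tilde{y},\tilde{z}],\tilde{x})=-\bigl(\beta([x,y],z)+\beta([z,x],y)+\beta([y,z],x)\bigr),
\]
which vanishes exactly because $\beta\in Z^2(\frak{g};\mathbb{R})$. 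Therefore $\widetilde{\beta}$ is closed, $(G,\widetilde{\beta})$ is a symplectic Lie group, and by Proposition \ref{SLG2} its associated quasi-Frobenius Lie algebra is $(\frak{g},\widetilde{\beta}_e)=(\frak{g},\beta)$. I expect no genuine obstacle here: the content is entirely the reversal of the argument for Proposition \ref{SLG2}, and the only point requiring care is matching the left-translation and pullback conventions so that the constant-function claim and the signs in the Koszul formula come out correctly.
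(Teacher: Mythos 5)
Your proposal is correct and follows essentially the same route as the paper's proof: left-invariance and nondegeneracy are read off from the definition, and closedness is obtained by evaluating $d\widetilde{\beta}$ on left-invariant vector fields, using the constancy of $\widetilde{\beta}(\tilde{x},\tilde{y})$ and the $2$-cocycle condition on $\beta$. The extra details you supply (smoothness, the identity $l_{(hg)^{-1}}\circ l_h=l_{g^{-1}}$) are fine but do not change the argument.
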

\begin{proof}
It follows immediately from the definition that $\widetilde{\beta}$ is left-invariant, that is, $(l_g)^\ast \widetilde{\beta}=\widetilde{\beta}$ for all $g\in G$.  Moreover, since $\beta$ is nondegenerate, $\widetilde{\beta}$ must be nondegenerate as well. To see that $d\widetilde{\beta}=0$, it suffices to show that $d\widetilde{\beta}(\tilde{x},\tilde{y},\tilde{z})=0$ for all left-invariant vector fields $\tilde{x}$, $\tilde{y}$, and $\tilde{z}$.  Since $\widetilde{\beta}$ is left-invariant, it follows that $\widetilde{\beta}(\tilde{x},\tilde{y})=\widetilde{\beta}_e(x,y)=\beta(x,y)$ is a constant function on $G$ for all  left-invariant vector fields $\tilde{x}$ and $\tilde{y}$, where $\tilde{x}_e=x$ and $\tilde{y}_e=y$.  In particular,
\begin{equation}
\nonumber
\widetilde{\beta}([\tilde{x},\tilde{y}],\tilde{z})=\beta([x,y],z).
\end{equation}
The proof of Proposition \ref{SLG2} shows that if $\widetilde{\beta}$ is left-invariant, we have
\begin{align}
\nonumber
d\widetilde{\beta}(\tilde{x},\tilde{y},\tilde{z})&=-\widetilde{\beta}([\tilde{x},\tilde{y}],\tilde{z})-\widetilde{\beta}([\tilde{z},\tilde{x}],\tilde{y})-\widetilde{\beta}([\tilde{y},\tilde{z}],\tilde{x})\\
\nonumber
&=-\beta([{x},{y}],{z})-\beta([{z},{x}],{y})-\beta([{y},{z}],{x}).
\end{align}
Since $\beta\in Z^2(\frak{g};\mathbb{R})$, the last equality must be zero.  Hence, $(G,\widetilde{\beta})$ is a symplectic Lie group.
\end{proof}
\begin{definition}
\label{SLG4}
Let $(G,\omega)$ and $(H,\sigma)$ be symplectic Lie groups.  A homomorphism of symplectic Lie groups is a Lie group homomorphism $\varphi: G\rightarrow H$ such that $\varphi^\ast\sigma=\omega$.  
\end{definition}

\begin{lemma}
\label{SLG5}
Let $(G,\omega)$ and $(H,\sigma)$ be symplectic Lie groups and let $\varphi: G{\rightarrow} H$ be a Lie group homomorphism.  Then $\varphi^\ast\sigma =\omega$ iff $(\varphi^\ast\sigma)_e=\omega_e$.
\end{lemma}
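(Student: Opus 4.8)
The plan is to show that the pullback form $\varphi^\ast\sigma$ and the left-invariant form $\omega$ on $G$ agree by reducing everything to their common value at the identity, exploiting the fact that $\varphi$ is a group homomorphism and hence intertwines left translations. First I would record the key commutation relation: for any $g\in G$, the homomorphism property $\varphi(gh)=\varphi(g)\varphi(h)$ says precisely that $\varphi\circ l_g = l_{\varphi(g)}\circ\varphi$ as maps $G\to H$, where on the left $l_g:G\to G$ and on the right $l_{\varphi(g)}:H\to H$. Taking differentials at an arbitrary point gives $(\varphi)_\ast\circ(l_g)_\ast = (l_{\varphi(g)})_\ast\circ(\varphi)_\ast$ on tangent spaces.

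The forward direction is trivial: if $\varphi^\ast\sigma=\omega$ as forms on all of $G$, then in particular their values at $e$ coincide, so $(\varphi^\ast\sigma)_e=\omega_e$. For the converse, assume $(\varphi^\ast\sigma)_e=\omega_e$; I want $(\varphi^\ast\sigma)_g=\omega_g$ for every $g\in G$. Since $\omega$ is left-invariant, $\omega_g=(l_{g^{-1}})^\ast\omega_e$ (pulling the identity-fiber value back along $l_{g^{-1}}:G\to G$, which sends $g\mapsto e$). So it suffices to show $\varphi^\ast\sigma$ is also left-invariant, i.e.\ $(l_g)^\ast(\varphi^\ast\sigma)=\varphi^\ast\sigma$ for all $g$; then the same computation gives $(\varphi^\ast\sigma)_g=(l_{g^{-1}})^\ast(\varphi^\ast\sigma)_e=(l_{g^{-1}})^\ast\omega_e=\omega_g$, finishing the proof. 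The left-invariance of $\varphi^\ast\sigma$ is where the homomorphism property enters: using functoriality of pullback and the relation $\varphi\circ l_g=l_{\varphi(g)}\circ\varphi$,
\begin{equation}
\nonumber
(l_g)^\ast(\varphi^\ast\sigma)=(\varphi\circ l_g)^\ast\sigma=(l_{\varphi(g)}\circ\varphi)^\ast\sigma=\varphi^\ast\big((l_{\varphi(g)})^\ast\sigma\big)=\varphi^\ast\sigma,
\end{equation}
where the last step uses that $\sigma$ is left-invariant on $H$.

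There is really no substantial obstacle here; the only thing to be careful about is bookkeeping — tracking which $l$ lives on which group and getting the direction of the pullbacks right (in particular using $l_{g^{-1}}$ rather than $l_g$ when transporting the value at $e$ out to the value at $g$). One could alternatively argue pointwise: for $v,w\in T_gG$ write $v=(l_g)_\ast v_0$, $w=(l_g)_\ast w_0$ with $v_0,w_0\in\frak{g}$, then
\begin{equation}
\nonumber
(\varphi^\ast\sigma)_g(v,w)=\sigma_{\varphi(g)}\big((\varphi)_\ast(l_g)_\ast v_0,(\varphi)_\ast(l_g)_\ast w_0\big)=\sigma_{\varphi(g)}\big((l_{\varphi(g)})_\ast(\varphi)_\ast v_0,(l_{\varphi(g)})_\ast(\varphi)_\ast w_0\big)=\sigma_e\big((\varphi)_\ast v_0,(\varphi)_\ast w_0\big)=(\varphi^\ast\sigma)_e(v_0,w_0),
\end{equation}
using left-invariance of $\sigma$, and likewise $\omega_g(v,w)=\omega_e(v_0,w_0)$ by left-invariance of $\omega$; the hypothesis $(\varphi^\ast\sigma)_e=\omega_e$ then gives equality. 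I would present whichever of these two formulations is shorter in context — probably the first, since left-invariance of a pullback along a homomorphism is a clean one-line statement worth isolating.
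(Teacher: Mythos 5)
Your proposal is correct and follows essentially the same route as the paper: the paper's proof is exactly your pointwise computation, using the intertwining relation $\varphi\circ l_{g^{-1}}=l_{\varphi(g^{-1})}\circ\varphi$ together with left-invariance of $\sigma$ and $\omega$ to transport the identity $(\varphi^\ast\sigma)_e=\omega_e$ to an arbitrary point $g$. Your first formulation (isolating left-invariance of $\varphi^\ast\sigma$) is just a repackaging of the same argument, so there is no substantive difference.
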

\begin{proof}
$(\Rightarrow)$ Suppose $(\varphi^\ast\sigma)=\omega$.  By definition, $(\varphi^\ast\sigma)_g=\omega_g$ for all $g\in G$.  In particular, the equality holds for $g=e$.

$(\Leftarrow)$ Now suppose $(\varphi^\ast\sigma)_e=\omega_e$.  Let $g\in G$ and $x,y\in T_g G$.  Then
\begin{align}
\nonumber
(\varphi^\ast\sigma)_g(x,y)&=\sigma_{\varphi(g)}(\varphi_{\ast,g}(x),\varphi_{\ast,g}(y))\\
\nonumber
&=[(l_{\varphi(g^{-1})})^\ast \sigma_e](\varphi_{\ast,g}(x),\varphi_{\ast,g}(y))\\
\nonumber
&=\sigma_e((l_{\varphi(g^{-1})}\circ\varphi)_{\ast,g}(x),(l_{\varphi(g^{-1})}\circ\varphi)_{\ast,g}(y))\\
\nonumber
&=\sigma_e((\varphi\circ l_{g^{-1}})_{\ast,g}(x),(\varphi\circ l_{g^{-1}})_{\ast,g}(y))\\
\nonumber
&=(\varphi^\ast \sigma)_e(( l_{g^{-1}})_{\ast,g}(x),( l_{g^{-1}})_{\ast,g}(y))\\
\nonumber
&=\omega_e(( l_{g^{-1}})_{\ast,g}(x),( l_{g^{-1}})_{\ast,g}(y))\\
\nonumber
&=[(l_{g^{-1}})^\ast\omega_e](x,y)\\
\nonumber
&=\omega_g(x,y),
\end{align}
where the second and last equalities follow from the left-invariance of $\sigma$ and $\omega$ respectively and the fourth equality follows from the fact that $\varphi$ is a group homomorphism.  This completes the proof.
\end{proof}

\begin{proposition}
\label{SLG6}
Let $\varphi: (G,\omega)\rightarrow (H,\sigma)$ be a homomorphism of symplectic Lie groups.  Then 
\begin{equation}
\nonumber
\varphi_{\ast,e}: (\frak{g},\omega_e)\rightarrow (\frak{h},\sigma_e)
\end{equation}
is a homomorphism of quasi-Frobenius Lie algebras.
\end{proposition}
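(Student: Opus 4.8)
The plan is to verify the two conditions in Definition \ref{QFLA49} separately: that $\varphi_{\ast,e}$ is a Lie algebra homomorphism $\frak{g}\to\frak{h}$, and that $(\varphi_{\ast,e})^\ast\sigma_e=\omega_e$.

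First I would establish that $\varphi_{\ast,e}$ is a morphism of Lie algebras. This is the standard fact that the differential at the identity of a Lie group homomorphism respects brackets, and the argument I would give is the usual one: since $\varphi$ is a group homomorphism we have $\varphi\circ l_g=l_{\varphi(g)}\circ\varphi$ for all $g\in G$, so for $x\in\frak{g}$ the left-invariant vector field $\tilde x$ on $G$ is $\varphi$-related to the left-invariant vector field on $H$ determined by $\varphi_{\ast,e}(x)$. Since Lie brackets of $\varphi$-related vector fields are again $\varphi$-related, evaluating at $e$ yields $\varphi_{\ast,e}([x,y]_{\frak g})=[\varphi_{\ast,e}(x),\varphi_{\ast,e}(y)]_{\frak h}$, where the brackets are the ones on $T_eG=\frak g$ and $T_eH=\frak h$ coming from left-invariant vector fields as in the proof of Proposition \ref{SLG2}.

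Next I would check the compatibility of the forms. Because $\varphi\colon(G,\omega)\to(H,\sigma)$ is a homomorphism of symplectic Lie groups, Definition \ref{SLG4} gives $\varphi^\ast\sigma=\omega$; evaluating this identity of $2$-forms at $e\in G$ and using $\varphi(e)=e$ gives, for all $u,v\in\frak g$,
\begin{equation}
\nonumber
\omega_e(u,v)=(\varphi^\ast\sigma)_e(u,v)=\sigma_{\varphi(e)}(\varphi_{\ast,e}(u),\varphi_{\ast,e}(v))=\sigma_e(\varphi_{\ast,e}(u),\varphi_{\ast,e}(v)),
\end{equation}
which is exactly the condition $(\varphi_{\ast,e})^\ast\sigma_e=\omega_e$ of Definition \ref{QFLA49}. (Alternatively one could invoke Lemma \ref{SLG5}, but only the trivial direction of it is needed here, namely that $\varphi^\ast\sigma=\omega$ forces equality of the forms at $e$.) Combining the two parts shows $\varphi_{\ast,e}\colon(\frak g,\omega_e)\to(\frak h,\sigma_e)$ is a quasi-Frobenius Lie algebra homomorphism.

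There is essentially no serious obstacle: both ingredients are standard, and the only point requiring any care is bookkeeping — making sure the Lie bracket on $\frak g$ used in Definition \ref{QFLA49} is the same one induced by left-invariant vector fields that appears in Propositions \ref{SLG2} and \ref{SLG3}, so that the classical "$d\varphi$ is a Lie algebra map" statement applies verbatim. If one wanted a fully self-contained argument avoiding the language of $\varphi$-related vector fields, one could instead work with the exponential map and the identity $\varphi\circ\exp_G=\exp_H\circ\,\varphi_{\ast,e}$ together with the Baker–Campbell–Hausdorff formula, but the $\varphi$-relatedness argument is shorter.
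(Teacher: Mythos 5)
Your proof is correct and matches the paper, whose own proof is just the one-line remark that the claim ``follows immediately from the properties of $\varphi$''; your two checks (the standard fact that $\varphi_{\ast,e}$ respects brackets via $\varphi$-related left-invariant vector fields, and evaluating $\varphi^\ast\sigma=\omega$ at $e$) are exactly the details the paper leaves implicit.
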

\begin{proof}
This follows immediately from the properties of $\varphi$.
\end{proof}

\begin{proposition}
\label{SLG6a}
Let $\psi: (\frak{g},\beta)\rightarrow (\frak{h},\sigma)$ be a homomorphism of quasi-Frobenius Lie algebras.  Let $G$ be the simply connected Lie group whose Lie algebra is $\frak{g}$ and let $H$ be any Lie group whose Lie algebra is $\frak{h}$.  Let $(G,\widetilde{\beta})$ and $(H,\widetilde{\sigma})$ be the symplectic Lie groups associated to $(\frak{g},\beta)$ and $(\frak{h},\sigma)$ respectively (see Proposition \ref{SLG3}).  Then there exists a unique symplectic Lie group homomorphism
\begin{equation}
\nonumber
\widehat{\psi}: (G,\widetilde{\beta})\rightarrow (H,\widetilde{\sigma})
\end{equation}
such that $\widehat{\psi}_{\ast,e}=\psi$.
\end{proposition}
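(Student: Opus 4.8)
The plan is to integrate $\psi$ to a Lie group homomorphism using the standard correspondence between Lie algebra homomorphisms and Lie group homomorphisms out of a simply connected group, and then to verify the symplectic compatibility by reducing it to a single condition at the identity via Lemma \ref{SLG5}.

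First, since $G$ is simply connected (hence connected) with Lie algebra $\frak{g}$, and $\psi: \frak{g}\rightarrow \frak{h}$ is a homomorphism of Lie algebras into the Lie algebra $\frak{h}$ of $H$, Lie's second theorem (the integration/monodromy theorem) yields a unique Lie group homomorphism $\widehat{\psi}: G\rightarrow H$ with $\widehat{\psi}_{\ast,e}=\psi$. This already gives the uniqueness claim: any symplectic Lie group homomorphism with differential $\psi$ at $e$ is in particular a Lie group homomorphism with that differential, hence equals $\widehat{\psi}$.

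It then remains to check that $\widehat{\psi}$ is a homomorphism of symplectic Lie groups, i.e.\ that $\widehat{\psi}^\ast\widetilde{\sigma}=\widetilde{\beta}$. By Lemma \ref{SLG5} applied to $(G,\widetilde{\beta})$ and $(H,\widetilde{\sigma})$, it suffices to verify $(\widehat{\psi}^\ast\widetilde{\sigma})_e=\widetilde{\beta}_e$. For $x,y\in\frak{g}=T_eG$, using $\widehat{\psi}(e)=e$ and $\widetilde{\sigma}_e=\sigma$ (Proposition \ref{SLG3}), one has $(\widehat{\psi}^\ast\widetilde{\sigma})_e(x,y)=\widetilde{\sigma}_{e}(\widehat{\psi}_{\ast,e}x,\widehat{\psi}_{\ast,e}y)=\sigma(\psi(x),\psi(y))$, while $\widetilde{\beta}_e=\beta$. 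Thus the required identity at $e$ is exactly $\sigma(\psi(x),\psi(y))=\beta(x,y)$, which holds because $\psi$ is a homomorphism of quasi-Frobenius Lie algebras, i.e.\ condition (\ref{QFLA9e1}). Hence $\widehat{\psi}^\ast\widetilde{\sigma}=\widetilde{\beta}$, completing the proof.

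The only substantive input is Lie's second theorem, and the simple-connectedness hypothesis on $G$ is precisely what makes the integrating homomorphism $\widehat{\psi}$ exist and be unique; there is no real obstacle beyond this, since the symplectic condition collapses — thanks to Lemma \ref{SLG5} — to the one-line pointwise computation above. In short, the content of the statement is simply to lift the Lie-algebra-level hypothesis on $\psi$ to the Lie-group level.
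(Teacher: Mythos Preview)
Your proof is correct and follows essentially the same route as the paper: integrate $\psi$ to a unique Lie group homomorphism via Lie's second theorem using the simple-connectedness of $G$, then invoke Lemma \ref{SLG5} to reduce $\widehat{\psi}^\ast\widetilde{\sigma}=\widetilde{\beta}$ to the identity-element check, which is exactly the quasi-Frobenius condition $\psi^\ast\sigma=\beta$. The paper's proof is line-for-line the same argument.
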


\begin{proof}
Since $G$ is simply connected, there exists a unique Lie group homomorphism $\widehat{\psi}: G\rightarrow H$ such that $\widehat{\psi}_{\ast,e}=\psi$.  It only remains to show that $\widehat{\psi}^\ast \widetilde{\sigma}=\widetilde{\beta}$.  By Lemma \ref{SLG5}, it suffices to show that $(\widehat{\psi}^\ast \widetilde{\sigma})_e=\widetilde{\beta}_e=\beta$.  To do this, let $x,y\in \frak{g}$.  Then
\begin{align}
\nonumber
(\widehat{\psi}^\ast \widetilde{\sigma})_e(x,y)&=\widetilde{\sigma}_{\widehat{\psi}(e)}(\widehat{\psi}_{\ast,e}(x),\widehat{\psi}_{\ast,e}(y))\\
\nonumber
&=\widetilde{\sigma}_e(\psi(x),\psi(y))\\
\nonumber
&=\sigma(\psi(x),\psi(y))\\
\nonumber
&=(\psi^\ast\sigma)(x,y)\\
\nonumber
&=\beta(x,y).
\end{align}
This completes the proof.
\end{proof}

\begin{theorem}
\label{SLG7}
Let $\mathbf{SCSLG}$ be the category of simply connected symplectic Lie groups and let $\mathbf{qFLA}$ be the category of finite dimensional quasi-Frobenius Lie algberas.  Let $F$ be the functor from $\mathbf{SCSLG}$ to $\mathbf{qFLA}$ which sends $(G,\omega)$ to $(\frak{g},\omega_e)$ and $\varphi: (G,\omega)\rightarrow (H,\sigma)$ to $\varphi_{\ast,e}: (\frak{g},\omega_e)\rightarrow (\frak{h},\sigma_e)$.  Then $F$ is an equivalence of categories.  
\end{theorem}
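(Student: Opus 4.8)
The plan is to show $F$ is an equivalence by verifying the three standard conditions: faithfulness, fullness, and essential surjectivity (in fact surjectivity on objects). Before that one checks $F$ is a well-defined functor: it takes objects to objects by Proposition~\ref{SLG2}, takes morphisms to morphisms by Proposition~\ref{SLG6}, and manifestly preserves identities and composition because the differential-at-$e$ operation $\varphi\mapsto\varphi_{\ast,e}$ does.

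Essential surjectivity is immediate from Proposition~\ref{SLG3}: given a finite-dimensional quasi-Frobenius Lie algebra $(\frak{q},\beta)$, let $G$ be the unique simply connected Lie group with Lie algebra $\frak{q}$; then $(G,\widetilde{\beta})$ is a simply connected symplectic Lie group and $F(G,\widetilde{\beta})=(\frak{q},\widetilde{\beta}_e)=(\frak{q},\beta)$. So $F$ is even surjective on objects.

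For faithfulness, if $\varphi_1,\varphi_2\colon(G,\omega)\to(H,\sigma)$ have $(\varphi_1)_{\ast,e}=(\varphi_2)_{\ast,e}$, then $\varphi_1=\varphi_2$ because a homomorphism out of the connected (indeed simply connected) group $G$ is determined by its differential at $e$. For fullness, the key point is that for a symplectic Lie group $(G,\omega)$ the form $\omega$ agrees with the form $\widetilde{\omega_e}$ built in Proposition~\ref{SLG3}: left-invariance of $\omega$ gives $\omega_g=(l_{g^{-1}})^\ast\omega_e=(\widetilde{\omega_e})_g$ for all $g\in G$. Hence, given a quasi-Frobenius Lie algebra homomorphism $\psi\colon(\frak{g},\omega_e)\to(\frak{h},\sigma_e)$, Proposition~\ref{SLG6a} (applicable since $G$ is simply connected) produces a symplectic Lie group homomorphism $\widehat{\psi}\colon(G,\widetilde{\omega_e})\to(H,\widetilde{\sigma_e})$, i.e.\ a morphism $(G,\omega)\to(H,\sigma)$, with $\widehat{\psi}_{\ast,e}=\psi$; thus $F(\widehat{\psi})=\psi$.

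The argument is essentially a bookkeeping exercise stringing together Lie's correspondence between simply connected Lie groups and Lie algebras with the propositions already proved, so I do not expect a serious obstacle; the one place that deserves care is the identification $\omega=\widetilde{\omega_e}$, which is exactly what lets Proposition~\ref{SLG6a} be invoked without reproving anything. Alternatively one could package this by constructing an explicit quasi-inverse $(\frak{q},\beta)\mapsto(G_{\frak{q}},\widetilde{\beta})$ and checking the unit and counit, but the full--faithful--essentially-surjective route is shorter.
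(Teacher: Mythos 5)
Your proposal is correct and follows essentially the same route as the paper, which simply cites the Lie group--Lie algebra correspondence together with Propositions \ref{SLG2}, \ref{SLG3}, \ref{SLG6}, and \ref{SLG6a}; your write-up just makes the full/faithful/essentially-surjective bookkeeping (including the identification $\omega=\widetilde{\omega_e}$ needed to invoke Proposition \ref{SLG6a}) explicit.
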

\begin{proof}
Theorem \ref{SLG7} follows from the well known correspondence between simply connected Lie groups and finite dimensional Lie algebras combined with Proposition \ref{SLG2}, Proposition \ref{SLG3}, Proposition \ref{SLG6}, and Proposition \ref{SLG6a}.
\end{proof}

\noindent As an example, we now recall the symplectic Lie group structure on the affine Lie group $A(n,\mathbb{R})$ (c.f., \cite{Aga, Mik, Ooms1}).
\begin{example}
\label{SLG8}
Recall that $A(n,\mathbb{R})$ is the Lie group consisting of $(n+1)\times (n+1)$ matrices of the form
\begin{equation}
\nonumber
A(n,\mathbb{R})=\left\{\left(\begin{array}{ll}
A & v\\
0 & 1
\end{array}
\right)\big| ~A\in GL(n,\mathbb{R}),~v\in \mathbb{R}^n
\right\}.
\end{equation}
The associated Lie algebra is then
\begin{equation}
\nonumber
\frak{a}(n,\mathbb{R})=\left\{\left(\begin{array}{ll}
A & v\\
0 & 0
\end{array}
\right)\big| ~A\in \frak{gl}(n,\mathbb{R}),~v\in \mathbb{R}^n
\right\}.
\end{equation}
From the definition of $A(n,\mathbb{R})$, we see that $A(n,\mathbb{R})$ is even dimensional with $\dim ~A(n,\mathbb{R})=\dim \frak{a}(n,\mathbb{R})=n^2+n=n(n+1)$.  Let $E_{ij}$ denote the $(n+1)\times (n+1)$ matrix with $1$ in the $(i,j)$-component and all other components zero.  Then $\{E_{ij}\}_{1\le i\le n,~1\le j\le n+1}$ is a basis on $\frak{a}(n,\mathbb{R})$.  Let $\{E_{ij}^\ast\}_{1\le i\le n,~1\le j\le n+1}$ denote the corresponding dual basis.  Define
\begin{equation}
\nonumber
\alpha=E_{12}^\ast+E_{23}^\ast+\cdots + E_{n,n+1}^\ast
\end{equation}
and $\beta(X,Y):=-\delta\alpha(X,Y)=\alpha([X,Y])$ for all $X,Y\in \frak{a}(n,\mathbb{R})$.  Since
\begin{equation}
\nonumber
[E_{ij},E_{kl}]=\delta_{jk} E_{il}-\delta_{li} E_{kj},
\end{equation}
we see that 
\begin{equation}
\label{SLG8e1}
\beta(E_{ij},E_{kl})=\delta_{jk}\delta_{l,i+1}-\delta_{li}\delta_{j,k+1}.
\end{equation}
Careful consideration of (\ref{SLG8e1}) shows that $\beta:=-\delta\alpha\in Z^2(\frak{a}(n,\mathbb{R});\mathbb{R})$ is nondegenerate.  Hence, $(\frak{a}(n,\mathbb{R}),\alpha)$ is a Frobenius Lie algebra. (In particular, $(\frak{a}(n,\mathbb{R}),\beta)$ is a quasi-Frobenius Lie algebra.)  Let $\tilde{\beta}\in \Omega^2(A(n,\mathbb{R}))$ be the left-invariant 2-form on $A(n,\mathbb{R})$ associated to $\beta$.  Then $(A(n,\mathbb{R}),\widetilde{\beta})$ is a symplectic Lie group.  Furthermore, since $\beta:=-\delta\alpha$, it follows that $\tilde{\beta}$ is exact.  Specifically, 
\begin{equation}
\nonumber
\widetilde{\beta}=-d\tilde{\alpha}
\end{equation}
where $\tilde{\alpha}\in \Omega^1(A(n,\mathbb{R}))$ is the left-invariant 1-form on $A(n,\mathbb{R})$ associated to $\alpha$.
\end{example}

\subsection{Lie bialgebras $\&$ the Drinfeld Double}

\begin{definition}
\label{SSD1}
A \textit{Lie bialgebra} over a field $k$ is a pair $(\frak{g},\gamma)$ where $\frak{g}$ is a Lie algebra over $k$ and $\gamma:\frak{g}\rightarrow  \frak{g}\wedge \frak{g}\subset \frak{g}\otimes \frak{g}$ is a skew-symmetric linear map such that
\begin{itemize}
\item[1.] $\gamma^\ast: \frak{g}^\ast\otimes \frak{g}^\ast\rightarrow \frak{g}^\ast$ is a Lie bracket on $\frak{g}^\ast$, where the dual map $\gamma^\ast$ is resrticted to $\frak{g}^\ast\otimes \frak{g}^\ast\subset (\frak{g}\otimes \frak{g})^\ast$;
\item[2.] $\gamma$ is a 1-cocycle on $\frak{g}$ with values in $\frak{g}\otimes \frak{g}$, where $\frak{g}$ acts on $\frak{g}\otimes \frak{g}$ via the adjoint action.
\end{itemize}
$\gamma$ is called the \textit{cobracket} or \textit{co-commutator}.
\end{definition}
\noindent Condition 2 in Definition \ref{SSD1} is equivalent to the condition 
\begin{equation}
\nonumber
\gamma([x,y])=ad_x^{(2)}\gamma(y)-ad_y^{(2)}\gamma(x),~\forall~x,y\in\frak{g}
\end{equation}
where the linear map $ad_x^{(2)}: \frak{g}\otimes \frak{g}\rightarrow \frak{g}\otimes \frak{g}$ is the adjoint action of $x\in \frak{g}$ on $\frak{g}\otimes \frak{g}$.  Explicitly, $ad_x^{(2)}$ is defined via
\begin{equation}
\nonumber
ad_x^{(2)}(y\otimes z)=ad_x(y)\otimes z+y\otimes ad_x(z)=[x,y]\otimes z+y\otimes [x,z]
\end{equation}
for $y,z\in \frak{g}$.

\begin{definition}
\label{SSD2}
Let $(\frak{g},\gamma_{\frak{g}})$ and $(\frak{h},\gamma_{\frak{h}})$ be Lie bialgebras.  A \textit{Lie bialgebra homomorphism} from $(\frak{g},\gamma_{\frak{g}})$ to $(\frak{h},\gamma_{\frak{h}})$ is a Lie algebra map $\varphi: \frak{g}\rightarrow \frak{h}$ such that
\begin{equation}
\nonumber
(\varphi\otimes \varphi)\circ \gamma_{\frak{g}}=\gamma_{\frak{h}}\circ \varphi.
\end{equation}
\end{definition}

\begin{example}
\label{SSD3}
Any Lie algebra $\frak{g}$ can be turned into a Lie bialgebra by taking the cobracket $\gamma\equiv 0$.  $(\frak{g},0)$ is the \textit{trivial} Lie bialgebra structure on $\frak{g}$.  
\end{example}

\noindent The next result shows that the notion of a Lie bialgebra is self-dual for the finite dimensional case.
\begin{proposition}
\label{SSD5}
Let $(\frak{g},\gamma_{\frak{g}})$ be a finite dimensional Lie bialgebra and let $\gamma_{\frak{g}^\ast}: \frak{g}^\ast\rightarrow \frak{g}^\ast\otimes \frak{g}^\ast$ be the dual of the Lie bracket on $\frak{g}$.  Then $(\frak{g}^\ast,\gamma_{\frak{g}^\ast})$ is a Lie bialgebra, where the Lie bracket on $\frak{g}^\ast$ is given by the dual of $\gamma_{\frak{g}}$.     
\end{proposition}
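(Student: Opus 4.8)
The plan is to verify the two axioms of Definition \ref{SSD1} for the pair $(\frak{g}^\ast,\gamma_{\frak{g}^\ast})$, using throughout the finite-dimensional identifications $\frak{g}^{\ast\ast}\cong\frak{g}$, $(\frak{g}\otimes\frak{g})^\ast\cong\frak{g}^\ast\otimes\frak{g}^\ast$ and $(\frak{g}^\ast\otimes\frak{g}^\ast)^\ast\cong\frak{g}\otimes\frak{g}$, under which the double dual of a linear map is the map itself. Write $[\,\cdot\,,\cdot\,]$ for the Lie bracket of $\frak{g}$; by construction the bracket of $\frak{g}^\ast$ is $\gamma_{\frak{g}}^\ast$ and the candidate cobracket is $\gamma_{\frak{g}^\ast}=[\,\cdot\,,\cdot\,]^\ast$. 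Two of the required facts are immediate: $\gamma_{\frak{g}}^\ast$ is a Lie bracket on $\frak{g}^\ast$ \emph{by} Condition~1 of Definition \ref{SSD1} applied to $(\frak{g},\gamma_{\frak{g}})$; and $\gamma_{\frak{g}^\ast}$ is skew-symmetric — hence takes values in $\frak{g}^\ast\wedge\frak{g}^\ast$ — because $[\,\cdot\,,\cdot\,]$ is.

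Next I would dispatch Condition~1 for $(\frak{g}^\ast,\gamma_{\frak{g}^\ast})$: it asks that $\gamma_{\frak{g}^\ast}^\ast=\bigl([\,\cdot\,,\cdot\,]^\ast\bigr)^\ast$ be a Lie bracket on $\frak{g}^{\ast\ast}$, and under $\frak{g}^{\ast\ast}\cong\frak{g}$ this map is precisely $[\,\cdot\,,\cdot\,]$, which is a Lie bracket by hypothesis. So the only substantive point is Condition~2: that $\gamma_{\frak{g}^\ast}$ be a $1$-cocycle on $\frak{g}^\ast$ with values in $\frak{g}^\ast\otimes\frak{g}^\ast$ for the adjoint action, i.e.
\begin{equation}
\nonumber
\gamma_{\frak{g}^\ast}([\xi,\eta]_{\frak{g}^\ast})=ad^{(2)}_{\xi}\gamma_{\frak{g}^\ast}(\eta)-ad^{(2)}_{\eta}\gamma_{\frak{g}^\ast}(\xi),\qquad\forall~\xi,\eta\in\frak{g}^\ast,
\end{equation}
where now $ad$ refers to the adjoint action of $\frak{g}^\ast$ (with bracket $\gamma_{\frak{g}}^\ast$) on itself.

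To prove this identity I would pair both sides against an arbitrary decomposable tensor $x\otimes y\in\frak{g}\otimes\frak{g}$ and reduce everything to the structure of $\frak{g}$. On the left, the definitions of $\gamma_{\frak{g}^\ast}$ and of $[\,\cdot\,,\cdot\,]_{\frak{g}^\ast}=\gamma_{\frak{g}}^\ast$ give $\langle\xi\otimes\eta,\gamma_{\frak{g}}([x,y])\rangle$, and the cocycle condition for $\gamma_{\frak{g}}$ (the displayed reformulation of Condition~2 of Definition \ref{SSD1}) rewrites $\gamma_{\frak{g}}([x,y])$ as $ad^{(2)}_x\gamma_{\frak{g}}(y)-ad^{(2)}_y\gamma_{\frak{g}}(x)$. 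On the right, one unwinds the adjoint action of $\xi\in\frak{g}^\ast$ — which is assembled from $\gamma_{\frak{g}}^\ast$, hence from $\gamma_{\frak{g}}$ — together with $\gamma_{\frak{g}^\ast}=[\,\cdot\,,\cdot\,]^\ast$; introducing Sweedler-type notation $\gamma_{\frak{g}}(y)=\sum y_{(1)}\otimes y_{(2)}$ and $\gamma_{\frak{g}^\ast}(\xi)=\sum\xi_{(1)}\otimes\xi_{(2)}$, both sides collapse to the same sum of scalars built out of the pairing, the bracket of $\frak{g}$, and $\gamma_{\frak{g}}$, and matching them term by term finishes the verification.

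I expect this last bookkeeping — keeping straight the four (co)adjoint actions and the signs produced by skew-symmetry — to be the only real obstacle; everything else is finite-dimensional linear algebra together with the Jacobi identity of $\frak{g}$. As a conceptual check, the same statement also follows from the symmetric description of a finite-dimensional Lie bialgebra as a Manin triple $(\frak{g}\oplus\frak{g}^\ast,\frak{g},\frak{g}^\ast)$, which is manifestly invariant under $\frak{g}\leftrightarrow\frak{g}^\ast$; but the direct dualization above keeps the proof self-contained.
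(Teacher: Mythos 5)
The paper itself offers no proof of Proposition \ref{SSD5} — it is stated as a known fact (self-duality of the Lie bialgebra axioms in finite dimension, cf.\ \cite{CP}) — so there is nothing internal to compare against; judged on its own, your proposal is correct and is the standard direct argument. Your reduction is organized properly: Condition~1 for $(\frak{g}^\ast,\gamma_{\frak{g}^\ast})$ is exactly the Jacobi identity of $\frak{g}$ under $\frak{g}^{\ast\ast}\cong\frak{g}$, and the only substantive point is the $1$-cocycle condition for $\gamma_{\frak{g}^\ast}=[\cdot,\cdot]^\ast$ with respect to the adjoint action of $(\frak{g}^\ast,\gamma_{\frak{g}}^\ast)$. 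The "bookkeeping" you defer does close, and it is worth recording how: pairing the desired identity against $x\otimes y$ and writing $\gamma_{\frak{g}}(x)=\sum x_{(1)}\otimes x_{(2)}$, the right-hand side becomes $\sum \xi(x_{(1)})\eta([x_{(2)},y])+\xi(y_{(1)})\eta([x,y_{(2)}])-\eta(x_{(1)})\xi([x_{(2)},y])-\eta(y_{(1)})\xi([x,y_{(2)}])$, while the left-hand side, after invoking the cocycle identity for $\gamma_{\frak{g}}$ at $[x,y]$ and pairing with $\xi\otimes\eta$, gives four analogous terms; two pairs match on the nose (after flipping a bracket sign), and the remaining two match only after using the skew-symmetry $\sum x_{(1)}\otimes x_{(2)}=-\sum x_{(2)}\otimes x_{(1)}$ of the cobracket, which is therefore not a cosmetic hypothesis but the ingredient that makes the two sides agree. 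Note also that the Jacobi identity of $\frak{g}$ is used only in Condition~1, not in this computation. Your closing remark about the Manin triple $(\frak{g}\oplus\frak{g}^\ast,\frak{g},\frak{g}^\ast)$ is the argument most references actually give, and since the paper later states the bijection between finite dimensional Lie bialgebras and Manin triples, either route would be acceptable here; the direct dualization has the advantage of not presupposing the double.
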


\noindent For a Lie algebra $\frak{g}$, the simplest way to obtain an element of $Z^1_{ad}(\frak{g};\frak{g}\otimes \frak{g})$ is to turn to the 0-cochains and take their coboundaries.  This raises the following natural question: given $r\in \frak{g}\otimes \frak{g}$, when does $\delta r\in  Z^1_{ad}(\frak{g};\frak{g}\otimes \frak{g})$ define a Lie bialgebra structure on $\frak{g}$?  To answer this question, let
\begin{equation}
\nonumber
r=\sum_i a_i\otimes b_i,
\end{equation}
and define
\begin{equation}
\label{SSD6}
[[r,r]]:=[r_{12},r_{13}]+[r_{12},r_{23}]+[r_{13},r_{23}],
\end{equation}
where
\begin{align}
\label{SSD7}
[r_{12},r_{13}]&:=\sum_{i,j}[a_i,a_j]\otimes b_i\otimes b_j\\
\label{SSD8}
[r_{12},r_{23}]&:=\sum_{i,j}a_i\otimes [b_i,a_j]\otimes b_j\\
\label{SSD9}
[r_{13},r_{23}]&:=\sum_{i,j}=a_i\otimes a_j\otimes [b_i,b_j].
\end{align}

\begin{definition}
\label{SSD10}
A \textit{coboundary Lie bialgebra} is a Lie bialgebra $(\frak{g},\gamma)$ such that $\gamma=\delta r$ for some $r\in \frak{g}\otimes \frak{g}$.  The element $r$ is called the \textit{$r$-matrix}.
\end{definition}

\noindent The next result provides a necessary and sufficient condition for an element $r\in \frak{g}\otimes \frak{g}$ to define a Lie bialgebra structure on $\frak{g}$.

\begin{proposition}
\label{SSD11}
Let $\frak{g}$ be a Lie algebra.  Then $(\frak{g},\delta r)$ is a Lie bialgebra iff 
\begin{itemize}
\item[(i)] $r+\sigma(r)$ is invariant under the adjoint action of $\frak{g}$ on $\frak{g}\otimes \frak{g}$, where $\sigma: \frak{g}\otimes \frak{g}\rightarrow \frak{g}\otimes \frak{g}$ is the unique linear map defined by $x\otimes y\mapsto y\otimes x$ for $x,y\in \frak{g}$;
\item[(ii)] $[[r,r]]$ is invariant under the adjoint action of $\frak{g}$ on $\frak{g}\otimes \frak{g}\otimes \frak{g}$.
\end{itemize}
\end{proposition}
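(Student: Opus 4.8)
The plan is to unwind both sides of the claimed equivalence directly from the definition of a Lie bialgebra (Definition~\ref{SSD1}) specialized to $\gamma = \delta r$, and to match the two stated conditions against the two axioms ``$\gamma^\ast$ is a Lie bracket'' and ``$\gamma$ is a $1$-cocycle.'' First I would observe that axiom~2 is free: $\gamma = \delta r$ is a coboundary, hence automatically a $1$-cocycle in $Z^1_{ad}(\frak{g};\frak{g}\otimes\frak{g})$, so no condition is needed from that side. Thus the entire content of the proposition lies in axiom~1, together with the implicit requirement built into Definition~\ref{SSD1} that $\gamma$ actually land in $\frak{g}\wedge\frak{g}$, i.e.\ that $\delta r$ be skew-symmetric.

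Next I would handle the skew-symmetry requirement. Since $\delta$ commutes with the flip $\sigma$ (the adjoint action is by $ad_x^{(2)}$, which is $\sigma$-equivariant), we have $\sigma(\delta r) = \delta(\sigma(r))$, so $\delta r$ is skew-symmetric iff $\delta(r + \sigma(r)) = 0$, i.e.\ iff $r + \sigma(r)$ is $ad$-invariant. This is exactly condition~(i). (Note that only the symmetric part of $r$ is constrained; the skew part is unrestricted, which is why one often normalizes $r$ to be skew.) So condition~(i) $\Leftrightarrow$ $\delta r$ is a well-defined cobracket valued in $\frak{g}\wedge\frak{g}$.

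For axiom~1, I would compute the co-Jacobi identity for $\gamma = \delta r$. Writing $\gamma^\ast$ as a bracket $[\,\cdot\,,\cdot\,]_\ast$ on $\frak{g}^\ast$, skew-symmetry of $\gamma$ already gives skew-symmetry of $[\,\cdot\,,\cdot\,]_\ast$, so the only remaining condition is the Jacobi identity for $[\,\cdot\,,\cdot\,]_\ast$, which dualizes to the statement $\mathrm{Alt}(\gamma\otimes\mathrm{id})\circ\gamma = 0$ (the co-Jacobi identity). The key computation is to expand $\mathrm{Alt}\big((\delta r\otimes\mathrm{id})\circ\delta r\big)$ and show it equals $\mathrm{Alt}\big(ad^{(3)}\,[[r,r]]\big)$, up to a nonzero scalar — that is, the co-Jacobiator of $\delta r$ is, after alternation, precisely the failure of $[[r,r]]$ to be $ad$-invariant. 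Granting this identity, co-Jacobi holds iff $[[r,r]]$ is $ad$-invariant, which is condition~(ii), completing the equivalence. The main obstacle is this bookkeeping step: one must carefully track the three tensor legs, use the $1$-cocycle identity $\delta r([x,y]) = ad_x^{(2)}\delta r(y) - ad_y^{(2)}\delta r(x)$ to reorganize terms, and invoke condition~(i) partway through to symmetrize $r$ where needed. I expect the cleanest route is to verify the identity on $\frak{g}^\ast$ directly: pair everything against $\xi_1\otimes\xi_2\otimes\xi_3 \in (\frak{g}^\ast)^{\otimes 3}$, recognize $\langle \delta r, \xi_i\otimes\xi_j\rangle$ as the structure constants of $[\,\cdot\,,\cdot\,]_\ast$, and read off that the alternating sum of double brackets coincides with the pairing of $[[r,r]]$ against the $ad$-coboundary — a classical but genuinely intricate identity. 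I would relegate the explicit index chase to a lemma or to an appeal to the standard references on classical $r$-matrices (e.g.\ \cite{CP, ES, KS}).
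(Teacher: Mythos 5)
The paper offers no argument of its own here: its ``proof'' is the single line ``See pp.~51--54 of \cite{CP},'' so your sketch is in effect a summary of the argument in that reference rather than an alternative to anything in the paper. Your reductions are correct and are the standard route: the $1$-cocycle axiom of Definition~\ref{SSD1} is automatic because $\delta r$ is a coboundary; since $ad_x^{(2)}$ commutes with the flip $\sigma$, one has $\sigma\circ(\delta r)=\delta(\sigma(r))$, so $\delta r$ takes values in $\frak{g}\wedge\frak{g}$ exactly when $r+\sigma(r)$ is $ad$-invariant, which is (i); and, given skewness, the only remaining axiom is co-Jacobi for $\delta r$ (equivalently Jacobi for the dual bracket), which the classical identity for the co-Jacobiator of a coboundary converts into (ii). What you have not done is prove that identity---you explicitly delegate it to a lemma or to \cite{CP}---so your write-up is no more self-contained than the paper's citation, though it is considerably more informative about where conditions (i) and (ii) enter. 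One caution if you do carry out the deferred computation: because the Jacobiator of a skew bracket is totally antisymmetric, the co-Jacobiator of $\delta r$ is automatically a totally antisymmetric tensor, so an identity of the form ``co-Jacobiator $=\mathrm{Alt}(ad_x^{(3)}[[r,r]])$ up to scalar'' can only give you invariance of the antisymmetric part of $[[r,r]]$, which is a priori weaker than condition (ii). To get (ii) on the nose one must also use (i), e.g.\ by writing $r=a+s$ with $a$ skew and $s=\tfrac12(r+\sigma(r))$ invariant and checking that the contributions of $s$ to $[[r,r]]$ are themselves invariant, so that invariance of $[[r,r]]$ reduces to invariance of the Schouten-type term $[[a,a]]\in\frak{g}\wedge\frak{g}\wedge\frak{g}$; this extra bookkeeping is part of what the cited pages of \cite{CP} actually do, and your sketch glosses over it.
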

\begin{proof}
See pp. 51-54 of \cite{CP}.
\end{proof}
\noindent The simplest way to ensure that condition (ii) of Proposition \ref{SSD11} is satisfied is to demand that 
\begin{equation}
\label{SSD12}
[[r,r]]=0.
\end{equation}
Equation \ref{SSD12} is called the \textit{classical Yang-Baxter equation} (CYBE).  The CYBE motivates the following definition:
\begin{definition}
\label{SSD13}
A coboundary Lie bialgebra $(\frak{g},\delta r)$ is \textit{quasitriangular} if $r$ is a solution of the CYBE.   Furthermore, if $r$ is skew-symmetric, that is, $r\in \frak{g}\wedge\frak{g}\subset \frak{g}\otimes \frak{g}$, then $(\frak{g},\delta r)$ is said to be \textit{triangular}.  
\end{definition}

\begin{example}
\label{trLBA}
Let $\frak{g}$ be the two dimensional Lie algebra with basis $x,y$ and commutator relation $[x,y]=x$.  Define $r=y\wedge x$.  Then $(\frak{g},\delta r)$ is a triangular Lie bialgebra, where $\gamma:=\delta r$ is given explicitly by
\begin{equation}
\nonumber
\gamma(x)=0,\hspace*{0.2in} \gamma(y)=x\wedge y.
\end{equation}
\end{example}
\noindent Before turning to the Drinfeld double, we recall the following notion:
\begin{definition}
Let $\frak{g}$ be a Lie algebra and let $\langle\cdot ,\cdot\rangle$ be  a bilinear form on $\frak{g}$.  $\frak{g}$ is \textit{ad-invariant} with respect to $\langle\cdot ,\cdot\rangle$ if 
\begin{equation}
\langle [x,y],z\rangle = \langle x,[y,z]\rangle,\hspace*{0.1in} \forall ~x,y,z\in \frak{g}.
\end{equation}
\end{definition}
\noindent  Now let $(\frak{g},\gamma_{\frak{g}})$ be a finite dimensional Lie bialgebra and let $(\frak{g}^\ast,\gamma_{\frak{g}^\ast})$ be the associated dual Lie bialgebra.   Consider the direct sum
\begin{equation}
\nonumber
\frak{g}\oplus \frak{g}^\ast
\end{equation}
and equip it with the symmetric, nondegenerate bilinear form $\langle\cdot, \cdot\rangle$ defined by
\begin{equation}
\nonumber
\langle x+\xi,y+\eta\rangle = \xi(y)+\eta(x),
\end{equation}
where we write $x+\xi$ and $y+\eta$ for $(x,\xi),~(y,\eta)\in \frak{g}\oplus \frak{g}^\ast$.  The Drinfeld double of $(\frak{g},\gamma_{\frak{g}})$, denoted by $D(\frak{g})$, is the unique quasitriangular Lie bialgebra which satisfies the following condtions:
\begin{itemize}
\item[(1)] As a vector space, 
\begin{equation}
\nonumber
D(\frak{g})=\frak{g}\oplus \frak{g}^\ast.
\end{equation}
\item[(2)] As a Lie algebra, $D(\frak{g})$ is ad-invariant with respect  to the inner product $\langle\cdot,\cdot\rangle$ and contains $\frak{g}$ and $\frak{g}^\ast$ as Lie subalgebras.
\item[(3)] The cobracket on $D(\frak{g})$ is defined by $\gamma_D:=\gamma_{\frak{g}}-\gamma_{\frak{g}^\ast}$.
\end{itemize}
Let $[\cdot,\cdot]_D$, $[\cdot,\cdot]_{\frak{g}}$, and $[\cdot,\cdot]_{\frak{g}^\ast}$ denote the Lie brackets on $D(\frak{g})$, $\frak{g}$, and $\frak{g}^\ast$ respectively.  Condition (2) implies that 
\begin{equation}
\nonumber
[x,y]_D=[x,y]_{\frak{g}},\hspace*{0.1in} [\xi,\eta]_D=[\xi,\eta]_{\frak{g}^\ast},\hspace*{0.1in} [x,\xi]_D=ad^\ast_x \xi-ad^\ast_\xi x
\end{equation}
for all $x,y\in \frak{g}$ and $\xi,\eta\in \frak{g}^\ast$, where $ad^\ast$ denotes the coadjoint action of $\frak{g}$ on $\frak{g}^\ast$ and $\frak{g}^\ast$ on $\frak{g}$.  Explicitly, $ad^\ast_x: \frak{g}^\ast\rightarrow \frak{g}^\ast$ and  $ad^\ast_\xi: \frak{g}\rightarrow \frak{g}$ are defined by $ad^\ast_x:=-ad^t_x$ and $ad^\ast_\xi:=-ad^t_\xi$ where $ad^t_x$ and $ad^t_\xi$ are the ordinary duals of $ad_x: \frak{g}\rightarrow \frak{g}$ and $ad_\xi: \frak{g}^\ast \rightarrow \frak{g}^\ast$.  In dealing with the Drinfeld double, we will drop the ``$D$", ``$\frak{g}$", and ``$\frak{g}^\ast$" that appear as subscripts in the Lie brackets of $D(\frak{g})$, $\frak{g}$, and $\frak{g}^\ast$ respectively.  Condition (2) implies that the triple $(D(\frak{g}),\frak{g},\frak{g}^\ast)$ is a \textit{Manin triple} with respect to the inner product $\langle \cdot, \cdot\rangle$.  In fact, there is a one to one correspondence between finite dimensional Lie bialgebras and Manin triples (see \cite{CP}).  

Lastly, condition (3) implies that $D(\frak{g})$ is quasitriangular with r-matrix
\begin{equation}
\nonumber
r=\sum_i e_i\otimes e_i^\ast
\end{equation}
where $e_1,\dots, e_n$ is any basis on $\frak{g}$ and $e_1^\ast,\dots, e_n^\ast$ is the corresponding dual basis.

\begin{example}
\label{DrinExample}
Let $(\frak{g},\gamma)$ be the 2-dimensional Lie bialgebra with basis $x,y$ satisfying $[x,y]=x$ and cobracket $\gamma(x)=0$ and $\gamma(y)=x\wedge y$.  Let $x^\ast, y^\ast$ denote the corresponding dual basis.  The commutator relations on  $D(\frak{g})$ are 
\begin{align}
\nonumber
[x,y]&=x,\hspace*{0.1in} [x^\ast,y^\ast]=y^\ast,\hspace*{0.1in} [x,x^\ast]=-y^\ast,\hspace*{0.1in}[x,y^\ast]=0\\
\nonumber
& [y,x^\ast]=x^\ast+y,\hspace*{0.1in} [y,y^\ast]=-x
\end{align}
The r-matrix is $r=x\otimes x^\ast+y\otimes y^\ast$.
\end{example} 

\section{$\frak{g}$-quasi-Frobenius Lie Algebras}
\noindent We begin with the formal definition:
\begin{definition}
\label{gqFLA8}
A \textit{$\frak{g}$-quasi-Frobenius Lie algebra} is a triple $(\frak{q},\beta,\rho)$ such that $(\frak{q},\beta)$ is a quasi-Frobenius Lie algebra and $\rho: \frak{g}\rightarrow \frak{gl}(\frak{q})$, $x\mapsto \rho_x$ is a left $\frak{g}$-module structure on $\frak{q}$ such that 
\begin{itemize}
\item[(i)] $\rho_x$ is a derivation on $\frak{q}$ for all $x\in \frak{g}$
\item[(ii)] $\beta(\rho_x(u),v)+\beta(u,\rho_x(v))=0$ for all $x\in \frak{g}$, $u,v\in \frak{q}$ ($\frak{g}$-invariance)
\end{itemize}
\end{definition}

\noindent In this section, we prove a result for the general construction of $\frak{g}$-quasi-Frobenius Lie algebras.  Before doing so, we make the following observation:
\begin{proposition}
\label{gqFLA11}
Let $(\frak{q},\beta)$ be a quasi-Frobenius Lie algebra and let $\mbox{Aut}(\frak{q},\beta)$ be the automorphism group of $(\frak{q},\beta)$.  Then $\mbox{Aut}(\frak{q},\beta)$ is an embedded Lie subgroup of $GL(\frak{q})$.
\end{proposition}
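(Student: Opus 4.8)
The plan is to realize $\mathrm{Aut}(\mathfrak{q},\beta)$ as the intersection of two closed subgroups of $GL(\mathfrak{q})$ and invoke Cartan's closed subgroup theorem. First I would write $\mathrm{Aut}(\mathfrak{q},\beta) = \mathrm{Aut}(\mathfrak{q}) \cap \mathrm{Sp}(\mathfrak{q},\beta)$, where $\mathrm{Aut}(\mathfrak{q})$ is the group of Lie algebra automorphisms of $\mathfrak{q}$ and $\mathrm{Sp}(\mathfrak{q},\beta) = \{\varphi \in GL(\mathfrak{q}) : \varphi^\ast\beta = \beta\}$ is the symplectic group of the nondegenerate form $\beta$ (recall $\dim\mathfrak{q}$ is even by Remark \ref{QFLA6}). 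Each of these is visibly a closed subgroup of $GL(\mathfrak{q})$: $\mathrm{Sp}(\mathfrak{q},\beta)$ is cut out by the polynomial equations $\beta(\varphi u,\varphi v) = \beta(u,v)$ as $u,v$ range over a basis, and $\mathrm{Aut}(\mathfrak{q})$ is cut out by the polynomial equations $\varphi[u,v] = [\varphi u,\varphi v]$; both conditions are closed (indeed Zariski-closed) in $GL(\mathfrak{q})$. Hence $\mathrm{Aut}(\mathfrak{q},\beta)$, being an intersection of closed subgroups, is a closed subgroup of $GL(\mathfrak{q})$.

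By Cartan's closed subgroup theorem, a closed subgroup of a Lie group is an embedded Lie subgroup. Applying this with the Lie group $GL(\mathfrak{q})$ gives the conclusion: $\mathrm{Aut}(\mathfrak{q},\beta)$ is an embedded Lie subgroup of $GL(\mathfrak{q})$. I would also remark, for completeness, that its Lie algebra is
\begin{equation}
\nonumber
\mathfrak{aut}(\mathfrak{q},\beta) = \{\, D \in \mathfrak{gl}(\mathfrak{q}) : D \text{ is a derivation of } \mathfrak{q} \text{ and } \beta(Du,v) + \beta(u,Dv) = 0 \ \forall u,v \,\},
\end{equation}
which is exactly the space of infinitesimal symmetries appearing in Definition \ref{gqFLA8}; this can be seen by differentiating one-parameter subgroups $t \mapsto \exp(tD)$ lying in $\mathrm{Aut}(\mathfrak{q},\beta)$.

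The argument is essentially immediate once the reduction to Cartan's theorem is made, so there is no serious obstacle. The only point requiring a line of care is the verification that the defining conditions are genuinely closed (topologically), which is clear since they are given by the vanishing of continuous — in fact polynomial — functions of the matrix entries of $\varphi$ (for the automorphism condition) and of $\varphi$ together with the entries of a fixed Gram matrix of $\beta$ (for the symplectic condition). No appeal to the preceding propositions is needed beyond Remark \ref{QFLA6} for the parity of $\dim\mathfrak{q}$, which is not even strictly necessary for the statement.
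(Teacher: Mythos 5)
Your argument is correct and is essentially identical to the paper's proof: both write $\mathrm{Aut}(\mathfrak{q},\beta)=\mathrm{Aut}(\mathfrak{q})\cap\mathrm{Sp}(\mathfrak{q},\beta)$, observe that each factor is a closed (polynomially cut out) subgroup of $GL(\mathfrak{q})$, and invoke the closed subgroup theorem. Your additional remark identifying the Lie algebra of $\mathrm{Aut}(\mathfrak{q},\beta)$ is accurate and consistent with Definition \ref{gqFLA8}, though the paper does not include it.
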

\begin{proof}
As a set,  $\mbox{Aut}(\frak{q},\beta)=\mbox{Aut}(\frak{q})\cap\mbox{Sp}(\frak{q},\beta)$ where $\mbox{Aut}(\frak{q})$ is the group of automorphisms of the Lie algebra $\frak{q}$ and $\mbox{Sp}(\frak{q},\beta)$ is the group of linear symplectomorphisms of $(\frak{q},\beta)$, where the latter is regarded as a symplectic vector space.  Since $\mbox{Aut}(\frak{q})$ and $\mbox{Sp}(\frak{q},\beta)$ are both closed subgroups of $GL(\frak{q})$, each being the zero set of a collection of polynomials, $\mbox{Aut}(\frak{q},\beta)$ is also a closed subgroup of $GL(\frak{q})$.  By the closed subgroup theorem \cite{Wa}, $\mbox{Aut}(\frak{q},\beta)$ is an embedded Lie subgroup of $GL(\frak{q})$.
\end{proof}

\begin{proposition}
\label{gqFLA12}
Let $(\frak{q},\beta)$ be a quasi-Frobenius Lie algebra and let 
\begin{equation}
\nonumber
\rho: G\rightarrow \mbox{Aut}(\frak{q},\beta)\subset GL(\frak{q}),~g\mapsto \rho_g
\end{equation}
be a Lie group homomorphism.  Define 
\begin{equation}
\nonumber
\rho':=\rho_{\ast,e}: \frak{g}\rightarrow \frak{gl}(\frak{q}),~\hspace*{0.1in} x\mapsto \rho'_x.
\end{equation}
Then $(\frak{q},\beta,\rho')$ is a $\frak{g}$-quasi-Frobenius Lie algebra.  In particular, if $G$ is any Lie subgroup of $\mbox{Aut}(\frak{q},\beta)$, then $(\frak{q},\beta)$ admits the structure of a $\frak{g}$-quasi-Frobenius Lie algebra.
\end{proposition}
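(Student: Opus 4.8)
The plan is to verify the three defining properties of a $\frak{g}$-quasi-Frobenius Lie algebra directly, by differentiating at the identity the corresponding group-level statements that hold because $\rho$ lands in $\mbox{Aut}(\frak{q},\beta)$. First I would record what it means for $\rho_g$ to lie in $\mbox{Aut}(\frak{q},\beta)$: each $\rho_g$ is a Lie algebra automorphism of $\frak{q}$, i.e. $\rho_g([u,v])=[\rho_g(u),\rho_g(v)]$, and each $\rho_g$ is a linear symplectomorphism, i.e. $\beta(\rho_g(u),\rho_g(v))=\beta(u,v)$, for all $u,v\in\frak{q}$. Since $\rho$ is a Lie group homomorphism, $\rho'=\rho_{\ast,e}$ is automatically a Lie algebra homomorphism $\frak{g}\to\frak{gl}(\frak{q})$, which is exactly the statement that $\rho'$ is a left $\frak{g}$-module structure on $\frak{q}$.

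For property (i), consider for fixed $u,v\in\frak{q}$ the smooth curve $t\mapsto\rho_{\exp(tx)}([u,v])-[\rho_{\exp(tx)}(u),\rho_{\exp(tx)}(v)]$, which is identically zero. Differentiating at $t=0$ and using the product (Leibniz) rule on the bracket term gives
\begin{equation}
\nonumber
\rho'_x([u,v])=[\rho'_x(u),v]+[u,\rho'_x(v)],
\end{equation}
which is precisely the derivation condition. For property (ii), differentiate the identity $\beta(\rho_{\exp(tx)}(u),\rho_{\exp(tx)}(v))=\beta(u,v)$ at $t=0$; since the right-hand side is constant, the left-hand side has zero derivative, and the Leibniz rule for the bilinear form $\beta$ yields
\begin{equation}
\nonumber
\beta(\rho'_x(u),v)+\beta(u,\rho'_x(v))=0,
\end{equation}
the $\frak{g}$-invariance of $\beta$. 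Together with the fact that $(\frak{q},\beta)$ is already quasi-Frobenius, this shows $(\frak{q},\beta,\rho')$ is a $\frak{g}$-quasi-Frobenius Lie algebra. The final ``in particular'' clause is then immediate: if $G$ is a Lie subgroup of $\mbox{Aut}(\frak{q},\beta)$ (which is an embedded Lie subgroup of $GL(\frak{q})$ by Proposition \ref{gqFLA11}), apply the construction to the inclusion $\rho: G\hookrightarrow\mbox{Aut}(\frak{q},\beta)$, whose differential $\rho'$ is the inclusion $\frak{g}\hookrightarrow\frak{gl}(\frak{q})$ of the Lie subalgebra.

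I do not anticipate a serious obstacle here: the only mildly delicate point is the justification that differentiation commutes with the bracket and with $\beta$, which is just the standard Leibniz rule for differentiating a smooth curve into a finite-dimensional space composed with a fixed bilinear map, together with the observation that $t\mapsto\rho_{\exp(tx)}$ is a smooth curve in $GL(\frak{q})$ with derivative $\rho'_x$ at $t=0$. One could alternatively phrase the whole argument infinitesimally from the start by noting that $\mbox{Aut}(\frak{q},\beta)$ has Lie algebra equal to $\{A\in\frak{gl}(\frak{q}) : A \text{ is a derivation of } \frak{q} \text{ and } \beta(Au,v)+\beta(u,Av)=0\ \forall u,v\}$, so that $\rho'_x$ lies in this Lie algebra by construction; but the curve-differentiation argument is self-contained and I would present that.
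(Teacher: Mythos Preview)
Your proposal is correct and follows essentially the same approach as the paper: differentiate the group-level identities $\rho_{\exp(tx)}([u,v])=[\rho_{\exp(tx)}(u),\rho_{\exp(tx)}(v)]$ and $\beta(\rho_{\exp(tx)}(u),\rho_{\exp(tx)}(v))=\beta(u,v)$ at $t=0$. The only cosmetic difference is that the paper carries out these derivatives explicitly in a fixed basis of $\frak{q}$ (writing $\rho_{\exp(tx)}(u)=\sum_i a_i(t)e_i$, etc.) rather than appealing directly to the Leibniz rule for bilinear maps as you do.
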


\begin{proof}
Since $\rho$ is a Lie group homomorphism, it immediately follows that $\rho': \frak{g}\rightarrow \frak{gl}(\frak{q})$ is a representation of $\frak{g}$ on $\frak{q}$.  We now show that
\begin{equation}
\label{gqFLA12e1}
\rho_x([u,v])=[\rho_x(u),v]+[u,\rho_x(v)]
\end{equation}
and
\begin{equation}
\label{gqFLA12e2}
\beta(\rho_x(u),v)+\beta(u,\rho_x(v))=0
\end{equation}
for all $x\in \frak{g}$ and $u,v\in \frak{q}$.  To do this,  fix a basis $e_1,e_2,\dots, e_n$ on $\frak{q}$.  Since $\rho_{\mbox{exp}(tx)}(u),~\rho_{\mbox{exp}(tx)}(v)\in \frak{q}$, we have
\begin{equation}
\label{gqFLA12e3}
\rho_{\mbox{exp}(tx)}(u)=\sum_i a_i(t)e_i,\hspace*{0.2in}\rho_{\mbox{exp}(tx)}(v)=\sum_i b_i(t)e_i
\end{equation}
for some smooth functions $a_i(t)$, $b_i(t)$, $i=1,\dots, n$.  Hence,  
\begin{equation}
\label{gqFLA12e4}
\rho'_{x}(u)=\sum_i \dot{a}_i(0)e_i,\hspace*{0.2in}\rho'_x(v)=\sum_i \dot{b}_i(0)e_i.
\end{equation}
Since $\rho_g\in \mbox{Aut}(\frak{q},\beta)$ for all $g\in G$, we have
\begin{align}
\label{gqFLA12e5}
\rho_{\mbox{exp}(tx)}([u,v])&=[\rho_{\mbox{exp}(tx)}(u),\rho_{\mbox{exp}(tx)}(v)].
\end{align}
Substituting (\ref{gqFLA12e3}) into the right side of (\ref{gqFLA12e5}) and applying $\frac{d}{dt}|_{t=0}$ to both sides of (\ref{gqFLA12e5}) gives
\begin{align}
\nonumber
\rho'_x([u,v])&=\frac{d}{dt}\big|_{t=0}[\rho_{\mbox{exp}(tx)}(u),\rho_{\mbox{exp}(tx)}(v)]\\
\nonumber
&=\frac{d}{dt}\big|_{t=0} \sum_{i,j} a_i(t)b_j(t)[e_i,e_j]\\
\nonumber
&=\sum_{i,j}(\dot{a}_i(0)b_j(0)[e_i,e_j]+a_i(0)\dot{b}_j(0)[e_i,e_j])\\
\label{gqFLA12e6}
&=[\rho'_x(u),v]+[u,\rho'_x(v)],
\end{align}
which proves (\ref{gqFLA12e1}).  

For equation (\ref{gqFLA12e2}), note that 
\begin{equation}
\label{gqFLA12e7}
\beta(\rho_{\mbox{exp}(tx)}(u),\rho_{\mbox{exp}(tx)}(v))=\beta(u,v)
\end{equation}
since $\rho_g\in \mbox{Aut}(\frak{q},\beta)$ for all $g\in G$.  Substituting (\ref{gqFLA12e3}) into the left side of (\ref{gqFLA12e7}) and applying $\frac{d}{dt}\big|_{t=0}$ to both sides of (\ref{gqFLA12e7}) gives
\begin{equation}
\nonumber
\beta(\rho'_x(u),v)+\beta(u,\rho'_x(v))=0.
\end{equation}
This completes the proof.
\end{proof}
\noindent A trivial example of a $\frak{g}$-quasi-Frobenius Lie algebra is obtained by equipping any quasi-Frobenius Lie algebra with the trivial $\frak{g}$-action.  We now consider a more interesting example which is an application of Proposition \ref{gqFLA12}.
\begin{example}
\label{gqFLA13}
Let $\frak{q}$ be the 4-dimensional Lie algebra $\{e_1,e_2,e_3,e_4\}$ whose non-zero commutator relations are given by  \cite{Burde}:
\begin{align}
\nonumber
[e_1,e_2]=e_2,\hspace*{0.1in} [e_1,e_3]=e_3,\hspace*{0.1in}[e_1,e_4]=2e_4,\hspace*{0.1in}[e_2,e_3]=e_4.
\end{align}
Let $\alpha:\frak{q}\rightarrow \mathbb{R}$ be the linear map defined by $\alpha(e_i)=0$ for $i=1,2,3$ and $\alpha(e_4)=1$.  Define $\beta(u,v):=\alpha([u,v])$ for all $u,v\in \frak{q}$.  Then the matrix representation of $\beta$ with respect to the basis $\{e_1,e_2,e_3,e_4\}$ is 
\begin{equation}
\nonumber
(\beta_{ij})=\left(\begin{array}{cccc}
0 & 0 & 0 & 2\\
0 & 0 & 1 & 0\\
0 & -1 & 0 & 0\\
-2 & 0 & 0 & 0\\
\end{array}\right)
\end{equation}
Hence, $\beta$ is nondegenerate which shows that $(\frak{q},\alpha)$ is a Frobenius Lie algbera.  Let $G$ be the set of linear isomorphisms on $\frak{q}$ whose matrix representations with respect to $\{e_1,e_2,e_3,e_4\}$ is given by
\begin{equation}
\label{gqFLA13e1}
\left\{\left(\begin{array}{cccc}
1 & 0 & 0 & 0\\
0 & b & c & 0\\
0 & 0 & 1/b & 0\\
a & 0 & 0 & 1\\
\end{array}\right)~\Big|~a,c\in \mathbb{R},~b>0\right\}.
\end{equation}
A direct calculation shows that $G$ is a 3-dimensional non-abelian, connected Lie subgroup of $Aut(\frak{q},\beta)$.  Let $\rho: G\rightarrow Aut(\frak{q},\beta)\subset GL(\frak{q})$ be the inclusion map (which is clearly a Lie group homomorphism).  Proposition \ref{gqFLA12} implies that $(\frak{q},\beta,\rho')$ is a $\frak{g}$-quasi-Frobenius Lie algebra, where $\rho':=\rho_{\ast,e}$.  As a Lie algebra, $\frak{g}$ has basis
\begin{equation}
\label{gqFLA13e2}
x_1:=\left(\begin{array}{cccc}
0 & 0 & 0 & 0\\
0 & 0 & 0 & 0\\
0 & 0 & 0 & 0\\
1 & 0 & 0 & 0\\
\end{array}\right),\hspace*{0.1in} x_2:=\left(\begin{array}{cccc}
0 & 0 & 0 & 0\\
0 & 1 & 0 & 0\\
0 & 0 & -1 & 0\\
0 & 0 & 0 & 0\\
\end{array}\right),\hspace*{0.1in} x_3:=\left(\begin{array}{cccc}
0 & 0 & 0 & 0\\
0 & 0 & 1 & 0\\
0 & 0 & 0 & 0\\
0 & 0 & 0 & 0\\
\end{array}\right),
\end{equation}
where we have identified $G$ with its matrix represenations in (\ref{gqFLA13e1}).  The non-zero commutator relations are
\begin{equation}
\nonumber
[x_2,x_3]=2x_3.
\end{equation}
Let $a=a_1x_1+a_2x_2+a_3x_3\in \frak{g}$.   Since $\rho:G\rightarrow Aut(\frak{q},\beta)\subset GL(\frak{q})$ is just the inclusion map, it follows that the matrix representation of $\rho'_a:\frak{q}\rightarrow \frak{q}$ with respect to the basis $\{e_1,e_2,e_3,e_4\}$ is simply 
\begin{equation}
\label{gqFLA13e3}
\rho'_{a}=\left(\begin{array}{cccc}
0 & 0 & 0 & 0\\
0 & a_2 & a_3 & 0\\
0 & 0 & -a_2 & 0\\
a_1 & 0 & 0 & 0\\
\end{array}\right)
\end{equation}
Since $(\frak{q},\beta,\rho')$ is a $\frak{g}$-quasi-Frobenius Lie algebra by Proposition \ref{gqFLA12}, $\rho'_a$ acts on $\frak{q}$ via derivations and satisfies 
\begin{equation}
\nonumber
\beta(\rho'_a(u),v)+\beta(u,\rho'_a(v))=0
\end{equation}
for all $u,v\in \frak{q}$.  
\end{example}

\noindent For later use, we conclude this section with the following natural definition:
\begin{definition}
\label{gqFLA14}
Let $(\frak{q},\beta,\phi)$ and $(\frak{r},\sigma,\mu)$ be $\frak{g}$-quasi-Frobenius Lie algebras.   A homomorphism  from $(\frak{q},\beta,\phi)$ to $(\frak{r},\sigma,\mu)$ is a homomorphism
\begin{equation}
\nonumber
\psi: (\frak{q},\beta)\rightarrow (\frak{r},\sigma)
\end{equation}
of quasi-Frobenius Lie algebras which is also $\frak{g}$-equivariant, that is,
\begin{equation}
\nonumber
\psi\circ\phi_x=\mu_x\circ \psi
\end{equation}
for all $x\in \frak{g}$.
\end{definition}

\subsection{Categorical Formulation}

\noindent  In this section, we apply the idea of categorification to quasi-Frobenius Lie algebras.   The upshot of this is the notion of a \textit{quasi-Frobenius Lie object}, which can be viewed as the analogue of a Frobenius object in the current setting.  The starting point for this particular step is the categorification of Lie algebra due to  Goyvaerts and Vercruysse \cite{GV}: 
\begin{definition}
\label{gqFLA1}
A \textit{Lie object} in an additive symmetric monoidal category $(\mathcal{C},\otimes, I,\Phi,l,r,c)$ is a pair $(L,b)$ where $L$ is an object of $\mathcal{C}$ and $b: L\otimes L\rightarrow L$ is a morphism such that 
\begin{itemize}
\item[(i)] $b+b\circ c=0_{L\otimes L,L}$
\item[(ii)] $b\circ (id_L\otimes b)\circ (id_{L\otimes (L\otimes L)}+c_{L\otimes L,L}\circ \Phi^{-1}_{L,L,L}+\Phi_{L,L,L}\circ c_{L,L\otimes L})=0_{L\otimes (L\otimes L),L}$
\end{itemize}
\end{definition}
\begin{remark}
With regard to the notation in Definition \ref{gqFLA1}, $\otimes$ is the monoidal product; $I$ is the unit object; $\Phi$ is the associator; $l$ and $r$ are the left and right unit maps respectively; and $c$ is the braiding.
\end{remark}
\begin{example}
\label{gqFLA2}
Let $\textbf{Vect}_{k}$ be the symmetric monoidal additive category of  finite dimensional vector spaces over $k$.  It follows readily from Definition \ref{gqFLA1} that a Lie object $(L,b)$ in $\textbf{Vect}_{k}$ is precisely a finite dimensional Lie algebra $L$ over $k$ with Lie bracket $[x,y]:=b(x,y)$.
\end{example}

\begin{definition}
\label{gqFLA3}
A \textit{quasi-Frobenius Lie object} in an additive symmetric monoidal category $(\mathcal{C},\otimes, I,\Phi,l,r,c)$ is a triple $(L,b,\overline{\beta})$ such that
\begin{itemize}
\item[(1)] $(L,b)$ is a Lie object.
\item[(2)] $L$ has a left dual object $L^\ast$ (where $\varepsilon: L^\ast \otimes L\rightarrow I$ and $\eta: I\rightarrow L\otimes L^\ast$ denote the evaluation and coevaluation morphisms respectively).
\item[(3)] $\overline{\beta}: L\stackrel{\sim}{\rightarrow} L^\ast$ is an isomorphism such that the induced morphism
\begin{equation}
\nonumber
\beta:=\varepsilon\circ (\overline{\beta}\otimes id_L): L\otimes L\rightarrow I,
\end{equation}
satisfies 
\begin{align}
\nonumber
\beta+\beta\circ c_{L,L}=0_{L\otimes L,I}
\end{align}
and
\begin{align}
\nonumber
\beta\circ (b\otimes id_L)\circ [id_{(L\otimes L)\otimes L}+\Phi^{-1}_{L,L,L}\circ c_{L\otimes L,L}+c_{L,L\otimes L}\circ \Phi_{L,L,L}]=0_{(L\otimes L)\otimes L,I}.
\end{align}
\end{itemize}
If there exists a morphism $\alpha: L\rightarrow I$ such that $\beta=\alpha \circ b$, then $(L,b,\overline{\beta})$ is called a \textit{Frobenius Lie object}.  
\end{definition}

\begin{example}
\label{gqFLA4}
Let $(L,b,\overline{\beta})$ be a quasi-Frobenius Lie object  in $\mathbf{Vect}_k$.  Then its easy to see that $L$ is a  quasi-Frobenius Lie algebra over $k$ with Lie bracket $[x,y]:=b(x,y)$ and $\beta: L\otimes L\rightarrow k$ (as defined in (3) of Definition \ref{gqFLA3}) is the nondegenerate 2-cocycle in the Lie algebra cohomology of $L$.  Likewise, a Frobenius Lie object in $\mathbf{Vect}_k$ is just a Frobenius Lie algebra.
\end{example}

\begin{proposition}
\label{gqFLA5}
The category $\mathbf{Rep}(\frak{g})$ of finite dimensional left $\frak{g}$-modules over $k$ is an additive symmetric monoidal category where every object has a left dual and
\begin{itemize}
\item[(i)] the monoidal product is the usual tensor product of left $\frak{g}$-modules and $\frak{g}$-linear maps
\item[(ii)] the unit object is $k$ with the trivial $\frak{g}$-action
\item[(iii)] the associator $\Phi$ is the trivial one
\item[(iv)] for any object $(V,\rho)$ in  $\mathbf{Rep}(\frak{g})$, the left and right morphisms $l_V: k\otimes V\stackrel{\sim}{\rightarrow} V$ and $r_V: V\otimes k\stackrel{\sim}{\rightarrow} V$ are the trivial ones
\item[(v)] for objects $(V,\rho)$, $(W,\phi)$ in $\mathbf{Rep}(\frak{g})$, the braiding $c_{V,W}: V\otimes W\stackrel{\sim}{\rightarrow} W\otimes V$ is simply the linear map that sends $v\otimes w\in V\otimes W$ to $w\otimes v\in W\otimes V$
\item[(vi)] the left dual of an object $(V,\rho)$ in $\mathbf{Rep}(\frak{g})$ is the dual representation $(V^\ast,\rho^\ast)$ (i.e.,  $\rho^\ast_x:=-\rho_x^t: V^\ast \rightarrow V^\ast$ for $x\in \frak{g}$, where $\rho^t_x$ is the dual or transpose of $\rho_x: V\rightarrow V$)
\item[(vii)] the evaluation morphism is $\varepsilon: V^\ast \otimes V\rightarrow k$, $\varepsilon(\xi,v):=\xi(v)$ and the coevaluation morphism is $\eta: k\rightarrow V\otimes V^\ast$, $1\mapsto \sum_i e_i\otimes\delta^i$ where $e_i$ is any basis of $V$ and $\delta^i$ is the corresponding dual basis.
\end{itemize}
\end{proposition}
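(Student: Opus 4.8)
The plan is to deduce the statement from two standard facts and then match up the resulting abstract structure with the explicit formulas in (i)--(vii). First, $\mathbf{Rep}(\frak{g})$ is canonically isomorphic to the category of finite dimensional left modules over the universal enveloping algebra $U(\frak{g})$. Second, $U(\frak{g})$ is a cocommutative Hopf algebra with invertible antipode: comultiplication $\Delta(x)=x\otimes 1+1\otimes x$, counit $\epsilon(x)=0$, and antipode $S(x)=-x$ for $x\in\frak{g}$, all extended to $U(\frak{g})$ as algebra (anti)homomorphisms. For any Hopf algebra $H$ it is classical that the category of left $H$-modules is monoidal with $\otimes_k$ as product and $k$ (acted on through $\epsilon$) as unit; cocommutativity makes it symmetric via the flip; and invertibility of $S$ gives every finite dimensional module a left dual. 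Unwinding these constructions for $H=U(\frak{g})$ reproduces exactly the data listed: the action on $V\otimes W$ is $x\cdot(v\otimes w)=(x\cdot v)\otimes w+v\otimes(x\cdot w)$, the dual action is $\rho^\ast_x=-\rho_x^t$, the evaluation and coevaluation are the usual pairing maps, and the associator, unitors, and braiding are the underlying linear ones. Additivity is immediate, since $\mathrm{Hom}_{\frak{g}}(V,W)$ is a $k$-subspace of $\mathrm{Hom}_k(V,W)$, composition is bilinear, and the componentwise direct sum of $\frak{g}$-modules is a biproduct with zero object $\{0\}$.

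For a self-contained proof I would instead verify $\frak{g}$-linearity of each structure map by hand. (1) If $(V,\rho)$, $(W,\phi)$ are $\frak{g}$-modules, then $\psi_x:=\rho_x\otimes id_W+id_V\otimes\phi_x$ satisfies $\psi_{[x,y]}=[\psi_x,\psi_y]$, using $[\rho_x,\rho_y]=\rho_{[x,y]}$, $[\phi_x,\phi_y]=\phi_{[x,y]}$, and cancellation of the cross terms $\rho_x\otimes\phi_y$ and $\rho_y\otimes\phi_x$; the trivial action on $k$ is a module because $\epsilon([x,y])=0$. (2) Since $\otimes_k$ is strictly associative and $k\otimes_k V\cong V\cong V\otimes_k k$ canonically, the trivial associator and unitors intertwine the induced actions, hence are $\frak{g}$-linear. (3) The flip $c_{V,W}(v\otimes w)=w\otimes v$ conjugates $\psi_x$ on $V\otimes W$ to the analogous operator on $W\otimes V$ because the derivation-type action is symmetric in its two slots. (4) $\rho^\ast_x=-\rho_x^t$ is a representation: $-\rho_{[x,y]}^t=-(\rho_x\rho_y-\rho_y\rho_x)^t=\rho_x^t\rho_y^t-\rho_y^t\rho_x^t=[\rho^\ast_x,\rho^\ast_y]$. (5) Evaluation is $\frak{g}$-linear: $\varepsilon\big((\rho^\ast_x\xi)\otimes v+\xi\otimes(\rho_x v)\big)=-\xi(\rho_x v)+\xi(\rho_x v)=0$. (6) Coevaluation is $\frak{g}$-linear: writing $\rho_x e_i=\sum_j a_{ji}e_j$ one gets $\rho^\ast_x\delta^i=-\sum_k a_{ik}\delta^k$, so $\sum_i(\rho_x e_i)\otimes\delta^i+\sum_i e_i\otimes(\rho^\ast_x\delta^i)=\sum_{i,j}a_{ji}e_j\otimes\delta^i-\sum_{i,k}a_{ik}e_i\otimes\delta^k=0$; the same computation shows $\eta(1)$ is basis-independent. (7) The zig-zag identities $(id_V\otimes\varepsilon)\circ(\eta\otimes id_V)=id_V$ and $(\varepsilon\otimes id_{V^\ast})\circ(id_{V^\ast}\otimes\eta)=id_{V^\ast}$ hold at the level of the underlying linear maps, which is the classical statement that $V^\ast$ is the dual of the finite dimensional vector space $V$.

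I do not expect a genuine obstacle here: once $\frak{g}$-linearity of all the structure morphisms is established, the pentagon, triangle, hexagon, symmetry, and snake axioms are inherited for free from $\mathbf{Vect}_k$, since each is an equality of linear maps that already holds there. The one point requiring a little care is step (6), where the coevaluation looks basis-dependent; this is handled exactly as in the classical finite dimensional case by the explicit index computation above, which simultaneously shows well-definedness and $\frak{g}$-invariance of $\eta(1)=\sum_i e_i\otimes\delta^i$.
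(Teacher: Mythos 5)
Your proposal is correct. The paper does not actually carry out this verification: its ``proof'' consists of the single remark that checking the axioms of an additive symmetric monoidal category for $(\mathbf{Rep}(\frak{g}),\otimes,k,\Phi,l,r,c)$ is an easy exercise, so your second paragraph is precisely the exercise the paper leaves to the reader, done correctly --- the cancellation of cross terms for the tensor-product action, the sign check $-\rho^t_{[x,y]}=[\rho^\ast_x,\rho^\ast_y]$, the $\frak{g}$-linearity of $\varepsilon$ and of $\eta$ via the index computation with $\rho_x e_i=\sum_j a_{ji}e_j$, and the observation that the coherence and zig-zag identities are inherited from $\mathbf{Vect}_k$ once all structure maps are known to be $\frak{g}$-linear. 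Your first paragraph, deducing everything from the cocommutative Hopf structure of $U(\frak{g})$, is a genuinely different and more structural route that the paper does not mention; it buys generality (the same argument gives duals and symmetry for any cocommutative Hopf algebra with invertible antipode) at the cost of invoking standard Hopf-algebra machinery, whereas the direct check is self-contained. The only cosmetic wobble is your claim that the invariance computation ``simultaneously'' shows basis-independence of $\eta(1)$; basis-independence is really the separate (and classical) fact that $\sum_i e_i\otimes\delta^i$ corresponds to $id_V$ under $V\otimes V^\ast\cong \mathrm{End}(V)$, but this does not affect the correctness of the proof.
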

\begin{proof}
It is an easy exercise to verify that $(\mathbf{Rep}(\frak{g}),\otimes,k,\Phi,l,r,c)$ satisfies all the axioms of an additive symmetric monoidal category.  
\end{proof}

\noindent The next result establishes the categorical formulation of $\frak{g}$-quasi-Frobenius Lie algberas.
\begin{proposition}
\label{gqFLA7}
A quasi-Frobenius Lie object in $\mathbf{Rep}(\frak{g})$ is a $\frak{g}$-quasi-Frobenius Lie algebra.
\end{proposition}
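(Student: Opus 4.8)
The plan is to unwind Definition \ref{gqFLA3} in the particular symmetric monoidal category $\mathbf{Rep}(\frak{g})$ described in Proposition \ref{gqFLA5} and match the resulting data, condition by condition, against Definition \ref{gqFLA8}, and then record the inverse assignment so that the two notions genuinely coincide.

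First I would note that the forgetful functor $\mathbf{Rep}(\frak{g})\to\mathbf{Vect}_k$ is faithful and monoidal, and that it carries the associator, unit isomorphisms, braiding, duals, and (co)evaluation of $\mathbf{Rep}(\frak{g})$ to the ``trivial'' ones on the underlying vector spaces. Hence a Lie object $(L,b)$ in $\mathbf{Rep}(\frak{g})$ has for underlying vector space a finite dimensional Lie algebra with bracket $[u,v]:=b(u\otimes v)$, exactly as in Example \ref{gqFLA2}: the two axioms of Definition \ref{gqFLA1} reduce verbatim to skew-symmetry and the Jacobi identity because $c$ is the flip and $\Phi$ is trivial. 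The only additional content carried by the requirement that $b$ be a morphism of $\frak{g}$-modules is $\frak{g}$-linearity: writing $\rho$ for the $\frak{g}$-action on $L$ and $\rho^{L\otimes L}$ for the induced action on $L\otimes L$, the identity $b\circ\rho^{L\otimes L}_x=\rho_x\circ b$ spells out as $\rho_x([u,v])=[\rho_x u,v]+[u,\rho_x v]$, which is condition (i) of Definition \ref{gqFLA8}.

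Next I would treat the form. By part (vi) of Proposition \ref{gqFLA5} the left dual of $L$ is $(L^\ast,\rho^\ast)$ with $\rho^\ast_x=-\rho_x^t$, and by part (vii) the evaluation is $\varepsilon(\xi\otimes v)=\xi(v)$, so the induced morphism is the bilinear form $\beta(u,v)=\overline{\beta}(u)(v)$. That $\overline{\beta}$ is an isomorphism in $\mathbf{Rep}(\frak{g})$ says precisely two things: it is a linear isomorphism, hence $\beta$ is nondegenerate; and it intertwines the actions, $\overline{\beta}\circ\rho_x=\rho^\ast_x\circ\overline{\beta}$, which evaluated on $v$ gives $\overline{\beta}(\rho_x u)(v)=-\overline{\beta}(u)(\rho_x v)$, i.e.\ $\beta(\rho_x u,v)+\beta(u,\rho_x v)=0$, which is condition (ii) of Definition \ref{gqFLA8}. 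The remaining two conditions on $\beta$ in Definition \ref{gqFLA3}, namely $\beta+\beta\circ c_{L,L}=0$ and the cocycle identity involving $b\otimes\mathrm{id}_L$, are (again because $c$ is the flip and $\Phi$ is trivial) literally the skew-symmetry of $\beta$ and equation (\ref{QFLA2}), by the same computation already used in Example \ref{gqFLA4}. Thus $(L,\beta)$ is a quasi-Frobenius Lie algebra carrying a $\frak{g}$-action by derivations leaving $\beta$ invariant, i.e.\ a $\frak{g}$-quasi-Frobenius Lie algebra.

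For the converse, given $(\frak{q},\beta,\rho)$ as in Definition \ref{gqFLA8} I would set $b(u\otimes v):=[u,v]$ and define $\overline{\beta}\colon\frak{q}\to\frak{q}^\ast$ by $\overline{\beta}(u):=\beta(u,-)$; nondegeneracy makes $\overline{\beta}$ a linear isomorphism, condition (ii) makes it $\frak{g}$-linear into $(\frak{q}^\ast,\rho^\ast)$, condition (i) makes $b$ a morphism of $\frak{g}$-modules, and the $2$-cocycle and skew-symmetry properties of $\beta$ supply the two displayed identities in Definition \ref{gqFLA3}. These two constructions are mutually inverse. I do not anticipate a real obstacle here: the argument is a dictionary translation, and the only point needing care is keeping straight the dual-module action $\rho^\ast_x=-\rho_x^t$ together with the evaluation and coevaluation maps, so that ``$\overline{\beta}$ is a morphism in $\mathbf{Rep}(\frak{g})$'' unwinds correctly to the $\frak{g}$-invariance of $\beta$; all the Lie-object axioms themselves are inherited unchanged from the vector-space case of Example \ref{gqFLA4}.
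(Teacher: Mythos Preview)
Your proposal is correct and follows essentially the same route as the paper: unwind the data of a quasi-Frobenius Lie object in $\mathbf{Rep}(\frak{g})$, use the $\frak{g}$-linearity of $b$ to obtain the derivation condition and the $\frak{g}$-linearity of $\overline{\beta}$ (together with $\rho^\ast_x=-\rho_x^t$) to obtain the $\frak{g}$-invariance of $\beta$, while the remaining axioms reduce to the ordinary quasi-Frobenius Lie algebra structure as in Example~\ref{gqFLA4}. The only difference is that you also spell out the inverse assignment, establishing a genuine bijection between the two notions; the paper's proof records only the forward direction.
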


\begin{proof}
By definition, a quasi-Frobenius Lie object in $\mathbf{Rep}(\frak{g})$ consists of a representation $(\frak{q},\rho)$ of $\frak{g}$ together with $\frak{g}$-linear maps 
\begin{equation}
\nonumber
b: \frak{q}\otimes \frak{q}\rightarrow \frak{q},\hspace*{0.2in}\overline{\beta}: \frak{q}\stackrel{\sim}{\rightarrow} \frak{q}^\ast,
\end{equation}
which satisfy conditions (1) and (3) of Definition \ref{gqFLA3}. 

We begin by verifying that $(\frak{q},\beta)$ is a quasi-Frobenius Lie algebra.  To start, note that condition (1) of Definition \ref{gqFLA3} implies that $\frak{q}$ is a Lie algebra with Lie bracket $[u,v]:=b(u, v)$.  From  Definition \ref{gqFLA3}, the morphism $\beta: \frak{q}\otimes \frak{q}\rightarrow k$ is given explicitly as
\begin{equation}
\nonumber
\beta(u,v)=\varepsilon(\overline{\beta}(u),v)=\overline{\beta}(u)(v).
\end{equation}
Condition (3) of  Definition \ref{gqFLA3} implies that $\beta$ is a 2-cocycle of $\frak{q}$ with values in $k$ (where $\frak{q}$ acts trivially on $k$).  Furthermore, since $\overline{\beta}: \frak{q}\stackrel{\sim}{\rightarrow} \frak{q}^\ast$ is an isomorphism, it follows that $\beta$ is nondegenerate.  Hence, $(\frak{q},\beta)$ is a quasi-Frobenius Lie algebra.

Since $\overline{\beta}$ is $\frak{g}$-linear (being a morphism of $\mathbf{Rep}(\frak{g})$), we have
\begin{align}
\label{gqFA4e1}
\overline{\beta}(\rho_x (u))(v)=\rho^\ast_x(\overline{\beta}(u))(v)=-\overline{\beta}(u)(\rho_x (v)),~\hspace*{0.1in}\forall~u,v\in \frak{q}
\end{align}
where we recall that $\rho^\ast_x:=-\rho^t_x$.  Expressing the left and right most sides of (\ref{gqFA4e1}) in terms of $\beta$ gives 
\begin{equation}
\nonumber
\beta(\rho_x(u),v)=-\beta(u,\rho_x (v)),
\end{equation}
which proves the $\frak{g}$-invariance of $\beta$, that is, $\beta(\rho_x(u),v)+\beta(u,\rho_x (v))=0$. 

Since $b$ is also $\frak{g}$-linear, we also have
\begin{align}
\nonumber
\rho_x([u,v])&=\rho_x(b(u\otimes v))\\
\nonumber
&=b(\overline{\rho}_x(u\otimes v))\\
\nonumber
&=b(\rho_x(u)\otimes v)+b(u\otimes \rho_x(v))\\
\nonumber
&=[\rho_x(u),v]+[u,\rho_x(v)],
\end{align}
where $\overline{\rho}_x$  in the second equality denotes the induced left $\frak{g}$-module structure on $\frak{q}\otimes \frak{q}$.  Hence, $(\frak{q},\beta,\rho)$ is a $\frak{g}$-quasi-Frobenius Lie algebra.
\end{proof}

\section{The Geometry of $\frak{g}$-quasi-Frobenius Lie algebras}

\subsection{$G$-Symplectic Lie groups}

\begin{definition}
\label{GSLG1}
Let $G$ be a Lie group.  A \textit{$G$-symplectic Lie group} is a triple $(Q,\omega,\varphi)$ where  $(Q,\omega)$ is a symplectic Lie group and
\begin{equation}
\nonumber
\varphi: G\times Q\rightarrow Q,\hspace*{0.1in} (g,q)\mapsto \varphi_g(q):=\varphi(g,q)
\end{equation}
is a smooth left action on $Q$ such that $\varphi_g: (Q,\omega)\rightarrow (Q,\omega)$ is an isomorphism of symplectic Lie groups.  
\end{definition}
\begin{notation}
When dealing with multiple Lie groups, we will denote the identity element of each group simply as $e$ as opposed to $e_G$ for $G$, $e_Q$ for $Q$, and so on when there is no risk of confusion.   
\end{notation}

\begin{proposition}
\label{GSLG2}
Let $(Q,\omega,\varphi)$ be a $G$-symplectic Lie group with action 
\begin{equation}
\nonumber
\varphi: G\times Q\rightarrow Q,\hspace*{0.1in} (g,q)\mapsto \varphi_g(q):=\varphi(g,q).
\end{equation}
Define
\begin{equation}
\nonumber
\varphi': G\rightarrow GL(\frak{q}),\hspace*{0.1in}g\mapsto \varphi'_g:=(\varphi_g)_{\ast,e}:\frak{q}\rightarrow \frak{q}
\end{equation}
\begin{equation}
\nonumber
\varphi'': \frak{g}\rightarrow \frak{gl}(\frak{q}),\hspace*{0.1in}x\mapsto \varphi''_x:=(\varphi')_{\ast,e}(x):\frak{q}\rightarrow \frak{q}.
\end{equation}
Then
\begin{itemize}
\item[(i)] $\varphi'$ is a representation of $G$ on $\frak{q}$ such that $\varphi'_g\in  \mbox{Aut}(\frak{q},\omega_e)$ for all $g\in G$.
\item[(ii)] $(\frak{q},\omega_e,\varphi'')$ is a $\frak{g}$-quasi-Frobenius Lie algebra.  
\end{itemize}
\end{proposition}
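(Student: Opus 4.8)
The plan is to reduce everything to facts about $\varphi$ that have already been established, and then invoke Proposition \ref{gqFLA12}. First I would unpack the hypothesis: for each fixed $g \in G$, $\varphi_g \colon (Q,\omega) \to (Q,\omega)$ is an isomorphism of symplectic Lie groups, which by Definition \ref{SLG4} means $\varphi_g$ is a Lie group automorphism of $Q$ with $\varphi_g^\ast \omega = \omega$. Differentiating the Lie group automorphism $\varphi_g$ at $e$ gives $\varphi'_g = (\varphi_g)_{\ast,e} \colon \frak{q} \to \frak{q}$, which is a Lie algebra automorphism of $\frak{q}$ (the differential at the identity of any Lie group automorphism is a Lie algebra automorphism), hence $\varphi'_g \in \mathrm{Aut}(\frak{q})$. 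Applying Proposition \ref{SLG6} to the symplectic Lie group isomorphism $\varphi_g \colon (Q,\omega) \to (Q,\omega)$, the differential $\varphi'_g = (\varphi_g)_{\ast,e} \colon (\frak{q},\omega_e) \to (\frak{q},\omega_e)$ is a quasi-Frobenius Lie algebra homomorphism, so $\varphi_g'^{\,\ast}\omega_e = \omega_e$, i.e. $\varphi'_g \in \mathrm{Sp}(\frak{q},\omega_e)$. Combining these, $\varphi'_g \in \mathrm{Aut}(\frak{q}) \cap \mathrm{Sp}(\frak{q},\omega_e) = \mathrm{Aut}(\frak{q},\omega_e)$ for every $g$, which is the content of part (i) once we also check that $\varphi'$ is a group homomorphism.

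For the homomorphism property $\varphi'_{gh} = \varphi'_g \circ \varphi'_h$: since $\varphi$ is a left action, $\varphi_{gh} = \varphi_g \circ \varphi_h$ as maps $Q \to Q$; applying the chain rule for the differential at $e$ (using $\varphi_h(e) = e$, which holds because each $\varphi_h$ is a group homomorphism and hence fixes the identity) yields $(\varphi_{gh})_{\ast,e} = (\varphi_g)_{\ast,e} \circ (\varphi_h)_{\ast,e}$, i.e. $\varphi'_{gh} = \varphi'_g \circ \varphi'_h$. Also $\varphi'_e = \mathrm{id}_{\frak{q}}$. Smoothness of $\varphi' \colon G \to GL(\frak{q})$ follows from smoothness of the action $\varphi$ together with smooth dependence of the differential at $e$ on parameters (one can see this concretely by writing $\varphi'_g$ in coordinates via a basis of $\frak{q}$ realized as left-invariant vector fields, as in the proof of Proposition \ref{gqFLA12}). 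Since $\mathrm{Aut}(\frak{q},\omega_e)$ is an embedded Lie subgroup of $GL(\frak{q})$ by Proposition \ref{gqFLA11}, and $\varphi'$ takes values in it and is smooth into $GL(\frak{q})$, the corestriction $\varphi' \colon G \to \mathrm{Aut}(\frak{q},\omega_e)$ is a Lie group homomorphism. This finishes (i).

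For part (ii), I would simply observe that $\varphi'' = (\varphi')_{\ast,e} = \rho'$ in the notation of Proposition \ref{gqFLA12}, applied to the Lie group homomorphism $\rho = \varphi' \colon G \to \mathrm{Aut}(\frak{q},\omega_e) \subset GL(\frak{q})$ and the quasi-Frobenius Lie algebra $(\frak{q},\omega_e)$ (which is quasi-Frobenius by Proposition \ref{SLG2}). Proposition \ref{gqFLA12} then immediately gives that $(\frak{q},\omega_e,\varphi'')$ is a $\frak{g}$-quasi-Frobenius Lie algebra.

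The only genuinely non-routine point is the smoothness of $\varphi' \colon G \to GL(\frak{q})$ and the consequent smoothness of the corestriction to the embedded subgroup $\mathrm{Aut}(\frak{q},\omega_e)$; everything else is bookkeeping with chain rules and the already-proved correspondences. I expect the smoothness argument to be the main obstacle, and I would handle it by fixing a basis $e_1,\dots,e_n$ of $\frak{q}$, extending each $e_i$ to the left-invariant vector field $\tilde e_i$ on $Q$, and noting that the matrix entries of $\varphi'_g$ are obtained by applying $(\varphi_g)_\ast$ at $e$ — equivalently, by differentiating the smooth map $(g,t) \mapsto \varphi_g(\exp(t e_i))$ in $t$ at $t=0$ and reading off components in the basis — which depends smoothly on $g$ because $\varphi$ is smooth. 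Once smoothness into $GL(\frak{q})$ is in hand, the closed/embedded subgroup theorem (Proposition \ref{gqFLA11}) upgrades it to smoothness into $\mathrm{Aut}(\frak{q},\omega_e)$, and Proposition \ref{gqFLA12} closes the argument.
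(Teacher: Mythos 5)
Your proposal is correct and takes essentially the same route as the paper: the chain rule at $e$ (using $\varphi_h(e)=e$) gives that $\varphi'$ is a homomorphism, differentiating the fact that each $\varphi_g$ is a symplectic Lie group automorphism gives $\varphi'_g\in \mathrm{Aut}(\frak{q},\omega_e)$, and part (ii) is exactly the paper's application of Proposition \ref{gqFLA12} to $\varphi'\colon G\rightarrow \mathrm{Aut}(\frak{q},\omega_e)\subset GL(\frak{q})$. The only differences are cosmetic: you cite Proposition \ref{SLG6} where the paper computes $((\varphi_g)^\ast\omega)_e=\omega_e$ directly, and you spell out the smoothness of $\varphi'\colon G\rightarrow GL(\frak{q})$, which the paper leaves implicit.
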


\begin{proof}
Since $\varphi$ is a left action of $G$ on $Q$ and $\varphi_g(e)=e$ for all $g\in G$, we have  
\begin{align}
\nonumber
\varphi'_g\circ \varphi'_h&=(\varphi_g)_{\ast,e}\circ (\varphi_h)_{\ast,e}\\
\nonumber
&=(\varphi_g\circ \varphi_h)_{\ast,e}\\
\nonumber
&=(\varphi_{gh})_{\ast,e}\\
\nonumber
&=\varphi'_{gh}.
\end{align}
Hence, $\varphi'$ is a representation of $G$ on $\frak{q}$.  Furthermore, since $\varphi_g: Q\rightarrow Q$ is both a Lie group isomorphism and a symplectomorphism, it follows that $\varphi'_g:\frak{q}\rightarrow \frak{q}$ is a Lie algebra isomorphism and  
\begin{equation}
\nonumber
\omega_e(u,v)=((\varphi_g)^\ast\omega)_e(u,v)=\omega_e((\varphi_g)_{\ast,e}(u),(\varphi_g)_{\ast,e}(v))=\omega_e(\varphi'_g(u),\varphi'_g(v)),
\end{equation}
which shows that $\varphi'_g\in \mbox{Aut}(\frak{q},\omega_e)$ for all $g\in  G$.   This proves (i).

Statement (ii) follows from an application of Proposition \ref{gqFLA12} to the quasi-Frobenius Lie algebra  $(\frak{q},\omega_e)$ with Lie group homomorphism $\varphi': G\rightarrow \mbox{Aut}(\frak{q},\omega_e)\subset GL(\frak{q})$.  This completes the proof.
 \end{proof}

\begin{remark}
\label{GSLG3}
We will refer to $(\frak{q},\omega_e,\varphi'')$ in Proposition \ref{GSLG2} as the $\frak{g}$-quasi-Frobenius Lie algebra associated to the $G$-symplectic Lie group $(Q,\omega,\varphi)$.
\end{remark}

\noindent The next result provides a means of constructing $G$-symplectic Lie groups.  
\begin{proposition}
\label{GSLG7}
Let $(Q,\omega)$ be a simply connected symplectic Lie group, let $G$ be a Lie group, and let $\rho: G\rightarrow \mbox{Aut}(\frak{q},\omega_e)$, $g\mapsto \rho_g$ be a Lie group homomorphism.  Then there exists a unique smooth left-$G$ action 
\begin{equation}
\nonumber
\widehat{\rho}: G\times Q\rightarrow Q,\hspace*{0.1in} (g,q)\mapsto \widehat{\rho}_g(q),
\end{equation}
such that $(Q,\omega,\widehat{\rho})$ is a $G$-symplectic Lie group and $(\widehat{\rho}_g)_{\ast,e}=\rho_g$. In particular, if $G$ is any Lie subgroup of $\mbox{Aut}(\frak{q},\omega_e)$ and $G\neq \{e\}$, then $(Q,\omega)$ admits the structure of a $G$-symplectic  Lie group with a nontrivial $G$-action.
\end{proposition}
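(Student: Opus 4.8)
The plan is to construct the action one group element at a time, exploiting that $Q$ is simply connected, and to handle joint smoothness as a separate (and the only genuinely delicate) step. First, fix $g\in G$. The automorphism $\rho_g\in\mathrm{Aut}(\frak{q},\omega_e)$ is in particular a homomorphism of quasi-Frobenius Lie algebras $(\frak{q},\omega_e)\to(\frak{q},\omega_e)$, and since $\omega$ is left-invariant with value $\omega_e$ at $e$ it coincides with the form $\widetilde{\omega_e}$ of Proposition \ref{SLG3}. As $Q$ is simply connected, Proposition \ref{SLG6a} supplies a unique symplectic Lie group homomorphism $\widehat{\rho}_g\colon(Q,\omega)\to(Q,\omega)$ with $(\widehat{\rho}_g)_{\ast,e}=\rho_g$. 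Comparing $\widehat{\rho}_g\circ\widehat{\rho}_{g^{-1}}$ and $\mathrm{id}_Q$ — both symplectic Lie group endomorphisms of $(Q,\omega)$ with differential $\mathrm{id}_{\frak{q}}$ at $e$ — the uniqueness clause (equivalently, simple connectedness of $Q$) forces $\widehat{\rho}_g\circ\widehat{\rho}_{g^{-1}}=\mathrm{id}_Q$, and symmetrically; hence each $\widehat{\rho}_g$ is an isomorphism of symplectic Lie groups.

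Next I would verify that $g\mapsto\widehat{\rho}_g$ is a group homomorphism. Since $\rho$ is one, $\widehat{\rho}_g\circ\widehat{\rho}_h$ and $\widehat{\rho}_{gh}$ are Lie group homomorphisms $Q\to Q$ with the same differential $\rho_g\rho_h=\rho_{gh}$ at $e$, hence equal by simple connectedness; likewise $\widehat{\rho}_e=\mathrm{id}_Q$. Granting the smoothness of $\widehat{\rho}\colon G\times Q\to Q$, $(g,q)\mapsto\widehat{\rho}_g(q)$, it follows that $\widehat{\rho}$ is a left $G$-action by symplectic Lie group automorphisms, i.e.\ $(Q,\omega,\widehat{\rho})$ is a $G$-symplectic Lie group with $(\widehat{\rho}_g)_{\ast,e}=\rho_g$. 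Uniqueness of $\widehat{\rho}$ is then immediate: any action with these properties must have each $\widehat{\rho}_g$ equal to the unique symplectic Lie group homomorphism of Proposition \ref{SLG6a}.

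The substantive point — and the one I expect to be the main obstacle — is the joint smoothness of $(g,q)\mapsto\widehat{\rho}_g(q)$, since Proposition \ref{SLG6a} yields smoothness only in $q$ for each fixed $g$. I would exploit that a Lie group homomorphism intertwines exponential maps, so $\widehat{\rho}_g(\exp_Q v)=\exp_Q(\rho_g v)$ for every $v\in\frak{q}$. Picking a neighborhood $U$ of $0\in\frak{q}$ on which $\exp_Q$ restricts to a diffeomorphism onto a neighborhood of $e$, the formula $\widehat{\rho}_g(q)=\exp_Q\big(\rho_g(\exp_Q^{-1}q)\big)$ shows $\widehat{\rho}$ is smooth on $G\times\exp_Q(U)$, because $\rho\colon G\to GL(\frak{q})$ and $\exp_Q$ are smooth. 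For an arbitrary $q_0\in Q$, connectedness of $Q$ gives a fixed factorization $q_0=\exp_Q(v_1)\cdots\exp_Q(v_k)$; then for $q$ near $q_0$ one has $q=q_0\cdot h$ with $h=q_0^{-1}q$ near $e$, and the homomorphism property gives
\[
\widehat{\rho}_g(q)=\exp_Q(\rho_g v_1)\cdots\exp_Q(\rho_g v_k)\cdot\exp_Q\big(\rho_g(\exp_Q^{-1}(q_0^{-1}q))\big),
\]
which is manifestly smooth in $(g,q)$ on $G\times(\text{a neighborhood of }q_0)$, since multiplication and inversion in $Q$, $\exp_Q$, the local inverse of $\exp_Q$ near $e$, and $\rho$ are all smooth. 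Since $q_0$ was arbitrary, $\widehat{\rho}$ is smooth on all of $G\times Q$.

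Finally, for the last assertion of the proposition, apply the construction with $\rho$ taken to be the inclusion $G\hookrightarrow\mathrm{Aut}(\frak{q},\omega_e)$. The resulting action $\widehat{\rho}$ is nontrivial: if $g\neq e$ then $(\widehat{\rho}_g)_{\ast,e}=\rho_g=g\neq\mathrm{id}_{\frak{q}}$, which forces $\widehat{\rho}_g\neq\mathrm{id}_Q$.
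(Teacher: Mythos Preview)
Your proof is correct and follows essentially the same route as the paper: invoke Proposition \ref{SLG6a} to lift each $\rho_g$ to a symplectic Lie group homomorphism $\widehat{\rho}_g$, use connectedness of $Q$ to get $\widehat{\rho}_g\circ\widehat{\rho}_h=\widehat{\rho}_{gh}$, and then establish joint smoothness via naturality of the exponential map on a neighborhood of $e$ together with a factorization $q_0=\exp(v_1)\cdots\exp(v_k)$ to propagate smoothness globally. Your treatment is slightly more explicit in two places (you check that each $\widehat{\rho}_g$ is an isomorphism and you spell out the ``in particular'' clause), but the argument is otherwise the paper's.
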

\begin{proof}
Let $\rho: G\rightarrow \mbox{Aut}(\frak{q},\omega_e)$, $g\mapsto \rho_g$ be a Lie group homomorphism.  Since $Q$ is simply connected and $\rho_g\in \mbox{Aut}(\frak{q},\omega_e)$ for all $g\in G$, it follows from Proposition \ref{SLG6a} that there exists a unique homomorphism of symplectic Lie groups
\begin{equation}
\nonumber
\widehat{\rho}_g: (Q,\omega)\rightarrow (Q,\omega)
\end{equation}
such that $(\widehat{\rho}_g)_{\ast,e}=\rho_g$ for all $g\in G$.  Furthermore, for $g,h\in G$, we have
\begin{align}
\nonumber
(\widehat{\rho}_g\circ \widehat{\rho}_h)_{\ast,e}&=(\widehat{\rho}_g)_{\ast,e}\circ (\widehat{\rho}_h)_{\ast,e}\\
\nonumber
&=\rho_g\circ \rho_h\\
\nonumber
&=\rho_{gh}\\
\label{SLGDrin10e0}
&=(\widehat{\rho}_{gh})_{\ast,e}.
\end{align}
Since $\widehat{\rho}_g\circ \widehat{\rho}_h$ and $\widehat{\rho}_{gh}$ are Lie group homomorphisms and $Q$ is connected, equation (\ref{SLGDrin10e0}) implies that 
\begin{equation}
\label{SLGDrin10e01}
\widehat{\rho}_g\circ \widehat{\rho}_h=\widehat{\rho}_{gh}.
\end{equation}
Hence, 
\begin{equation}
\nonumber
\widehat{\rho}: G\times Q\rightarrow Q,\hspace*{0.1in} (g,q)\mapsto \widehat{\rho}_g(q)
\end{equation}
is a left (not necessarily smooth) $G$-action.  We now show that $\widehat{\rho}$ is smooth.  To do this, set $\widehat{\rho}(g,q)=\widehat{\rho}_g(q)$ for $g\in G$, $q\in Q$ and let $U$ be an open neighborhood of $0\in \frak{q}$ such that 
\begin{equation}
\nonumber
\mbox{exp}|_{U}: U\stackrel{\sim}{\rightarrow} \mbox{exp}(U)
\end{equation}
is a diffeomorphism.  The naturality of the exponential map implies that 
\begin{equation}
\label{SLGDrin10e1}
\widehat{\rho}(g,q)=\mbox{exp}\circ \rho_g\circ (\mbox{exp}\big|_U)^{-1}(q),\hspace*{0.1in}\forall~(g,q)\in G\times \mbox{exp}(U).
\end{equation}
Since the right side of (\ref{SLGDrin10e1}) is smooth on $G\times \mbox{exp}(U)$, it follows that $\widehat{\rho}|_{G\times \mbox{exp}(U)}$ is also smooth.  Now fix an arbitrary element $q_0$ of $Q$ and define
\begin{equation}
\nonumber
f: G\rightarrow Q,~\hspace*{0.1in} g\mapsto \widehat{\rho}(g,q_0).
\end{equation}
We now show that $f$ is smooth. Since $Q$ is connected, $\mbox{exp}(U)$ generates $Q$.  Hence, there exists $q_{0,1},\dots, q_{0,k}\in \mbox{exp}(U)$ such that 
\begin{equation}
\nonumber
q_0=q_{0,1}q_{0,2}\cdots q_{0,k}.
\end{equation}
Since $\widehat{\rho}_g: Q\rightarrow Q$ is a Lie group homomorphism for all $g\in G$, we have
\begin{equation}
\label{SLGDrin10e2}
f(g):=\widehat{\rho}(g,q_0)=\widehat{\rho}(g,q_{0,1})\widehat{\rho}(g,q_{0,2})\cdots \widehat{\rho}(g,q_{0,k})\in Q.
\end{equation}
Since $(g,q_{0,i})\in G\times  \mbox{exp}(U)$ for $i=1,\dots, k$, it follows that the right side of (\ref{SLGDrin10e2}) depends smoothly on $g$.  Hence, $f$ is smooth.   Now, for all $(g,q)\in G\times (q_0\mbox{exp}(U))$, we have
\begin{align}
\nonumber
\widehat{\rho}(g,q)&=\widehat{\rho}(g,q_0q_0^{-1}q)\\
\nonumber
&=\widehat{\rho}(g,q_0)\widehat{\rho}(g,q_0^{-1}q)\\
\label{SLGDrin10e3}
&=f(g)[(\widehat{\rho}|_{G\times \mbox{exp}(U)})\circ (id_G\times l_{q_0^{-1}})(g,q)],
\end{align}
where $l_{q_0^{-1}}: Q\rightarrow Q$ is left translation by $q_0^{-1}$.  Since $f$ and $\widehat{\rho}|_{G\times \mbox{exp}(U)}$ are both smooth, it follows that the right side of (\ref{SLGDrin10e3}) is smooth on $G\times (q_0\mbox{exp}(U))$.  Hence, $\widehat{\rho}|_{G\times (q_0\mbox{exp}(U))}$ is smooth.  Since $q_0\in Q$ is arbitrary, it follows that $\widehat{\rho}$ is smooth on $G\times Q$.  This completes the proof.
\end{proof}

\noindent We now illustrate Proposition \ref{GSLG7} with a simple example:

\begin{example}
\label{GSLG5}
Let $Q$ be the 2-dimensional non-abelian Lie group
\begin{equation}
\label{SLGDrin5e1}
Q=\left\{
\left(\begin{array}{cc}
a & b\\
0 & 1
\end{array}
\right)~\big| ~a>0, ~b\in \mathbb{R}
\right\}.
\end{equation}
Note that $Q$ is simply connected, being diffeomorphic to $\mathbb{R}_+\times \mathbb{R}$.  The associated Lie algebra is 
\begin{equation}
\label{SLGDrin5e2}
\frak{q}=\left\{
\left(\begin{array}{cc}
\overline{a} & \overline{b}\\
0 & 0
\end{array}
\right)~\big| ~\overline{a},~\overline{b}\in \mathbb{R}
\right\}.
\end{equation}
A convenient basis for $\frak{q}$ is then 
\begin{equation}
\label{SLGDrin5e3}
e_1=
\left(\begin{array}{cc}
1 & 0\\
0 & 0
\end{array}
\right),\hspace*{0.2in} e_2=\left(\begin{array}{cc}
0 & 1\\
0 & 0
\end{array}
\right),
\end{equation}
where we note that 
\begin{equation}
\label{SLGDrin5e4}
[e_1,e_2]=e_2.
\end{equation}
Let $\alpha: \frak{q}\rightarrow \mathbb{R}$ be the linear map defined by $\alpha(e_1)=0$ and $\alpha(e_2)=1$.  Then $(\frak{q},\alpha)$ is a Frobenius Lie algebra.  Let $\widetilde{\beta}$ be the left-invariant symplectic form on $Q$ defined by $\widetilde{\beta}_e=\beta$, where $\beta(u,v):=\alpha([u,v])$ for $u,v\in \frak{q}$.  

For $\lambda\in \mathbb{R}$, let $\rho_\lambda: \frak{q}\rightarrow \frak{q}$ be the linear isomorphism defined by
\begin{equation}
\nonumber
\rho_\lambda(e_1):=e_1+\lambda e_2,\hspace*{0.1in} \rho_\lambda(e_2):=e_2.
\end{equation}
Then it is a straightforward exercise to show that $\rho_\lambda\in Aut(\frak{q},\omega_e)$ and
\begin{equation}
\nonumber
\rho: \mathbb{R}\stackrel{\sim}{\rightarrow} Aut(\frak{q},\omega_e),\hspace*{0.1in} \lambda \mapsto\rho_\lambda
\end{equation}
is a Lie group isomorphism.  Proposition \ref{GSLG7} implies that $(Q,\omega)$ admits the structure of an $\mathbb{R}$-symplectic Lie group with unique action $\widehat{\rho}: \mathbb{R}\times Q\rightarrow Q$ satisfying $(\widehat{\rho}_\lambda)_{\ast,e}=\rho_\lambda$.  

We now compute the action $\widehat{\rho}$ explicitly. Let $u\in \frak{q}$.  Then 
\begin{equation}
\label{SLGDrin8e2}
u=\overline{a} e_1+\overline{b} e_2=\left(\begin{array}{cc}
\overline{a} & \overline{b}\\
0 & 0
\end{array}
\right)
\end{equation}
for some $a,b\in \mathbb{R}$.  Using the naturality of the exponential map, we have
\begin{equation}
\label{SLGDrin8e3}
\widehat{\rho}_\lambda\circ \mbox{exp}(u)=\mbox{exp}\circ \rho_\lambda(u).
\end{equation}
A direct calculation shows that 
\begin{equation}
\label{SLGDrin8e4}
\mbox{exp}(u)=\left(\begin{array}{cc}
e^{\overline{a}} & \mu(\overline{a})\overline{b}\\
0 & 1
\end{array}\right),
\end{equation}
where $\mu: \mathbb{R}\rightarrow \mathbb{R}_+$ is the nonzero smooth function given by $\mu(t)=\frac{1}{t}(e^t-1)$ for $t\neq 0$ and $\mu(0)=1$.   Note that every element of $Q$ is in the image of the exponential map.  Indeed, given 
\begin{equation}
\nonumber
q=\left(\begin{array}{cc}
a & b\\
0 & 1
\end{array}\right)
\end{equation}
for $a>0$, $b\in \mathbb{R}$, one simply sets $\overline{a}=\ln a$ and $\overline{b}=b/\mu(\ln a)$  in (\ref{SLGDrin8e4}) to obtain $\mbox{exp}(u)=q$.  The left side of (\ref{SLGDrin8e3}) is 
\begin{equation}
\label{SLGDrin8e5}
\mbox{exp}\circ \rho_\lambda(u)=\mbox{exp}\left(\begin{array}{cc}
\overline{a} & \lambda \overline{a} + \overline{b}\\
0 & 0
\end{array}\right)=\left(\begin{array}{cc}
e^{\overline{a}} & \mu(\overline{a})(\lambda \overline{a} + \overline{b})\\
0 & 1
\end{array}\right).
\end{equation}
Hence,
\begin{equation}
\label{SLGDrin8e6}
\widehat{\rho}_\lambda \left(\begin{array}{cc}
e^{\overline{a}} & \mu(\overline{a})\overline{b}\\
0 & 1
\end{array}\right)=\left(\begin{array}{cc}
e^{\overline{a}} & \mu(\overline{a})(\lambda \overline{a} + \overline{b})\\
0 & 1
\end{array}\right).
\end{equation}
Setting $\overline{a}=\ln a$ and $\overline{b}=b/\mu(\ln a)$ for $a>0$ and $b\in \mathbb{R}$, we obtain
\begin{equation}
\label{SLGDrin8e7}
\widehat{\rho}_\lambda \left(\begin{array}{cc}
a & b\\
0 & 1
\end{array}\right)=\left(\begin{array}{cc}
a & \lambda(a-1)+b\\
0 & 1
\end{array}\right).
\end{equation}
Since $(Q,\omega,\varphi)$ is an $\mathbb{R}$-symplectic Lie group by Proposition \ref{GSLG7}, $\widehat{\rho}_\lambda$ is both a Lie group isomorphism and a symplectomorphism of $(Q,\omega)$ which satisfies $(\widehat{\rho}_\lambda)_{\ast,e}=\rho_\lambda$. 
\end{example}

\noindent In anticipation of the next section, we introduce the following definition:

\begin{definition}
\label{GSLG8}
Let $(Q,\omega,\varphi)$ and $(R,\tau,\chi)$ be $G$-symplectic Lie groups.  A homomorphism of $G$-symplectic Lie groups from $(Q,\omega,\varphi)$ to $(R,\tau,\chi)$ is a homomorphism 
\begin{equation}
\nonumber
\Psi: (Q,\omega)\rightarrow (R,\tau)
\end{equation}
of symplectic Lie groups which is also $G$-equivariant, that is, $\Psi(\varphi_g(q))=\chi_g(\Psi(q))$ for all $g\in G$ and $q\in Q$.
\end{definition}

\subsection{The Equivalence}
\noindent In this section, we show that the category of finite dimensional $\frak{g}$-quasi-Frobenius Lie algebras is equivalent to the category of simply connected $G$-symplectic Lie groups, where $G$ is also simply connected.   We begin with the following result.

\begin{proposition}
\label{GSLG9}
Let $\Psi: (Q,\omega,\varphi)\rightarrow (R,\tau,\chi)$ be a homomorphism of $G$-symplectic Lie groups.  Then 
\begin{equation}
\nonumber
\Psi_{\ast,e}: (\frak{q},\omega_e,\varphi'')\rightarrow (\frak{r},\tau_e,\chi'')
\end{equation}
is a homomorphism of $\frak{g}$-quasi-Frobenius Lie algebras, where $\varphi''$ and $\chi''$ are defined as in Proposition \ref{GSLG2}.
\end{proposition}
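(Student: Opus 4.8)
The plan is to check directly the two conditions in Definition \ref{gqFLA14}: that $\Psi_{\ast,e}$ is a homomorphism of the underlying quasi-Frobenius Lie algebras, and that it is $\frak{g}$-equivariant for the actions $\varphi''$ and $\chi''$. The first condition is immediate: since $\Psi:(Q,\omega)\to (R,\tau)$ is in particular a homomorphism of symplectic Lie groups, Proposition \ref{SLG6} gives that $\Psi_{\ast,e}:(\frak{q},\omega_e)\to(\frak{r},\tau_e)$ is a homomorphism of quasi-Frobenius Lie algebras. So the entire content of the proof is the equivariance statement.

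For equivariance, I would begin from the $G$-equivariance of $\Psi$ built into Definition \ref{GSLG8}, namely $\Psi\circ\varphi_g=\chi_g\circ\Psi$ for all $g\in G$. Applying $(\,\cdot\,)_{\ast,e}$ to this identity of smooth maps $Q\to R$ and using the chain rule, together with $\varphi_g(e)=e$ and the fact that $\Psi(e)=e$ because $\Psi$ is a group homomorphism (so the base point of the differential of $\chi_g$ is $e$), one obtains
\[
\Psi_{\ast,e}\circ(\varphi_g)_{\ast,e}=(\chi_g)_{\ast,e}\circ\Psi_{\ast,e},
\]
that is, $\Psi_{\ast,e}\circ\varphi'_g=\chi'_g\circ\Psi_{\ast,e}$ for every $g\in G$, where $\varphi'$ and $\chi'$ are the $G$-representations of Proposition \ref{GSLG2}.

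Next I would pass to the infinitesimal level. Fix $x\in\frak{g}$, substitute $g=\mbox{exp}(tx)$ into the last identity, and differentiate at $t=0$. Since $\Psi_{\ast,e}$ is a fixed linear map it commutes with $\frac{d}{dt}\big|_{t=0}$, so
\[
\Psi_{\ast,e}\circ\varphi''_x=\chi''_x\circ\Psi_{\ast,e},\qquad\forall\,x\in\frak{g},
\]
recalling that $\varphi''_x=\frac{d}{dt}\big|_{t=0}\varphi'_{\mbox{exp}(tx)}$ and likewise for $\chi''$. This is exactly the $\frak{g}$-equivariance required by Definition \ref{gqFLA14}, and combined with the first paragraph it shows that $\Psi_{\ast,e}$ is a homomorphism of $\frak{g}$-quasi-Frobenius Lie algebras.

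I do not anticipate any real obstacle: the argument is a short assembly of Proposition \ref{SLG6} with the differentiated equivariance relation. The only points deserving a line of care are the chain-rule bookkeeping (the base point $\Psi(e)$ of $(\chi_g)_{\ast,\Psi(e)}$ collapses to $e$ precisely because $\Psi$ is a group homomorphism) and the routine observation that differentiation in the $G$-parameter commutes with pre- and post-composition by the fixed linear operator $\Psi_{\ast,e}$.
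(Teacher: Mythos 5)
Your proposal is correct and follows essentially the same route as the paper's own proof: invoke Proposition \ref{SLG6} for the quasi-Frobenius part, differentiate the relation $\Psi\circ\varphi_g=\chi_g\circ\Psi$ at $e$ to get $\Psi_{\ast,e}\circ\varphi'_g=\chi'_g\circ\Psi_{\ast,e}$, then set $g=\mbox{exp}(tx)$ and differentiate at $t=0$. Your extra remarks on the base-point bookkeeping ($\varphi_g(e)=e$, $\Psi(e)=e$) are accurate details the paper leaves implicit.
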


\begin{proof}
By definition, $\Psi: (Q,\omega)\rightarrow (R,\tau)$ is a homomorphism of symplectic Lie groups.  This implies that 
\begin{equation}
\nonumber
\Psi_{\ast,e}: (\frak{q},\omega_e)\rightarrow (\frak{r},\tau_e)
\end{equation}
is a homomorphism of quasi-Frobenius Lie algebras.  It only remains to show that $\Psi_{\ast,e}$ is $\frak{g}$-equivariant.   Since $\Psi$ is $G$-equivariant, we have
\begin{equation}
\nonumber
\Psi\circ \varphi_{g}=\chi_{g}\circ \Psi,\hspace*{0.1in} \forall~g\in G.
\end{equation}
This in turn implies that 
\begin{equation}
\label{GSLG9e1}
\Psi_{\ast,e}\circ \varphi'_g=\chi'_g\circ \Psi_{\ast,e},\hspace*{0.1in} \forall~g\in G,
\end{equation}
where $\varphi'_g:=(\varphi_g)_{\ast,e}:\frak{q}\rightarrow \frak{q}$ and $\chi'_g:=(\chi_g)_{\ast,e}:\frak{r}\rightarrow \frak{r}$.  Let $x\in \frak{g}$ and set $g=\mbox{exp}(tx)$ in (\ref{GSLG9e1}).  Applying $\frac{d}{dt}\big|_{t=0}$ to both sides then gives
\begin{equation}
\label{GSLG9e1}
\Psi_{\ast,e}\circ \varphi''_x=\chi''_x\circ \Psi_{\ast,e}.
\end{equation}
This in turn completes the proof.
\end{proof}

\begin{lemma}
\label{GSLG10}
Let $(\frak{q},\beta,\phi)$ be a $\frak{g}$-quasi-Frobenius Lie algebra and let $G$ be the simply connected Lie group whose Lie algebra is $\frak{g}$.  Then there exists a unique Lie group homomorphism $f: G\rightarrow GL(\frak{q})$, $g\mapsto f_g$ such that $f_{\ast,e}=\phi$ and $f_g\in Aut(\frak{q},\beta)$ for all $g\in G$.
\end{lemma}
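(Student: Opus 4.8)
The plan is to produce $f$ from the standard integration theorem for Lie algebra homomorphisms and then argue, using the two axioms of a $\frak{g}$-quasi-Frobenius Lie algebra, that $f$ automatically takes values in $\mbox{Aut}(\frak{q},\beta)$. Concretely, since $\phi\colon\frak{g}\to\frak{gl}(\frak{q})$ is a representation of $\frak{g}$, it is in particular a homomorphism of Lie algebras, so (because $G$ is simply connected) there is a unique Lie group homomorphism $f\colon G\to GL(\frak{q})$ with $f_{\ast,e}=\phi$. Uniqueness of a homomorphism with the properties claimed in the lemma is then free: any Lie group homomorphism out of the connected group $G$ is determined by its differential, so it must equal this $f$. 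Everything therefore reduces to showing that $f_g\in\mbox{Aut}(\frak{q},\beta)$ for all $g\in G$.

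By Proposition \ref{gqFLA11}, $H:=\mbox{Aut}(\frak{q},\beta)=\mbox{Aut}(\frak{q})\cap\mbox{Sp}(\frak{q},\beta)$ is an embedded Lie subgroup of $GL(\frak{q})$; let $\frak{h}\subseteq\frak{gl}(\frak{q})$ denote its Lie algebra. I would identify $\frak{h}$ by differentiating the defining relations along one-parameter subgroups of $H$: a curve $\rho_t$ in $H$ satisfies $\rho_t([u,v])=[\rho_t(u),\rho_t(v)]$ and $\beta(\rho_t(u),\rho_t(v))=\beta(u,v)$, and differentiating at $t=0$ shows that $\frak{h}$ is exactly the set of $X\in\frak{gl}(\frak{q})$ that are derivations of $\frak{q}$ and satisfy $\beta(X(u),v)+\beta(u,X(v))=0$ for all $u,v\in\frak{q}$. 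Conditions (i) and (ii) of Definition \ref{gqFLA8} assert precisely that every $\phi_x$ has these two properties, so $\phi(\frak{g})\subseteq\frak{h}$.

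Now I finish by a connectedness argument. Since $G$ is connected it is generated by $\exp(\frak{g})$, so $f(G)$ is generated by the elements $f(\exp x)=\exp(\phi_x)$ with $x\in\frak{g}$. As $H$ is an embedded Lie subgroup of $GL(\frak{q})$ and $\phi_x\in\frak{h}$, the exponential $\exp(\phi_x)$ computed in $GL(\frak{q})$ agrees with the exponential of $\phi_x$ in $H$ and so lies in $H$; since $H$ is a subgroup, the group generated by all such elements is contained in $H$. Hence $f(G)\subseteq H=\mbox{Aut}(\frak{q},\beta)$, i.e.\ $f_g\in\mbox{Aut}(\frak{q},\beta)$ for every $g\in G$, which completes the argument.

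The main obstacle is the middle step: correctly computing $\mbox{Lie}(\mbox{Aut}(\frak{q},\beta))$ and recognizing that the two defining conditions of a $\frak{g}$-quasi-Frobenius Lie algebra are exactly the infinitesimal forms of ``$\rho$ is a Lie algebra automorphism of $\frak{q}$'' and ``$\rho$ preserves $\beta$''. Everything after that is the standard fact that a connected Lie group maps into any Lie subgroup whose Lie algebra contains the image of the differential of the homomorphism. If one prefers to avoid the Lie-subgroup bookkeeping, the same conclusion can be reached by a direct computation: fixing $x\in\frak{g}$ and $u,v\in\frak{q}$ and putting $g=\exp(tx)$, one checks that $t\mapsto f_{\exp(tx)}([u,v])-[f_{\exp(tx)}(u),f_{\exp(tx)}(v)]$ and $t\mapsto\beta(f_{\exp(tx)}(u),f_{\exp(tx)}(v))-\beta(u,v)$ solve linear ODEs with zero initial condition and therefore vanish identically; but the route above reuses Proposition \ref{gqFLA11} more directly.
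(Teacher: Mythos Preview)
Your argument is correct, and it takes a genuinely different route from the paper's proof. One small expository point: differentiating curves in $H$ only gives the inclusion $\frak{h}\subseteq\mbox{Der}(\frak{q})\cap\frak{sp}(\frak{q},\beta)$, whereas what you actually need in the next paragraph is the reverse inclusion (so that $\phi_x\in\frak{h}$ and hence $\exp(\phi_x)\in H$). This is of course standard---$\mbox{Lie}(\mbox{Aut}(\frak{q}))=\mbox{Der}(\frak{q})$, $\mbox{Lie}(\mbox{Sp}(\frak{q},\beta))=\frak{sp}(\frak{q},\beta)$, and the Lie algebra of an intersection of closed subgroups is the intersection of the Lie algebras---but you should say so rather than claim that a one-sided differentiation yields equality.

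The paper does not pass through $\mbox{Lie}(\mbox{Aut}(\frak{q},\beta))$ at all. Instead it carries out, in a self-contained way, the direct computation you sketch in your closing remark: for fixed $x\in\frak{g}$ it sets $f_t:=f_{\exp(tx)}$, defines $A(t,u,v)=\beta(f_t(u),f_t(v))-\beta(u,v)$, and uses the additive identity $A(t+s,u,v)=A(t,f_s(u),f_s(v))+A(s,u,v)$ together with $\tfrac{d}{dt}\big|_{t=0}A=0$ (which is condition~(ii)) to conclude $\tfrac{d}{dt}\big|_{t=s}A=0$ for every $s$, hence $A\equiv 0$. The bracket condition is handled by an analogous function $B(t,u,v,w)=\beta([f_t(u),f_t(v)]-f_t([u,v]),f_t(w))$, pairing against $\beta$ and using the already-established $\beta$-invariance of $f_t$; nondegeneracy of $\beta$ then forces $f_t([u,v])=[f_t(u),f_t(v)]$. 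Finally connectedness of $G$ passes from $f_{\exp(x)}$ to all $f_g$. Your approach is shorter and more conceptual, leveraging Proposition~\ref{gqFLA11} and standard Lie-subgroup facts; the paper's approach is more elementary and avoids appealing to the computation of $\mbox{Lie}(\mbox{Aut}(\frak{q},\beta))$, at the cost of the bookkeeping with $A$ and $B$.
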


\begin{proof}
Since $G$ is simply connected and $\phi: \frak{g}\rightarrow \frak{gl}(\frak{q})$ is a Lie algebra map, there exists a unique Lie group homomorphism $f: G\rightarrow GL(\frak{q})$ such that $f_{\ast,e}=\phi$.  We now show that $f_g\in Aut(\frak{q},\beta)$ for all $g\in G$.  Fix $x\in \frak{g}$.  To simplify notation, let 
\begin{equation}
\label{GSLG10e2}
f_t:=f_{\mbox{exp}(tx)}: \frak{q}\rightarrow \frak{q}.
\end{equation}
Define $A:  \mathbb{R}\times  \frak{q}\times \frak{q}\rightarrow \mathbb{R}$ by
\begin{equation}
\label{GSLG10e1}
A(t,u,v):=\beta(f_t(u),f_t(v))-\beta(u,v).
\end{equation}
Since $f_{\ast,e}=\phi$ and $(\frak{q},\beta,\phi)$ is a $\frak{g}$-quasi-Frobenius Lie algebra, we have
\begin{equation}
\label{GSLG10e2}
\frac{d}{dt}\big|_{t=0}A(t,u,v)=\beta(\phi_x(u),v)+\beta(u,\phi_x(v))=0,\hspace*{0.1in}\forall ~u,v\in \frak{q}.
\end{equation}
Furthermore, since $f$ is a group homomorphism and 
\begin{equation}
\nonumber
 \mbox{exp}((t+s)x)=\mbox{exp}(tx)\mbox{exp}(sx),
\end{equation}
we have
\begin{equation}
\label{GSLG10e3}
A(t+s,u,v)=A(t,f_s(u),f_s(v))+A(s,u,v),\hspace*{0.1in}\forall ~u,v\in \frak{q}.
\end{equation}
 Equations (\ref{GSLG10e2}) and (\ref{GSLG10e3}) imply
 \begin{equation}
 \label{GSLG10e4}
 \frac{d}{dt}\big|_{t=s}A(t,u,v)=\frac{d}{dt}\big|_{t=0}A(t+s,u,v)=0+0=0.
 \end{equation}
Hence, for fixed $u,v\in \frak{q}$, $A(t,u,v)$ is a constant.  Since $A(0,u,v)=0$, it follows that $A(t,u,v)=0$ for all $t\in \mathbb{R}$.  Hence,
\begin{equation}
 \label{GSLG10e5}
 \beta(f_t(u),f_t(v))=\beta(u,v),\hspace*{0.1in} \forall ~t\in \mathbb{R}.
\end{equation}
In particular,
\begin{equation}
 \label{GSLG10e5a}
 \beta(f_{\mbox{exp}(x)}(u),f_{\mbox{exp}(x)}(v))=\beta(u,v).
\end{equation}

Now define $B: \mathbb{R}\times \frak{q}\times \frak{q}\times \frak{q}\rightarrow \frak{q}$ by
\begin{equation}
 \label{GSLG10e6}
B(t,u,v,w)=\beta([f_t(u),f_t(v)]-f_t([u,v]),f_t(w)).
\end{equation}
Equation (\ref{GSLG10e5}) implies that
\begin{equation}
 \label{GSLG10e7}
B(t,u,v,w)=\beta([f_t(u),f_t(v)],f_t(w))-\beta([u,v],w).
\end{equation}
Using (\ref{GSLG10e7}) and the fact that $(\frak{q},\beta,\phi)$ is a $\frak{g}$-quasi-Frobenius Lie algebra, we have
\begin{align}
\nonumber
\frac{d}{dt}\big|_{t=0} B(t,u,v,w)&=\beta([\phi_x(u),v],w)+\beta([u,\phi_x(v)],w)+\beta([u,v],\phi_x(w))\\
\nonumber
&=\beta(\phi_x([u,v]),w)+\beta([u,v],\phi_x(w))\\
 \label{GSLG10e8}
&=0,\hspace*{0.1in}\forall~u,v,w.
\end{align}
From (\ref{GSLG10e7}), we also have
\begin{align}
 \label{GSLG10e9}
 &B(t+s,u,v,w)=\beta([f_t(f_s(u)),f_t(f_s(v))],f_t(f_s(w)))-\beta([u,v],w)\\
 \nonumber
 &=B(t,f_s(u),f_s(v),f_s(w))+\beta([f_s(u),f_s(v)],f_s(w))-\beta([u,v],w)
 \end{align}
 Equations (\ref{GSLG10e8}) and (\ref{GSLG10e9}) now imply
 \begin{equation}
 \label{GSLG10e10}
 \frac{d}{dt}\big|_{t=s}B(t,u,v,w)= \frac{d}{dt}\big|_{t=0}B(t+s,u,v,w)=0+0+0=0.
 \end{equation}
From (\ref{GSLG10e10}), it follows that for fixed $u,v,w$, $B(t,u,v,w)$ is a constant for all $t\in \mathbb{R}$.  Hence, $B(t,u,v,w)=B(0,u,v,w)=0$ for all $t\in \mathbb{R}$ and $u,v,w\in \frak{q}$. In particular,
 \begin{equation}
 \label{GSLG10e11}
B(1,u,v,w)=\beta([f_1(u),f_1(v)]-f_1([u,v]),f_1(w))=0,\hspace*{0.1in} \forall~u,v,w\in \frak{q}.
\end{equation}
 Since $\beta$ is non-degenerate and $f_1:=f_{\mbox{exp}(x)}\in GL(\frak{q})$, it follows that
 \begin{equation}
 \label{GSLG10e12}
 f_{\mbox{exp}(x)}([u,v])=[ f_{\mbox{exp}(x)}(u), f_{\mbox{exp}(x)}(v)].
 \end{equation}
 
 Since $G$ is connected, $x\in \frak{g}$ is arbitrary, and $f$ is a group homomorphism, equations (\ref{GSLG10e5a}) and (\ref{GSLG10e12}) imply that 
 \begin{align}
  \label{GSLG10e13}
  \beta(f_g(u),f_g(v))=\beta(u,v),\hspace*{0.1in} f_g([u,v])=[f_g(u),f_g(v)]
 \end{align}
 for all $g\in G$.  Hence, $f_g\in \mbox{Aut}(\frak{q},\beta)$ for all $g\in G$.  This completes the proof.
 \end{proof}

\begin{proposition}
\label{GSLG11}
Let $(\frak{q},\beta,\phi)$ be a $\frak{g}$-quasi-Frobenius Lie algebra.  Let $G$ and $Q$ be the simply connected Lie groups associated to $\frak{g}$ and $\frak{q}$ respectively and let $\widetilde{\beta}\in \Omega^2(Q)$ be the left-invariant 2-form associated to $\beta$.  Then there exists a unique left action $\overline{\phi}: G\times Q\rightarrow Q$ such that $(Q,\widetilde{\beta},\overline{\phi})$ is a $G$-symplectic Lie group whose associated $\frak{g}$-quasi-Frobenius Lie algebra is 
\begin{equation}
\nonumber
(\frak{q},\widetilde{\beta}_e,\overline{\phi}'')=(\frak{q},\beta,\phi),
\end{equation}
where $\overline{\phi}''$ is defined as in Proposition \ref{GSLG2}.
\end{proposition}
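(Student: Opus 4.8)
The plan is to assemble this statement from Lemma \ref{GSLG10} and Proposition \ref{GSLG7}, treating $f$ from the former as the input $\rho$ of the latter. First I would invoke Lemma \ref{GSLG10} to obtain the unique Lie group homomorphism $f: G\rightarrow GL(\frak{q})$, $g\mapsto f_g$, with $f_{\ast,e}=\phi$ and $f_g\in \mbox{Aut}(\frak{q},\beta)$ for all $g\in G$. By Proposition \ref{SLG3} the left-invariant form $\widetilde{\beta}$ satisfies $\widetilde{\beta}_e=(l_{e})^\ast\beta=\beta$, so $f$ is precisely a Lie group homomorphism $G\rightarrow \mbox{Aut}(\frak{q},\widetilde{\beta}_e)$, and $(Q,\widetilde{\beta})$ is a simply connected symplectic Lie group.

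Next I would apply Proposition \ref{GSLG7} to $(Q,\widetilde{\beta})$ and the homomorphism $f$, which yields a unique smooth left $G$-action $\overline{\phi}:=\widehat{f}: G\times Q\rightarrow Q$ such that $(Q,\widetilde{\beta},\overline{\phi})$ is a $G$-symplectic Lie group and $(\overline{\phi}_g)_{\ast,e}=f_g$ for all $g\in G$. To identify the associated $\frak{g}$-quasi-Frobenius Lie algebra, I would simply unwind the notation of Proposition \ref{GSLG2}: by construction $\overline{\phi}'_g:=(\overline{\phi}_g)_{\ast,e}=f_g$, so $\overline{\phi}'=f$ as maps $G\rightarrow GL(\frak{q})$, and hence $\overline{\phi}''=(\overline{\phi}')_{\ast,e}=f_{\ast,e}=\phi$. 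Combined with $\widetilde{\beta}_e=\beta$ this gives $(\frak{q},\widetilde{\beta}_e,\overline{\phi}'')=(\frak{q},\beta,\phi)$, as required.

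For uniqueness, suppose $\overline{\psi}: G\times Q\rightarrow Q$ is another left action making $(Q,\widetilde{\beta},\overline{\psi})$ a $G$-symplectic Lie group with associated $\frak{g}$-quasi-Frobenius Lie algebra $(\frak{q},\beta,\phi)$. Its associated representation $\overline{\psi}': G\rightarrow GL(\frak{q})$ is a Lie group homomorphism with $\overline{\psi}''=(\overline{\psi}')_{\ast,e}=\phi$; since $G$ is simply connected, a Lie group homomorphism out of $G$ is determined by its differential at $e$, so $\overline{\psi}'=f$. Then for each $g\in G$ the map $\overline{\psi}_g:(Q,\widetilde{\beta})\rightarrow (Q,\widetilde{\beta})$ is a symplectic Lie group homomorphism with $(\overline{\psi}_g)_{\ast,e}=f_g=(\overline{\phi}_g)_{\ast,e}$, and since $Q$ is simply connected the uniqueness clause of Proposition \ref{SLG6a} forces $\overline{\psi}_g=\overline{\phi}_g$; hence $\overline{\psi}=\overline{\phi}$.

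The only genuinely delicate point is this uniqueness step: one must observe that the ``associated $\frak{g}$-quasi-Frobenius Lie algebra'' records only the infinitesimal action $\overline{\phi}''$ rather than the full $G$-representation $\overline{\phi}'$, and then use connectedness of $G$ to upgrade $\overline{\phi}''=\phi$ to $\overline{\phi}'=f$ before invoking the rigidity of the simply connected group $Q$. Everything else is a direct citation of Lemma \ref{GSLG10} together with Propositions \ref{SLG3}, \ref{SLG6a}, \ref{GSLG2}, and \ref{GSLG7}, so no substantial new computation is needed.
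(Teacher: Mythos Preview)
Your proof is correct and follows essentially the same route as the paper: invoke Proposition~\ref{SLG3} to get the symplectic Lie group $(Q,\widetilde{\beta})$, apply Lemma~\ref{GSLG10} to lift $\phi$ to the homomorphism $f:G\to\mbox{Aut}(\frak{q},\beta)$, then feed $f$ into Proposition~\ref{GSLG7} and read off $\overline{\phi}''=f_{\ast,e}=\phi$. The only difference is that you spell out the uniqueness argument in full (passing from $\overline{\psi}''=\phi$ to $\overline{\psi}'=f$ via simple connectedness of $G$, then to $\overline{\psi}_g=\overline{\phi}_g$ via connectedness of $Q$), whereas the paper leaves this implicit in the uniqueness clauses of Lemma~\ref{GSLG10} and Proposition~\ref{GSLG7}.
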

\begin{proof}
By Proposition \ref{SLG3}, $(Q,\widetilde{\beta})$ is a symplectic Lie group.  Since $G$ is simply connected, Lemma \ref{GSLG10} shows that there exists a unique Lie group homomorphism 
\begin{equation}
\nonumber
f: G \rightarrow GL(\frak{q}),\hspace*{0.1in} g\mapsto f_g
\end{equation}
such that $f_{\ast,e}=\phi: \frak{g}\rightarrow \frak{gl}(\frak{q})$ and $f_g\in Aut(\frak{q},\beta)$ for all $g\in G$.  Since $Q$ is simply connected, Proposition \ref{GSLG7} shows that there exists a unique smooth left $G$-action 
\begin{equation}
\nonumber
\overline{\phi}: G\times Q\rightarrow Q,\hspace*{0.2in} (g,q)\mapsto \overline{\phi}_g(q)
\end{equation}
such that $(Q,\widetilde{\beta},\overline{\phi})$ is a $G$-symplectic Lie group and $(\overline{\phi}_g)_{\ast,e}=f_g$.  Setting $\overline{\phi}'_g:=(\overline{\phi}_g)_{\ast,e}$ as in Proposition \ref{GSLG2}, we have 
\begin{equation}
\nonumber
\overline{\phi}'':=\overline{\phi}'_{\ast,e}=f_{\ast,e}=\phi.
\end{equation}
This completes the proof.
\end{proof}

\begin{proposition}
\label{GSLG12}
Let $\psi: (\frak{q},\beta,\phi)\rightarrow (\frak{r},\sigma,\mu)$ be a homomorphism of $\frak{g}$-quasi-Frobenius Lie algebras.  Let $G$ be the simply connected Lie group whose Lie algebra is $\frak{g}$ and let $(Q,\widetilde{\beta},\overline{\phi})$ and $(R,\widetilde{\sigma},\overline{\mu})$ be the simply connected $G$-symplectic Lie groups associated to $ (\frak{q},\beta,\phi)$ and $(\frak{r},\sigma,\mu)$ respectively by Proposition \ref{GSLG11}.  Then there exists a unique homomorphism of $G$-symplectic Lie groups
\begin{equation}
\nonumber
\widehat{\psi}: (Q,\widetilde{\beta},\overline{\phi})\rightarrow (R,\widetilde{\sigma},\overline{\mu})
\end{equation}
 such that $\widehat{\psi}_{\ast,e}=\psi$.
\end{proposition}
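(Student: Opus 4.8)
\emph{Proof proposal.} The plan is to produce $\widehat{\psi}$ in two moves: first build the underlying symplectic Lie group homomorphism by the lifting result already available, then upgrade it to a $G$-equivariant map by a connectedness argument. For the first move, observe that $\psi\colon(\frak{q},\beta)\to(\frak{r},\sigma)$ is in particular a homomorphism of quasi-Frobenius Lie algebras and $Q$ is simply connected, so Proposition \ref{SLG6a} furnishes a unique symplectic Lie group homomorphism $\widehat{\psi}\colon(Q,\widetilde{\beta})\to(R,\widetilde{\sigma})$ with $\widehat{\psi}_{\ast,e}=\psi$. The uniqueness clause of that proposition also settles the uniqueness asserted here: any $G$-symplectic Lie group homomorphism with differential $\psi$ at $e$ is, a fortiori, a symplectic Lie group homomorphism with that differential, hence equals $\widehat{\psi}$. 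So the only thing left is the $G$-equivariance $\widehat{\psi}\circ\overline{\phi}_g=\overline{\mu}_g\circ\widehat{\psi}$ for all $g\in G$.

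To organize this, write $f_g:=(\overline{\phi}_g)_{\ast,e}\in\mbox{Aut}(\frak{q},\beta)$ and $h_g:=(\overline{\mu}_g)_{\ast,e}\in\mbox{Aut}(\frak{r},\sigma)$; by the construction in Proposition \ref{GSLG11} (via Lemma \ref{GSLG10}) these assemble into Lie group homomorphisms $f\colon G\to GL(\frak{q})$ and $h\colon G\to GL(\frak{r})$ with $f_{\ast,e}=\phi$ and $h_{\ast,e}=\mu$. For a fixed $g\in G$, both $\widehat{\psi}\circ\overline{\phi}_g$ and $\overline{\mu}_g\circ\widehat{\psi}$ are Lie group homomorphisms $Q\to R$, being composites of such; since $Q$ is connected, they coincide if and only if their differentials at $e$ agree, and those differentials are $\psi\circ f_g$ and $h_g\circ\psi$ respectively. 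Hence the desired equivariance is equivalent to the purely Lie-theoretic identity $\psi\circ f_g=h_g\circ\psi$ for all $g\in G$.

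To prove that identity, note that naturality of the exponential map gives $f_{\exp x}=e^{\phi_x}$ and $h_{\exp x}=e^{\mu_x}$ (matrix exponentials in $GL(\frak{q})$ and $GL(\frak{r})$) for every $x\in\frak{g}$. Since $\psi$ is $\frak{g}$-equivariant, $\psi\circ\phi_x=\mu_x\circ\psi$, whence $\psi\circ\phi_x^{\,n}=\mu_x^{\,n}\circ\psi$ for all $n\ge 0$, and summing the (finite-dimensional, absolutely convergent) exponential series yields $\psi\circ e^{\phi_x}=e^{\mu_x}\circ\psi$, i.e. $\psi\circ f_{\exp x}=h_{\exp x}\circ\psi$. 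The set $S:=\{g\in G:\psi\circ f_g=h_g\circ\psi\}$ is a subgroup of $G$, closure under products and inverses being immediate from the homomorphism property of $f$ and $h$, and $S\supseteq\exp(\frak{g})$. Since $G$ is connected, $\exp(\frak{g})$ contains a neighborhood of $e$ and therefore generates $G$, so $S=G$, which completes the argument. The main obstacle is precisely this last step --- passing from the infinitesimal equivariance of $\psi$ to equivariance at the group level --- and it is dispatched by the exponential-series computation together with the fact that $\exp(\frak{g})$ generates the connected group $G$; all remaining steps are direct invocations of Propositions \ref{SLG6a}, \ref{GSLG11} and Lemma \ref{GSLG10}.
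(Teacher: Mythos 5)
Your proposal is correct, and it follows the paper's overall architecture (invoke Proposition \ref{SLG6a} for existence and uniqueness of the symplectic Lie group map, then reduce $G$-equivariance of $\widehat{\psi}$ to the identity $\psi\circ f_g=h_g\circ\psi$ via connectedness of $Q$ and equality of differentials at $e$), but the way you establish that identity is genuinely different. The paper integrates the infinitesimal equivariance by an ODE-style argument: it forms $B(t,u,v)=\sigma\bigl(\psi\circ\overline{\phi}'_t(u)-\overline{\mu}'_t\circ\psi(u),\overline{\mu}'_t(v)\bigr)$, uses the cocycle-type additivity $B(t+s,\cdot)=B(t,\overline{\phi}'_s\cdot,\overline{\mu}'_s\cdot)+\cdots$ to show $\frac{d}{dt}B\equiv 0$, concludes $B\equiv 0$, and then needs nondegeneracy of $\sigma$ to extract $\psi\circ\overline{\phi}'_t=\overline{\mu}'_t\circ\psi$; this mirrors the technique of Lemma \ref{GSLG10}. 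You instead exploit that $f$ and $h$ take values in general linear groups, so naturality of the exponential gives $f_{\exp x}=e^{\phi_x}$, $h_{\exp x}=e^{\mu_x}$, and the intertwining relation $\psi\circ\phi_x=\mu_x\circ\psi$ exponentiates termwise through the (finite-dimensional) power series; the subgroup argument with $\exp(\frak{g})$ generating the connected group $G$ then finishes it, exactly as the paper also concludes from products of exponentials. Your route is shorter and more elementary, and notably does not use the symplectic form $\sigma$ or its nondegeneracy at all in this step; the paper's calculus argument is self-contained in the sense that it reuses machinery already set up for Lemma \ref{GSLG10} rather than appealing to the matrix exponential of $GL(\frak{q})$. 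Both are complete proofs of the statement.
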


\begin{proof}
By Proposition \ref{SLG6a}, there exists a unique homomorphism of symplectic Lie groups $\widehat{\psi}: (Q,\widetilde{\beta})\rightarrow (R,\widetilde{\sigma})$ such that $\widehat{\psi}_{\ast,e}=\psi$.  We now verify that $\widehat{\psi}$ is $G$-equivariant.  

Let $\overline{\phi}': G\rightarrow Aut(\frak{q},\beta)$, $g\mapsto \overline{\phi}'_g$ and $\overline{\mu}': G\rightarrow Aut(\frak{r},\sigma)$, $g\mapsto \overline{\mu}'_g$ be defined as in Proposition \ref{GSLG2}.  Fix $x\in \frak{g}$.  To simplify notation, let 
\begin{equation}
\nonumber
\overline{\phi}'_t:=\overline{\phi}'_{\mbox{exp}(tx)},\hspace*{0.1in} \overline{\mu}'_t:=\overline{\mu}'_{\mbox{exp}(tx)}.
\end{equation}
Define $B: \mathbb{R}\times \frak{q}\times \frak{r}\rightarrow \mathbb{R}$ by
\begin{align}
\nonumber
B(t,u,v)&:=\sigma(\psi\circ \overline{\phi}'_t(u)-\overline{\mu}'_t\circ \psi(u),\overline{\mu}'_t(v))\\
\nonumber
&=\sigma(\psi\circ \overline{\phi}'_t(u),\overline{\mu}'_t(v))-\sigma(\overline{\mu}'_t\circ \psi(u),\overline{\mu}'_t(v))\\
\label{GSLG12e1}
&=\sigma(\psi\circ \overline{\phi}'_t(u),\overline{\mu}'_t(v))-\sigma(\psi(u),v),
\end{align}
where the third equality follows from the fact that $\overline{\mu}'_t\in  Aut(\frak{r},\sigma)$.  Hence,
\begin{align}
\nonumber
\frac{d}{dt}\big|_{t=0} B(t,u,v)&=\sigma(\psi\circ \phi_x(u),v)+\sigma(\psi(u),\mu_x(v))\\
\nonumber
&=\sigma(\mu_x\circ \psi(u),v)+\sigma(\psi(u),\mu_x(v))\\
\label{GSLG12e2}
&=0,~\hspace*{0.1in}\forall~u\in \frak{q},~v\in \frak{r}
\end{align}
where the second equality follows from the fact that $\psi$ is $\frak{g}$-equivariant (i.e., $\psi\circ \phi_x=\mu_x\circ \psi$) and the third equality follows from the fact that $(\frak{r},\sigma,\mu)$ is a $\frak{g}$-quasi-Frobenius Lie algebra with 2-cocycle $\sigma$ and $\frak{g}$-action $\mu$.  Next note that 
\begin{align}
\label{GSLG12e3}
B(t+s,u,v)&=B(t,\overline{\phi}'_s(u),\overline{\mu}'_s(v))+\sigma(\psi(\overline{\phi}'_s(u)),\overline{\mu}'_s(v))-\sigma(\psi(u),v)
\end{align}
Hence,
\begin{align}
\label{GSLG12e4}
\frac{d}{dt}\big|_{t=s}B(t,u,v)=\frac{d}{dt}\big|_{t=0}B(t+s,u,v)=0+0-0=0,
\end{align}
where the first zero follows from (\ref{GSLG12e2}).  Hence, 
\begin{equation}
\label{GSLG12e5}
B(t,u,v)=B(0,u,v)=0,\hspace*{0.1in}\forall t\in \mathbb{R},~u\in \frak{q}~,v\in \frak{r}.
\end{equation}
In particular, $B(1,u,v)=0$ for all $u,v\in \frak{r}$.  Since $\sigma$ is nondegenerate and $\overline{\mu}'_t: \frak{r}\rightarrow \frak{r}$ is also a linear isomorphism for all $t$, it follows that
\begin{equation}
\label{GSLG12e6}
\psi\circ \overline{\phi}'_t=\overline{\mu}'_t\circ \psi,\hspace*{0.1in}\forall t\in \mathbb{R}.
\end{equation}
In particular, we have
\begin{equation}
\label{GSLG12e7}
\psi\circ \overline{\phi}'_{\mbox{exp}(x)}=\overline{\mu}'_{\mbox{exp}(x)}\circ \psi.
\end{equation}
Since $x\in \frak{g}$ was arbitrary, (\ref{GSLG12e7}) must hold for all $x\in \frak{g}$.  Since $G$ is connected, every element $g\in G$ is of the form $g=\mbox{exp}(x_1)\cdots \mbox{exp}(x_k)$ for some $x_i\in \frak{g}$, $i=1,\dots, k$.   It follows from this and the fact that $\overline{\phi}'$ and $\overline{\mu}'$ are group homomorphisms that 
\begin{equation}
\label{GSLG12e8}
\psi\circ \overline{\phi}'_{g}=\overline{\mu}'_{g}\circ \psi,\hspace*{0.1in}\forall ~g\in G.
\end{equation}
Equation (\ref{GSLG12e8}) combined with the fact that  (1) $Q$ is connected, (2) $\widehat{\psi}\circ \overline{\phi}_g$ and $\overline{\mu}_g\circ \widehat{\psi}$ are both Lie group homomorphisms $\forall ~g\in G$, and (3)
\begin{equation}
\label{GSLG12e9}
(\widehat{\psi}\circ \overline{\phi}_g)_{\ast,e}=\psi\circ \overline{\phi}'_g=\overline{\mu}'_{g}\circ \psi=(\overline{\mu}_g\circ \widehat{\psi})_{\ast,e}, \hspace*{0.1in}\forall ~g\in G
\end{equation}
imply that $\widehat{\psi}\circ \overline{\phi}_g=\overline{\mu}_g\circ \widehat{\psi}$ for all $g\in G$.  In other words, $\widehat{\psi}$ is $G$-equivariant and this completes the proof.
\end{proof}

\noindent We conclude the paper with the following generalization of Theorem \ref{SLG7}.
\begin{theorem}
\label{GSLG13}
Let $G$ be a simply connected Lie group and let $G$-$\mathbf{SCSLG}$ be the category of simply connected $G$-symplectic Lie groups and let $\frak{g}$-$\mathbf{qFLA}$ be the category of finite dimensional $\frak{g}$-quasi-Frobenius Lie algebras.  Let $\widehat{F}$ be the functor from $G$-$\mathbf{SCSLG}$ to $\frak{g}$-$\mathbf{qFLA}$ which sends the object $(Q,\omega,\varphi)$ to $(\frak{q},\omega_e,\varphi'')$, where $\varphi''$ is defined as in Proposition \ref{GSLG2} and the morphism $\Psi: (Q,\omega,\varphi)\rightarrow (R,\tau,\chi)$ to 
\begin{equation}
\nonumber
\Psi_{\ast,e}: (\frak{q},\omega_e,\varphi'')\mapsto (\frak{r},\tau_e,\chi'').
\end{equation}
Then $\widehat{F}$ is an equivalence of categories.  
\end{theorem}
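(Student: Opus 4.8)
The plan is to follow the scheme used for Theorem~\ref{SLG7} and verify that $\widehat{F}$ is a well-defined functor which is full, faithful, and essentially surjective; this is equivalent to being an equivalence of categories. Well-definedness on objects is Proposition~\ref{GSLG2} and on morphisms is Proposition~\ref{GSLG9}, and functoriality is immediate from the chain rule $(\Psi\circ\Phi)_{\ast,e}=\Psi_{\ast,e}\circ\Phi_{\ast,e}$ together with $(id_Q)_{\ast,e}=id_{\frak{q}}$. Faithfulness is the observation that a homomorphism $\Psi\colon(Q,\omega,\varphi)\to(R,\tau,\chi)$ of $G$-symplectic Lie groups is in particular a Lie group homomorphism out of the connected group $Q$, hence is determined by $\Psi_{\ast,e}$. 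Essential surjectivity is exactly Proposition~\ref{GSLG11}: given $(\frak{q},\beta,\phi)\in\frak{g}\text{-}\mathbf{qFLA}$, let $Q$ be the simply connected Lie group with Lie algebra $\frak{q}$; then $(Q,\widetilde{\beta},\overline{\phi})$ is a simply connected $G$-symplectic Lie group (here one uses that $G$ is simply connected) with $\widehat{F}(Q,\widetilde{\beta},\overline{\phi})=(\frak{q},\widetilde{\beta}_e,\overline{\phi}'')=(\frak{q},\beta,\phi)$.

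The one step with real content is fullness. Let $(Q,\omega,\varphi)$ and $(R,\tau,\chi)$ be objects of $G\text{-}\mathbf{SCSLG}$ and let $\psi\colon(\frak{q},\omega_e,\varphi'')\to(\frak{r},\tau_e,\chi'')$ be a homomorphism of $\frak{g}$-quasi-Frobenius Lie algebras; I must lift $\psi$ to a homomorphism $\Psi$ of $G$-symplectic Lie groups with $\Psi_{\ast,e}=\psi$. Since $Q$ is simply connected and $\psi\colon\frak{q}\to\frak{r}$ is a Lie algebra homomorphism, there is a unique Lie group homomorphism $\Psi\colon Q\to R$ with $\Psi_{\ast,e}=\psi$. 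The identity $\Psi^{\ast}\tau=\omega$ follows from Lemma~\ref{SLG5} together with $(\Psi^{\ast}\tau)_e(u,v)=\tau_e(\psi(u),\psi(v))=(\psi^{\ast}\tau_e)(u,v)=\omega_e(u,v)$, the last equality holding because $\psi$ is a homomorphism of quasi-Frobenius Lie algebras; this is precisely the computation appearing in Proposition~\ref{SLG6a}. For $G$-equivariance, recall from Proposition~\ref{GSLG2} that $\varphi'\colon G\to\mbox{Aut}(\frak{q},\omega_e)$ and $\chi'\colon G\to\mbox{Aut}(\frak{r},\tau_e)$ satisfy $\varphi'_{\ast,e}=\varphi''$ and $\chi'_{\ast,e}=\chi''$, while $\psi$ intertwines $\varphi''$ and $\chi''$ by the $\frak{g}$-equivariance built into Definition~\ref{gqFLA14}; since $G$ is connected it follows that $\psi$ intertwines the integrated actions, i.e.\ $\psi\circ\varphi'_g=\chi'_g\circ\psi$ for all $g\in G$. (If one prefers an argument entirely in the spirit of Proposition~\ref{GSLG12}, fix $x\in\frak{g}$, put $B(t,u,v):=\tau_e(\psi\circ\varphi'_{\mbox{exp}(tx)}(u)-\chi'_{\mbox{exp}(tx)}\circ\psi(u),\,\chi'_{\mbox{exp}(tx)}(v))$, show $\frac{d}{dt}B\equiv 0$ using $\frak{g}$-equivariance of $\psi$ and $\frak{g}$-invariance of $\tau_e$, deduce $B\equiv 0$, and conclude by nondegeneracy of $\tau_e$ and connectedness of $G$.) Finally, for each $g\in G$ the maps $\Psi\circ\varphi_g$ and $\chi_g\circ\Psi$ are Lie group homomorphisms $Q\to R$ whose differentials at $e$ are $\psi\circ\varphi'_g$ and $\chi'_g\circ\psi$; these agree, so the two maps coincide because $Q$ is connected. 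Thus $\Psi$ is $G$-equivariant and $\widehat{F}(\Psi)=\psi$, proving fullness.

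I expect the verification of $G$-equivariance in the fullness step to be the main obstacle: it is the only place where the new data — the $\frak{g}$-action, as opposed to the purely quasi-Frobenius structure already handled by Theorem~\ref{SLG7} — has to be transported from the Lie algebra to the group level, and it is exactly where simple connectedness of both $G$ and $Q$ enters. Everything else is either a formal category-theoretic consequence or an instance of the simply connected Lie group–finite dimensional Lie algebra correspondence combined with Propositions~\ref{GSLG2}, \ref{GSLG9}, \ref{GSLG11}, and \ref{GSLG12}. As an alternative to checking full, faithful, and essentially surjective by hand, one could package the proof verbatim as in Theorem~\ref{SLG7}: first observe that every object of $G\text{-}\mathbf{SCSLG}$ is isomorphic to one of the canonical form $(Q,\widetilde{\beta},\overline{\phi})$ produced by Proposition~\ref{GSLG11}, and then invoke Proposition~\ref{GSLG12} directly for the lifting of morphisms.
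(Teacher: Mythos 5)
Your proposal is correct and follows essentially the same route as the paper: the paper's proof simply assembles Theorem \ref{SLG7} with Propositions \ref{GSLG2}, \ref{GSLG9}, \ref{GSLG11}, and \ref{GSLG12}, and your explicit verification of functoriality, faithfulness, fullness, and essential surjectivity is exactly that assembly spelled out (your parenthetical $B(t,u,v)$ argument for $G$-equivariance is precisely the computation in the paper's proof of Proposition \ref{GSLG12}).
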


\begin{proof}
Theorem \ref{GSLG13} follows from Theorem \ref{SLG7}, Proposition \ref{GSLG2}, Proposition \ref{GSLG9}, Proposition \ref{GSLG11}, and Proposition \ref{GSLG12}.
\end{proof}

\section{$D(\frak{g})$-quasi-Frobenius Lie algberas}
\noindent Let $(\frak{g},\gamma)$ be a finite dimensional Lie bialgebra.  We begin with the following observation:

\begin{proposition}
\label{Dg1}
Let $V$ be a vector space over $k$ and let $\rho: D(\frak{g})\rightarrow \frak{gl}(V)$ be a linear map (not necessarily a representation).  Define
\begin{equation}
\nonumber
\varphi:=\rho|_{\frak{g}}: \frak{g} \rightarrow \frak{gl}(V),\hspace*{0.2in}\psi:=\rho|_{\frak{g}^\ast}: \frak{g}^\ast \rightarrow \frak{gl}(V).
\end{equation}
The following statements are equivalent.
\begin{itemize}
\item[(i)] $\rho$ is a representation of $D(\frak{g})$ on $V$.
\item[(ii)] $\varphi$ and $\psi$ are representations of $\frak{g}$ and $\frak{g}^\ast$ on $V$ which satisfy
\begin{equation}
\label{Dg1e1}
\psi_{ad^\ast_x\xi}-\varphi_{ad^\ast_\xi x}=\varphi_x\circ \psi_\xi-\psi_\xi\circ \varphi_x,~\hspace*{0.1in}\forall~x\in \frak{g},~\xi\in \frak{g}^\ast.
\end{equation}
\end{itemize}
\end{proposition}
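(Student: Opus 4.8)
The plan is to exploit that $D(\frak{g})=\frak{g}\oplus\frak{g}^\ast$ as a vector space, so a linear map out of $D(\frak{g})$ is determined by its restrictions to $\frak{g}$ and to $\frak{g}^\ast$, and the homomorphism condition need only be checked on pairs drawn from a spanning set. Since the bracket $[\cdot,\cdot]_D$ and the commutator bracket on $\frak{gl}(V)$ are bilinear and $\rho$ is linear, $\rho$ is a representation of $D(\frak{g})$ if and only if $\rho([a,b]_D)=[\rho(a),\rho(b)]$ holds for $(a,b)$ ranging over the three cases: $a,b\in\frak{g}$; $a,b\in\frak{g}^\ast$; and $a\in\frak{g}$, $b\in\frak{g}^\ast$ (the case $a\in\frak{g}^\ast$, $b\in\frak{g}$ then following from skew-symmetry of both brackets). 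So I would first record this reduction as a short lemma-in-proof.

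For $(i)\Rightarrow(ii)$ I would restrict the homomorphism identity to pairs in $\frak{g}$ and use $[x,y]_D=[x,y]_{\frak{g}}$ to get $\varphi([x,y])=[\varphi_x,\varphi_y]$, so $\varphi$ is a representation of $\frak{g}$; likewise $\psi$ is a representation of $\frak{g}^\ast$. Then I would apply the identity to a mixed pair $x\in\frak{g}$, $\xi\in\frak{g}^\ast$ and use $[x,\xi]_D=ad^\ast_x\xi-ad^\ast_\xi x$, noting $ad^\ast_x\xi\in\frak{g}^\ast$ and $ad^\ast_\xi x\in\frak{g}$, so that $\rho(ad^\ast_x\xi)=\psi_{ad^\ast_x\xi}$ and $\rho(ad^\ast_\xi x)=\varphi_{ad^\ast_\xi x}$; this yields exactly (\ref{Dg1e1}). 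For $(ii)\Rightarrow(i)$ I would run the same three cases of the reduction in reverse: the $\frak{g}$-$\frak{g}$ case is the assumption that $\varphi$ is a representation, the $\frak{g}^\ast$-$\frak{g}^\ast$ case that $\psi$ is, and the mixed case is precisely (\ref{Dg1e1}); by the reduction this gives that $\rho$ is a representation of $D(\frak{g})$.

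I do not expect a genuine obstacle; the content is bookkeeping. The one place to be careful is tracking which summand of $[x,\xi]_D$ lies in $\frak{g}$ and which in $\frak{g}^\ast$ — i.e. that $\rho$ acts on $ad^\ast_x\xi$ through $\psi$ and on $ad^\ast_\xi x$ through $\varphi$ — and matching the sign conventions $ad^\ast_x=-ad^t_x$, $ad^\ast_\xi=-ad^t_\xi$ from the definition of $D(\frak{g})$, so that the two sides of (\ref{Dg1e1}) align. It is also worth remarking that the Jacobi identity of $D(\frak{g})$ is not needed here: the Drinfeld double bracket is taken as given, and only its bilinearity and its explicit form on $\frak{g}\times\frak{g}$, $\frak{g}^\ast\times\frak{g}^\ast$, and $\frak{g}\times\frak{g}^\ast$ enter the argument.
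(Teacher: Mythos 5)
Your proposal is correct and follows essentially the same route as the paper: the forward direction restricts the homomorphism identity to the subalgebras $\frak{g}$, $\frak{g}^\ast$ and to mixed pairs via $[x,\xi]=ad^\ast_x\xi-ad^\ast_\xi x$, and the converse is the bilinear expansion of $\rho_{[x+\xi,y+\eta]}$, which the paper writes out explicitly and which your spanning-set reduction lemma packages in the same way. The sign conventions and the placement of $ad^\ast_x\xi\in\frak{g}^\ast$, $ad^\ast_\xi x\in\frak{g}$ are handled exactly as in the paper, so there is nothing to add.
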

\begin{proof}
$(i)\Rightarrow (ii)$.   Since $\rho$ is a representation of $D(\frak{g})$ on $V$, it follows immediately that $\varphi$ and $\psi$ must be representations of $\frak{g}$ and $\frak{g}^\ast$ on $V$ respectively.   For  (\ref{Dg1e1}), we note that 
\begin{equation}
\nonumber
[x,\xi]=ad^\ast_x\xi-ad^\ast_\xi x\hspace*{0.1in} \forall~ x\in \frak{g},~\xi\in \frak{g}^\ast. 
\end{equation}
Since $\rho$ is a representation and $\varphi:=\rho|_{\frak{g}}$ and $\psi:=\rho|_{\frak{g}^\ast}$, we have
\begin{equation}
\nonumber
\psi_{ad^\ast_x\xi}-\varphi_{ad^\ast_\xi x}=\rho_{[x,\xi]}=\rho_x\rho_\xi-\rho_\xi\rho_x=\varphi_x\psi_\xi-\psi_\xi\varphi_x,
\end{equation}
which proves (\ref{Dg1e1}).

$(i)\Leftarrow (ii)$. Let $a=x+\xi\in D(\frak{g})$.  Then
\begin{align}
\nonumber
\rho_{[x+\xi,y+\eta]}&=\rho_{[x,y]}+\rho_{[x,\eta]}+\rho_{[\xi,y]}+\rho_{[\xi,\eta]}\\
\nonumber
&=\varphi_{[x,y]}+\psi_{ad^\ast_x\eta}-\varphi_{ad^\ast_\eta x}+\varphi_{ad^\ast_{\xi}y}-\psi_{ad^\ast_y \xi}+\psi_{[\xi,\eta]} \\
\nonumber
&=\varphi_x\circ \varphi_y-\varphi_y\circ \varphi_x+\varphi_x\circ\psi_\eta-\psi_\eta\circ \varphi_x\\
\nonumber
&\hspace*{0.2in} + \psi_\xi\circ \varphi_y-\varphi_y\circ \psi_\xi+\psi_\xi\circ \psi_\eta-\psi_\eta\circ \psi_\xi\\
\nonumber
&=(\varphi_x+\psi_\xi)\circ (\varphi_y+\psi_\eta)- (\varphi_y+\psi_\eta)\circ (\varphi_x+\psi_\xi)\\
\nonumber
&=\rho_{x+\xi}\circ \rho_{y+\eta}- \rho_{y+\eta}\circ \rho_{x+\xi}.
\end{align}
This proves that $\rho: D(\frak{g})\rightarrow \frak{gl}(V)$ is a representation of $D(\frak{g})$ on $V$. 
\end{proof}

\begin{proposition}
\label{Dg2}
Let $(\frak{q},\beta)$ be a quasi-Frobenius Lie algebra and let $\rho: D(\frak{g})\rightarrow \frak{gl}(\frak{q})$ be a linear map (not necessarily a representation).  Define $\varphi:=\rho|_{\frak{g}}$ and $\psi:=\rho|_{\frak{g}^\ast}$.  Then $(\frak{q},\beta,\rho)$ is a $D(\frak{g})$-quasi-Frobenius Lie algebra iff the following conditions are satisfied:
\begin{itemize}
\item[(a)] $\psi_{ad^\ast_x\xi}-\varphi_{ad^\ast_\xi x}=\varphi_x\circ \psi_\xi-\psi_\xi\circ \varphi_x,~\hspace*{0.1in}\forall~x\in \frak{g},~\xi\in \frak{g}^\ast$
\item[(b)] $(\frak{q},\beta,\varphi)$ is a $\frak{g}$-quasi-Frobenius Lie algebra.
\item[(b)] $(\frak{q},\beta,\psi)$ is a $\frak{g}^\ast$-quasi-Frobenius Lie algebra.
\end{itemize}
\end{proposition}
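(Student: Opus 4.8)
The plan is to unwind Definition \ref{gqFLA8} in the case where the acting Lie algebra is $D(\frak{g})$ and to observe that each of its three ingredients --- that $\rho$ be a module structure, that every $\rho_a$ act on $\frak{q}$ by derivations, and that $\beta$ be $D(\frak{g})$-invariant --- is \emph{linear} in the acting element $a\in D(\frak{g})=\frak{g}\oplus\frak{g}^\ast$. Consequently each condition decouples into a statement about $\varphi=\rho|_{\frak{g}}$, a statement about $\psi=\rho|_{\frak{g}^\ast}$, and, for the module structure only, a cross term that is precisely the content of Proposition \ref{Dg1}. Since $(\frak{q},\beta)$ is already assumed quasi-Frobenius, that part of Definition \ref{gqFLA8} is automatic throughout.

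For the forward implication I would start from a $D(\frak{g})$-quasi-Frobenius Lie algebra $(\frak{q},\beta,\rho)$. Because $\frak{g}$ and $\frak{g}^\ast$ sit inside $D(\frak{g})$ as Lie subalgebras, the restrictions $\varphi$ and $\psi$ are automatically representations of $\frak{g}$ and $\frak{g}^\ast$; because every $\rho_a$ is a derivation of $\frak{q}$, in particular every $\varphi_x=\rho_x$ and every $\psi_\xi=\rho_\xi$ is a derivation; and $D(\frak{g})$-invariance of $\beta$ restricts to $\frak{g}$-invariance through $\varphi$ and to $\frak{g}^\ast$-invariance through $\psi$. This yields conditions (b) and (c), while condition (a) is exactly the implication $(i)\Rightarrow(ii)$ of Proposition \ref{Dg1} applied to the representation $\rho$.

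For the converse, assuming (a), (b), (c): conditions (b) and (c) say in particular that $\varphi$ and $\psi$ are representations of $\frak{g}$ and $\frak{g}^\ast$, so together with (a) and the implication $(ii)\Rightarrow(i)$ of Proposition \ref{Dg1} we conclude that $\rho\colon D(\frak{g})\to\frak{gl}(\frak{q})$ is a representation. It remains to check the two pointwise conditions of Definition \ref{gqFLA8}. Writing an arbitrary element of $D(\frak{g})$ as $a=x+\xi$ with $x\in\frak{g}$ and $\xi\in\frak{g}^\ast$, we have $\rho_a=\varphi_x+\psi_\xi$; since $\varphi_x$ is a derivation of $\frak{q}$ by (b) and $\psi_\xi$ is a derivation by (c), so is their sum $\rho_a$. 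Likewise, by bilinearity of $\beta$, the expression $\beta(\rho_a u,v)+\beta(u,\rho_a v)$ splits as the sum of the corresponding $\varphi_x$-expression, which vanishes by the $\frak{g}$-invariance in (b), and the corresponding $\psi_\xi$-expression, which vanishes by the $\frak{g}^\ast$-invariance in (c); hence $\beta$ is $D(\frak{g})$-invariant. Therefore $(\frak{q},\beta,\rho)$ is a $D(\frak{g})$-quasi-Frobenius Lie algebra.

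I do not expect a genuine obstacle here. The only points requiring care are invoking Proposition \ref{Dg1} in the correct direction for the module-structure half of the equivalence, and the observation --- which deserves an explicit sentence --- that the derivation and invariance conditions, being additive in the acting element and already known on the two subalgebras $\frak{g}$ and $\frak{g}^\ast$ whose sum is all of $D(\frak{g})$, extend for free to all of $D(\frak{g})$, so that no compatibility beyond (a) is needed to handle them.
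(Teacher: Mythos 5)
Your proposal is correct and follows essentially the same route as the paper: invoke Proposition \ref{Dg1} for the equivalence of module structures (including the cross condition (a)), and use the vector space decomposition $D(\frak{g})=\frak{g}\oplus\frak{g}^\ast$ to see that the derivation and $\beta$-invariance conditions of Definition \ref{gqFLA8} hold for $\rho$ iff they hold for $\varphi$ and $\psi$ separately. Your write-up is merely more explicit about the additivity argument, which the paper states in one sentence.
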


\begin{proof}
By Proposition \ref{Dg1}, $\rho$ is left $D(\frak{g})$-module structure on $\frak{q}$ iff $\varphi$ and $\psi$ are left $\frak{g}$ and $\frak{g}^\ast$-module structures on $\frak{q}$ respectively which satisfy condition (a).  Since $D(\frak{g})=\frak{g}\oplus \frak{g}^\ast$ as a vector space, it follows that $\rho: D(\frak{g})\rightarrow \frak{gl}(\frak{q})$ satisfies conditions (i) and (ii) of Definition \ref{gqFLA8} iff $\phi: \frak{g}\rightarrow \frak{gl}(\frak{q})$ and $\psi: \frak{g}^\ast\rightarrow \frak{gl}(\frak{q})$ both satisfy conditions (i) and (ii) of Definition \ref{gqFLA8}.  This completes the proof. 
\end{proof}

\begin{proposition}
\label{Dg3}
Let $\frak{g}$ be a finite dimensional quasitriangular Lie bialgebra with r-matrix $r=\sum_i a_i\otimes b_i$.  Let $\varphi: \frak{g}\rightarrow \frak{gl}(V)$, $x\mapsto \varphi(x)$ be a represenation of $\frak{g}$ on $V$.  Define $\psi: \frak{g}^\ast\rightarrow \frak{gl}(V)$, $\xi\mapsto \psi(\xi)$ by
\begin{equation}
\label{Dg3e1}
\psi(\xi):=\sum_i \xi(a_i)\varphi(b_i),\hspace*{0.1in} \forall~\xi\in \frak{g}^\ast.
\end{equation}
Then $\psi$ is a representation of $\frak{g}^\ast$ on $V$.
\end{proposition}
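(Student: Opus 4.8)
The plan is to check directly that $\psi$ is a homomorphism of Lie algebras $\frak{g}^\ast\to\frak{gl}(V)$; linearity of $\psi$ is immediate from (\ref{Dg3e1}), so the only point is that
\begin{equation}
\nonumber
\psi([\xi,\eta]_{\frak{g}^\ast})=[\psi(\xi),\psi(\eta)]:=\psi(\xi)\circ\psi(\eta)-\psi(\eta)\circ\psi(\xi),\qquad\forall~\xi,\eta\in\frak{g}^\ast.
\end{equation}
First I would record the bracket on $\frak{g}^\ast$ explicitly. By Definition \ref{SSD1} it is $[\xi,\eta]_{\frak{g}^\ast}=\gamma^\ast(\xi\otimes\eta)$, where $\gamma=\delta r$ is the cobracket of $\frak{g}$, so $[\xi,\eta]_{\frak{g}^\ast}(x)=(\xi\otimes\eta)(\gamma(x))$ for $x\in\frak{g}$. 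Since $\gamma(x)=ad_x^{(2)}(r)=\sum_i\bigl([x,a_i]\otimes b_i+a_i\otimes[x,b_i]\bigr)$, this gives
\begin{equation}
\nonumber
[\xi,\eta]_{\frak{g}^\ast}(x)=\sum_i\bigl(\xi([x,a_i])\,\eta(b_i)+\xi(a_i)\,\eta([x,b_i])\bigr),\qquad x\in\frak{g}.
\end{equation}

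Next I would expand both sides of the desired identity. Applying $\psi(\zeta)=\sum_j\zeta(a_j)\varphi(b_j)$ with $\zeta=[\xi,\eta]_{\frak{g}^\ast}$, evaluating the formula above at $x=a_j$, and using the skew-symmetry of the bracket of $\frak{g}$, one gets
\begin{equation}
\nonumber
\psi([\xi,\eta]_{\frak{g}^\ast})=-\sum_{i,j}\xi([a_i,a_j])\,\eta(b_i)\,\varphi(b_j)-\sum_{i,j}\xi(a_i)\,\eta([b_i,a_j])\,\varphi(b_j).
\end{equation}
On the other side, since $\varphi$ is a representation of $\frak{g}$, expanding the commutator and relabelling indices gives
\begin{equation}
\nonumber
[\psi(\xi),\psi(\eta)]=\sum_{i,j}\xi(a_i)\,\eta(a_j)\bigl(\varphi(b_i)\varphi(b_j)-\varphi(b_j)\varphi(b_i)\bigr)=\sum_{i,j}\xi(a_i)\,\eta(a_j)\,\varphi([b_i,b_j]).
\end{equation}
Thus the claim is equivalent to the single identity
\begin{equation}
\nonumber
\sum_{i,j}\xi([a_i,a_j])\,\eta(b_i)\,\varphi(b_j)+\sum_{i,j}\xi(a_i)\,\eta([b_i,a_j])\,\varphi(b_j)+\sum_{i,j}\xi(a_i)\,\eta(a_j)\,\varphi([b_i,b_j])=0.
\end{equation}

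Finally I would obtain this last identity from the quasitriangularity of $\frak{g}$: by Definition \ref{SSD13} the $r$-matrix solves the classical Yang--Baxter equation $[[r,r]]=[r_{12},r_{13}]+[r_{12},r_{23}]+[r_{13},r_{23}]=0$ in $\frak{g}\otimes\frak{g}\otimes\frak{g}$. Applying the linear map $\xi\otimes\eta\otimes\varphi\colon\frak{g}\otimes\frak{g}\otimes\frak{g}\to\frak{gl}(V)$ to this equation and using the explicit expressions (\ref{SSD7}), (\ref{SSD8}), (\ref{SSD9}) for $[r_{12},r_{13}]$, $[r_{12},r_{23}]$, $[r_{13},r_{23}]$ respectively, one sees that the images of the three summands are exactly the three sums displayed above, so their total is $0$. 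Hence $\psi([\xi,\eta]_{\frak{g}^\ast})=[\psi(\xi),\psi(\eta)]$ and $\psi$ is a representation of $\frak{g}^\ast$ on $V$.

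The computations are elementary multilinear algebra; the one step that takes care — and the main potential pitfall — is the bookkeeping that aligns the two terms coming from the $\frak{g}^\ast$-bracket together with the single commutator term from the $\varphi$-side against the three terms of $[[r,r]]$, which needs repeated use of the skew-symmetry of $[\cdot,\cdot]_{\frak{g}}$ and a fixed, consistent sign convention for the coboundary $\delta$ and for the duality pairing. I would also note that the argument proves something sharper and independent of $V$: the contraction $r_-\colon\frak{g}^\ast\to\frak{g}$, $\xi\mapsto\sum_i\xi(a_i)b_i$, is a Lie algebra homomorphism (for the bracket $\gamma^\ast$ on $\frak{g}^\ast$), and $\psi=\varphi\circ r_-$; one may prefer to prove that statement first and then deduce the proposition by composing with $\varphi$.
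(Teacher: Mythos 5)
Your proof is correct and follows essentially the same route as the paper: expand $\psi([\xi,\eta])$ using $[\xi,\eta]=(\xi\otimes\eta)\circ\delta r$, expand $[\psi(\xi),\psi(\eta)]$ using that $\varphi$ is a representation, and identify the difference with the image of $[[r,r]]$ under $\xi\otimes\eta\otimes\varphi$, which vanishes by the CYBE. The only cosmetic difference is that you apply $\xi\otimes\eta\otimes\varphi$ directly to $[[r,r]]=0$ (after a sign bookkeeping via skew-symmetry), whereas the paper first rewrites the CYBE by isolating the $[r_{13},r_{23}]$ term; your closing remark about the contraction $\xi\mapsto\sum_i\xi(a_i)b_i$ being a Lie algebra map is a nice sharpening but not needed.
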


\begin{proof}
We need to show that 
\begin{equation}
\label{Dg3e2}
\psi([\xi,\eta])=\psi(\xi)\psi(\eta)-\psi(\eta)\psi(\xi).
\end{equation}
We now expand the left side of (\ref{Dg3e2}):
\begin{align}
\nonumber
\psi([\xi,\eta])&=\sum_j [\xi,\eta](a_j)\varphi(b_j)\\
\nonumber
&=\sum_j (\xi\otimes \eta)((\delta r)(a_j))\varphi(b_j)\\
\label{Dg3e3}
&=\sum_{i,j}\xi([a_j,a_i])\eta(b_i)\varphi(b_j)+\sum_{i,j}\xi(a_i)\eta([a_j,b_i]))\varphi(b_j).
\end{align}
The right side of (\ref{Dg3e2}) expands as
\begin{align}
\nonumber
\psi(\xi)\psi(\eta)-\psi(\eta)\psi(\xi)&=\sum_{i,j}\xi(a_i)\eta(a_j)\varphi(b_i)\varphi(b_j)-\sum_{i,j}\eta(a_j)\xi(a_i)\varphi(b_j)\varphi(b_i)\\
\label{Dg3e4}
&=\sum_{i,j}\xi(a_i)\eta(a_j)\varphi([b_i,b_j]).
\end{align}
The CYBE can be rewritten as 
\begin{equation}
\label{Dg3e5}
\sum_{i,j} a_i\otimes a_j\otimes [b_i,b_j]=\sum_{i,j}[a_j,a_i]\otimes b_i\otimes b_j+\sum_{i,j} a_i\otimes [a_j,b_i]\otimes b_j.
\end{equation}
Applying $\xi\otimes \eta\otimes \varphi$ to both sides of (\ref{Dg3e5}) gives
\begin{equation}
\label{Dg3e6}
\sum_{i,j} \xi(a_i)\eta(a_j)\varphi([b_i,b_j])=\sum_{i,j}\xi([a_j,a_i])\eta(b_i)\varphi(b_j)+\sum_{i,j}\xi(a_i)\eta([a_j,b_i])\varphi(b_j).
\end{equation}
Equations (\ref{Dg3e3}), (\ref{Dg3e4}), and (\ref{Dg3e6}) imply
\begin{equation}
\nonumber
\psi(\xi)\psi(\eta)-\psi(\eta)\psi(\xi)=\psi([\xi,\eta]).
\end{equation}
This completes the proof.
\end{proof}

\begin{corollary}
\label{Dg4}
Let $\frak{g}$ be a finite dimensional quasitriangular Lie bialgebra with r-matrix $r=\sum_i a_i\otimes b_j$ and let $(\frak{q},\beta,\varphi)$ be a $\frak{g}$-quasi-Frobenius Lie algebra.  Define $\psi: \frak{g}^\ast \rightarrow \frak{gl}(\frak{q})$, $\xi\mapsto \psi(\xi)$ by 
\begin{equation}
\nonumber
\psi(\xi):=\sum_i \xi(a_i)\varphi(b_i),
\end{equation}
where $\varphi(b_i):=\varphi_{b_i}: \frak{q}\rightarrow \frak{q}$.  Then $(\frak{q},\beta,\psi)$ is a $\frak{g}^\ast$-quasi-Frobenius Lie algebra.
\end{corollary}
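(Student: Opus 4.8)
The plan is straightforward once Proposition \ref{Dg3} is in hand. First I would observe that, for each fixed $\xi\in\frak{g}^\ast$, the operator $\psi(\xi)=\psi_\xi=\sum_i\xi(a_i)\varphi_{b_i}$ is merely a $k$-linear combination of the operators $\varphi_{b_i}=\varphi(b_i)\in\frak{gl}(\frak{q})$. Since $(\frak{q},\beta,\varphi)$ is a $\frak{g}$-quasi-Frobenius Lie algebra, $\varphi:\frak{g}\rightarrow\frak{gl}(\frak{q})$ is in particular a representation of $\frak{g}$ on $\frak{q}$, so Proposition \ref{Dg3} applies verbatim with $V=\frak{q}$ and yields that $\psi:\frak{g}^\ast\rightarrow\frak{gl}(\frak{q})$ is a representation of $\frak{g}^\ast$, i.e.\ a left $\frak{g}^\ast$-module structure on $\frak{q}$. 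It then remains only to check conditions (i) and (ii) of Definition \ref{gqFLA8} for the triple $(\frak{q},\beta,\psi)$.

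For condition (i), I would use that every $\varphi_x$, hence every $\varphi_{b_i}$, is a derivation of $\frak{q}$ (a defining property of the $\frak{g}$-quasi-Frobenius Lie algebra $(\frak{q},\beta,\varphi)$). Since the space of derivations is a $k$-subspace of $\frak{gl}(\frak{q})$, so is $\psi_\xi=\sum_i\xi(a_i)\varphi_{b_i}$; concretely, for $u,v\in\frak{q}$,
\begin{equation}
\nonumber
\psi_\xi([u,v])=\sum_i\xi(a_i)\varphi_{b_i}([u,v])=\sum_i\xi(a_i)\big([\varphi_{b_i}(u),v]+[u,\varphi_{b_i}(v)]\big)=[\psi_\xi(u),v]+[u,\psi_\xi(v)].
\end{equation}
For condition (ii), the $\frak{g}$-invariance of $\beta$ gives $\beta(\varphi_{b_i}(u),v)+\beta(u,\varphi_{b_i}(v))=0$ for each $i$, so by bilinearity
\begin{equation}
\nonumber
\beta(\psi_\xi(u),v)+\beta(u,\psi_\xi(v))=\sum_i\xi(a_i)\big(\beta(\varphi_{b_i}(u),v)+\beta(u,\varphi_{b_i}(v))\big)=0,
\end{equation}
which is precisely the $\frak{g}^\ast$-invariance of $\beta$. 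Together with the module structure supplied by Proposition \ref{Dg3}, this exhibits $(\frak{q},\beta,\psi)$ as a $\frak{g}^\ast$-quasi-Frobenius Lie algebra.

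I do not anticipate any real obstacle here: all of the substantive work — verifying that $\psi$ respects the bracket of $\frak{g}^\ast$, which is where the classical Yang--Baxter equation enters — is already carried out in Proposition \ref{Dg3}, and the two remaining axioms are inherited from $(\frak{q},\beta,\varphi)$ simply because the defining property of a derivation and the $\beta$-skewness relation are each stable under forming $k$-linear combinations of operators. The only point requiring care is the routine bookkeeping of the scalar coefficients $\xi(a_i)$, which factor out of every expression by bilinearity.
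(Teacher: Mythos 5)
Your proof is correct and fills in exactly the argument the paper leaves implicit (its proof is just ``Immediate''): Proposition \ref{Dg3} supplies the $\frak{g}^\ast$-module structure, and conditions (i) and (ii) of Definition \ref{gqFLA8} pass to $\psi_\xi=\sum_i\xi(a_i)\varphi_{b_i}$ by linearity. No gaps; this is the same route as the paper's.
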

\begin{proof}
Immediate.
\end{proof}

\begin{proposition}
\label{Dg5}
Let $\frak{g}$ be a finite dimensional quasitriangular Lie bialgebra with r-matrix $r=\sum_i a_i\otimes b_i$.  Let $\varphi: \frak{g}\rightarrow \frak{gl}(V)$, $x\mapsto \varphi(x)$ be a represenation of $\frak{g}$ on $V$.  Define $\psi: \frak{g}^\ast\rightarrow \frak{gl}(V)$, $\xi\mapsto \psi(\xi)$ according to Proposition \ref{Dg3}.  Define $\rho: D(\frak{g})\rightarrow \frak{gl}(V)$, $a \mapsto \rho(a)$ by 
\begin{equation}
\rho(x+\xi):=\varphi(x)+\psi(\xi),\hspace*{0.1in}\forall~x\in \frak{g},~\xi\in \frak{g}^\ast.
\end{equation}
Then $\rho$ is a representation of $D(\frak{g})$ on $V$.
\end{proposition}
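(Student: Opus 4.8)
\emph{Proof proposal.} The plan is to reduce the statement entirely to the compatibility criterion of Proposition \ref{Dg1}. By hypothesis $\varphi=\rho|_{\frak{g}}$ is a representation of $\frak{g}$ on $V$, and by Proposition \ref{Dg3} the map $\psi=\rho|_{\frak{g}^\ast}$, given by $\psi(\xi)=\sum_i\xi(a_i)\varphi(b_i)$, is a representation of $\frak{g}^\ast$ on $V$. Hence, via the implication $(ii)\Rightarrow(i)$ of Proposition \ref{Dg1}, it suffices to verify the single identity
$$\psi_{ad^\ast_x\xi}-\varphi_{ad^\ast_\xi x}=\varphi_x\circ\psi_\xi-\psi_\xi\circ\varphi_x,\qquad\forall\,x\in\frak{g},\ \xi\in\frak{g}^\ast.$$

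First I would compute the right-hand side: since $\varphi$ is a Lie algebra homomorphism,
$$\varphi_x\circ\psi_\xi-\psi_\xi\circ\varphi_x=\sum_i\xi(a_i)\,(\varphi_x\varphi_{b_i}-\varphi_{b_i}\varphi_x)=\sum_i\xi(a_i)\,\varphi_{[x,b_i]}.$$
For the left-hand side I would treat the two coadjoint terms separately. Using $ad^\ast_x=-ad^t_x$ one has $(ad^\ast_x\xi)(a_i)=-\xi([x,a_i])$, so $\psi_{ad^\ast_x\xi}=-\sum_i\xi([x,a_i])\varphi_{b_i}$. The one genuinely delicate step is $\varphi_{ad^\ast_\xi x}$: here I would unwind the bracket on $\frak{g}^\ast$, which is dual to the cobracket $\gamma=\delta r$. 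Explicitly $\gamma(z)=\sum_i([z,a_i]\otimes b_i+a_i\otimes[z,b_i])$, so for $\eta\in\frak{g}^\ast$, $z\in\frak{g}$,
$$[\xi,\eta](z)=(\xi\otimes\eta)(\gamma(z))=\sum_i\big(\xi([z,a_i])\eta(b_i)+\xi(a_i)\eta([z,b_i])\big).$$
Combining $ad^\ast_\xi x=-ad^t_\xi x$ with $\eta(ad^t_\xi x)=[\xi,\eta](x)$ then identifies the element $ad^\ast_\xi x=-\sum_i\big(\xi([x,a_i])b_i+\xi(a_i)[x,b_i]\big)\in\frak{g}$, whence $\varphi_{ad^\ast_\xi x}=-\sum_i\xi([x,a_i])\varphi_{b_i}-\sum_i\xi(a_i)\varphi_{[x,b_i]}$.

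Subtracting, the $\sum_i\xi([x,a_i])\varphi_{b_i}$ contributions cancel and one is left with $\psi_{ad^\ast_x\xi}-\varphi_{ad^\ast_\xi x}=\sum_i\xi(a_i)\varphi_{[x,b_i]}$, which is exactly the right-hand side computed above. Thus the compatibility identity holds, and Proposition \ref{Dg1} immediately yields that $\rho$ is a representation of $D(\frak{g})$ on $V$. I expect the main obstacle to be purely bookkeeping: keeping the signs straight in the two coadjoint actions $ad^\ast_x\xi$ and $ad^\ast_\xi x$ and correctly rewriting the dual bracket on $\frak{g}^\ast$ in r-matrix form. It is worth noting that the CYBE itself does not enter this verification — it has already been used in Proposition \ref{Dg3} to make $\psi$ a representation, and only the coboundary relation $\gamma=\delta r$ is needed for the compatibility condition (\ref{Dg1e1}).
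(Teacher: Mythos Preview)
Your proof is correct and follows the same overall strategy as the paper: reduce via Proposition~\ref{Dg1} to the single compatibility identity and verify it by direct computation, with both sides collapsing to $\sum_i\xi(a_i)\,\varphi([x,b_i])$. The organization of the computation differs, however. The paper first rewrites $\psi(ad^\ast_x\xi)$ using the $ad$-invariance of $r+\sigma(r)$ (Proposition~\ref{SSD11}) as a sum of three terms, and then cancels two of them against the formula $ad^\ast_\xi x=\sum_i \xi(b_i)[x,a_i]+\sum_i\xi([x,b_i])a_i$, which itself implicitly relies on the skew-symmetry of $\gamma$. You instead read off $ad^\ast_\xi x$ directly from $\gamma=\delta r$ in its raw form, so the cancellation with $\psi(ad^\ast_x\xi)$ is immediate and no appeal to the invariance of the symmetric part is needed. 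This buys you exactly the observation in your last sentence: the compatibility condition~(\ref{Dg1e1}) uses only the coboundary relation $\gamma=\delta r$, while the CYBE and the $ad$-invariance of $r+\sigma(r)$ enter the proposition only through Proposition~\ref{Dg3}.
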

\begin{proof}
By Proposition \ref{Dg1}, it suffices to show that 
\begin{equation}
\label{Dg5e1}
\psi(ad^\ast_x\xi)-\varphi(ad^\ast_\xi x)=\varphi(x)\psi(\xi)-\psi(\xi)\varphi(x).
\end{equation}
We begin by expanding the left side of (\ref{Dg5e1}).  First,
\begin{align}
\nonumber
\psi(ad^\ast_x\xi)&=\sum_i (ad^\ast_x\xi)(a_i)\varphi(b_i)\\
\label{Dg5e2}
&=\sum_i \xi([a_i,x])\varphi(b_i).
\end{align}
By Proposition \ref{SSD11}, $\sum_i a_i\otimes b_i+\sum_i b_i\otimes a_i$ is invariant under the adjoint action of $\frak{g}$.  Hence, 
\begin{equation}
\label{Dg5e3}
\sum_i [a_i,x]\otimes b_i = \sum_i a_i\otimes [x,b_i]+\sum_i [x,b_i]\otimes a_i+\sum_i b_i\otimes [x,a_i].
\end{equation}
Equations (\ref{Dg5e2}) and (\ref{Dg5e3}) now imply
\begin{equation}
\label{Dg5e4}
\psi(ad^\ast_x\xi)=\sum_i \xi(a_i)\varphi([x,b_i])+\sum_i \xi([x,b_i])\varphi(a_i)+\sum_i \xi(b_i)\varphi([x,a_i]).
\end{equation}
Next, we note that
\begin{align}
\label{Dg5e5}
ad^\ast_\xi x=\sum_i \xi(b_i)[x,a_i]+\sum_i\xi([x,b_i])a_i.
\end{align}
From (\ref{Dg5e4}) and (\ref{Dg5e5}), we have
\begin{equation}
\label{Dg5e6}
\psi(ad^\ast_x\xi)-\varphi(ad^\ast_\xi x)=\sum_i \xi(a_i)\varphi([x,b_i]).
\end{equation}
For the right side of (\ref{Dg5e1}), we have
\begin{align}
\nonumber
\varphi(x)\psi(\xi)-\psi(\xi)\varphi(x)&=\sum_i \xi(a_i)\varphi(x)\varphi(b_i)- \sum_i \xi(a_i)\varphi(b_i)\varphi(x)\\
\nonumber
&=\sum_i \xi(a_i)\varphi([x,b_i])\\
\label{Dg5e7}
&=\psi(ad^\ast_x\xi)-\varphi(ad^\ast_\xi x),
\end{align}
where the last equality follows from (\ref{Dg5e6}).  This completes the proof.
\end{proof}

\begin{theorem}
\label{Dg6}
Let $\frak{g}$ be a finite dimensional quasitriangular Lie bialgebra.   Let $(\frak{q},\beta,\varphi)$ be any $\frak{g}$-quasi-Frobenius Lie algebra.  Then there exists a representation $\rho: D(\frak{g})\rightarrow \frak{gl}(\frak{q})$ such that $\rho|_{\frak{g}}=\varphi$ and $(\frak{q},\beta,\rho)$ is a $D(\frak{g})$-quasi-Frobenius Lie algebra.  
\end{theorem}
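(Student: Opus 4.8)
The plan is to assemble the statement from the preparatory results of this section, which between them already contain all the computational content. Since $\frak{g}$ is quasitriangular, fix an $r$-matrix $r=\sum_i a_i\otimes b_i$ solving the CYBE, and let $(\frak{q},\beta,\varphi)$ be the given $\frak{g}$-quasi-Frobenius Lie algebra. First I would introduce the candidate $\frak{g}^\ast$-action $\psi:\frak{g}^\ast\rightarrow\frak{gl}(\frak{q})$, $\psi(\xi):=\sum_i\xi(a_i)\varphi(b_i)$, exactly as in Proposition \ref{Dg3}, and then define $\rho:D(\frak{g})\rightarrow\frak{gl}(\frak{q})$ by $\rho(x+\xi):=\varphi(x)+\psi(\xi)$ for $x\in\frak{g}$ and $\xi\in\frak{g}^\ast$. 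By construction $\rho|_{\frak{g}}=\varphi$, which is one half of the claim, so it remains to check that $(\frak{q},\beta,\rho)$ is a $D(\frak{g})$-quasi-Frobenius Lie algebra.

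For that I would invoke Proposition \ref{Dg2}, which reduces the assertion to three conditions on the pair $(\varphi,\psi)=(\rho|_{\frak{g}},\rho|_{\frak{g}^\ast})$: the compatibility identity $\psi(ad^\ast_x\xi)-\varphi(ad^\ast_\xi x)=\varphi(x)\psi(\xi)-\psi(\xi)\varphi(x)$; that $(\frak{q},\beta,\varphi)$ is $\frak{g}$-quasi-Frobenius; and that $(\frak{q},\beta,\psi)$ is $\frak{g}^\ast$-quasi-Frobenius. The second is given. The third is precisely Corollary \ref{Dg4}: Proposition \ref{Dg3} shows $\psi$ is a representation of $\frak{g}^\ast$, and since each $\psi(\xi)$ is an $\mathbb{R}$-linear combination of the maps $\varphi(b_i)$ --- each of which acts on $\frak{q}$ by a derivation and satisfies $\beta(\varphi(b_i)u,v)+\beta(u,\varphi(b_i)v)=0$ --- the same two properties, both stable under linear combinations, hold for every $\psi(\xi)$. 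The first condition is exactly the content of Proposition \ref{Dg5}, established there from the CYBE together with the ad-invariance of $r+\sigma(r)$ supplied by Proposition \ref{SSD11}.

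With all three conditions verified, Proposition \ref{Dg2} yields that $(\frak{q},\beta,\rho)$ is a $D(\frak{g})$-quasi-Frobenius Lie algebra, completing the proof. I do not expect any genuine obstacle here: the nontrivial algebra has been isolated into Propositions \ref{Dg3} and \ref{Dg5}, and the only point deserving an explicit sentence is the observation that \emph{acting via derivations} and \emph{$\frak{g}$-invariance of $\beta$} are linear conditions, so $\psi(\xi)=\sum_i\xi(a_i)\varphi(b_i)$ automatically inherits them from the generators $\varphi(b_i)$. It is also worth recording the special case in which $\frak{g}$ carries the trivial Lie bialgebra structure $\gamma\equiv 0$: one may then take $r=0$, so that $\psi\equiv 0$ and $\rho$ simply extends $\varphi$ by the zero action of $\frak{g}^\ast$, which recovers the claim from the introduction that every $\frak{g}$-quasi-Frobenius Lie algebra is automatically a $D(\frak{g})$-quasi-Frobenius Lie algebra for $D(\frak{g})$ the double of $(\frak{g},0)$.
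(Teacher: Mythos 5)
Your proposal is correct and follows essentially the same route as the paper: define $\psi(\xi)=\sum_i\xi(a_i)\varphi(b_i)$ via Proposition \ref{Dg3}, set $\rho(x+\xi)=\varphi(x)+\psi(\xi)$, and conclude from Corollary \ref{Dg4}, Proposition \ref{Dg5}, and the characterization in Proposition \ref{Dg2}. Your explicit remark that the derivation property and $\beta$-invariance are linear conditions (hence inherited by $\psi(\xi)$ from the $\varphi(b_i)$) simply spells out what the paper leaves as ``Immediate'' in Corollary \ref{Dg4}, and your closing observation about $r=0$ matches the paper's Corollary \ref{Dg7}.
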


\begin{proof}
Let $r\in \frak{g}\otimes \frak{g}$ be the r-matrix associated to $\frak{g}$ and let $\psi: \frak{g}^\ast\rightarrow \frak{gl}(\frak{q})$ be the representation of $\frak{g}^\ast$ on $\frak{q}$ determined by $\varphi$ and $r$ according to Proposition \ref{Dg3}.  By Corollary \ref{Dg4}, $(\frak{q},\beta,\psi)$ is a $\frak{g}^\ast$-quasi-Frobenius Lie algebra.  Define $\rho: D(\frak{g})\rightarrow \frak{gl}(\frak{q})$ by 
\begin{equation}
\nonumber
\rho(x+\xi):=\varphi(x)+\psi(\xi),\hspace*{0.1in} \forall~x\in \frak{g},~\xi\in \frak{g}^\ast.
\end{equation}
By Proposition \ref{Dg5}, $\rho$ is a representation of $D(\frak{g})$ on $\frak{q}$.  Since $(\frak{q},\beta,\varphi)$ and $(\frak{q},\beta,\psi)$ are $\frak{g}$ and $\frak{g}^\ast$-quasi-Frobenius Lie algebras and $\rho|_{\frak{g}}=\varphi$ and $\rho|_{\frak{g}^\ast}=\psi$ (by definition), it follows that $(\frak{q},\beta,\rho)$ is a $D(\frak{g})$-quasi-Frobenius Lie algebra.
\end{proof}

\begin{corollary}
\label{Dg7}
Let $\frak{g}$ be any finite dimensional Lie algebra and let $(\frak{q},\beta,\varphi)$ be any $\frak{g}$-quasi-Frobenius Lie algbera.  Let $D(\frak{g})$ be the Drinfeld double of the Lie bialgebra $(\frak{g},\gamma)$ where $\gamma\equiv 0$.  Define $\rho: D(\frak{g})\rightarrow \frak{gl}(\frak{q})$ by $\rho(x+\xi)=\varphi(x)$ for all $x\in \frak{g}$, $\xi\in \frak{g}^\ast$.  Then $(\frak{q},\beta,\rho)$ is a $D(\frak{g})$-quasi-Frobenius Lie algebra.
\end{corollary}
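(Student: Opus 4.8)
The plan is to obtain this as a special case of Theorem \ref{Dg6}. First I would check that the trivial Lie bialgebra $(\frak{g},\gamma)$ with $\gamma\equiv 0$ (Example \ref{SSD3}) is quasitriangular with $r$-matrix $r=0$. Indeed $\gamma\equiv 0=\delta r$ for $r=0$, so $(\frak{g},0)$ is a coboundary Lie bialgebra in the sense of Definition \ref{SSD10}; and the classical Yang--Baxter equation $[[r,r]]=0$ (equation (\ref{SSD12})) holds trivially for $r=0$, since every summand appearing in (\ref{SSD6}) vanishes. Hence $(\frak{g},0)$ is quasitriangular by Definition \ref{SSD13}, its $r$-matrix being the zero tensor, which we may write as $r=\sum_i a_i\otimes b_i$ with all $a_i\otimes b_i=0$.

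Given a $\frak{g}$-quasi-Frobenius Lie algebra $(\frak{q},\beta,\varphi)$, I would then apply Theorem \ref{Dg6} to the finite dimensional quasitriangular Lie bialgebra $(\frak{g},0)$. This produces a representation $\widehat{\rho}: D(\frak{g})\rightarrow \frak{gl}(\frak{q})$ with $\widehat{\rho}|_{\frak{g}}=\varphi$ such that $(\frak{q},\beta,\widehat{\rho})$ is a $D(\frak{g})$-quasi-Frobenius Lie algebra; by construction $\widehat{\rho}(x+\xi)=\varphi(x)+\psi(\xi)$, where $\psi:\frak{g}^\ast\rightarrow\frak{gl}(\frak{q})$ is the representation of $\frak{g}^\ast$ built from $\varphi$ and $r$ via Proposition \ref{Dg3}. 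Since $r=0$, formula (\ref{Dg3e1}) gives $\psi(\xi)=\sum_i\xi(a_i)\varphi(b_i)=0$ for every $\xi\in\frak{g}^\ast$, so $\widehat{\rho}(x+\xi)=\varphi(x)$ for all $x\in\frak{g}$, $\xi\in\frak{g}^\ast$. Thus $\widehat{\rho}$ coincides with the map $\rho$ of the statement, and $(\frak{q},\beta,\rho)$ is a $D(\frak{g})$-quasi-Frobenius Lie algebra, as claimed.

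I do not expect any real obstacle: the only points needing minimal care are that $r=0$ genuinely makes $(\frak{g},0)$ quasitriangular and that the induced $\frak{g}^\ast$-action degenerates to the zero action. As a self-contained alternative, one can verify the three conditions of Proposition \ref{Dg2} directly with $\varphi$ as given and $\psi\equiv 0$: the condition that $(\frak{q},\beta,\varphi)$ be a $\frak{g}$-quasi-Frobenius Lie algebra is exactly the hypothesis; the condition that $(\frak{q},\beta,\psi)$ be a $\frak{g}^\ast$-quasi-Frobenius Lie algebra is immediate since the zero map acts by derivations and leaves $\beta$ invariant (note $\frak{g}^\ast$ is abelian here, its bracket being the dual of $\gamma\equiv 0$); and the compatibility condition $\psi_{ad^\ast_x\xi}-\varphi_{ad^\ast_\xi x}=\varphi_x\circ\psi_\xi-\psi_\xi\circ\varphi_x$ reduces to $\varphi_{ad^\ast_\xi x}=0$, which holds because $ad^\ast_\xi=0$ on $\frak{g}$ (the bracket on $\frak{g}^\ast$ vanishes), so $ad^\ast_\xi x=0$ for all $x\in\frak{g}$, $\xi\in\frak{g}^\ast$.
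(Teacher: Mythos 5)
Your proposal is correct and follows essentially the same route as the paper: view $(\frak{g},0)$ as a quasitriangular Lie bialgebra with $r=0$ and invoke (the proof of) Theorem \ref{Dg6}, noting that the induced $\frak{g}^\ast$-action $\psi$ vanishes so the resulting representation is exactly $\rho(x+\xi)=\varphi(x)$. Your spelled-out check that $r=0$ satisfies the CYBE and your alternative direct verification via Proposition \ref{Dg2} are fine elaborations of details the paper leaves implicit.
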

\begin{proof}
 $(\frak{g},\gamma)$ is naturally a quasitriangular Lie bialgebra with r-matrix $r\equiv 0\in \frak{g}\otimes \frak{g}$.  Corollary \ref{Dg7} now follows as a special case of the proof of Theorem \ref{Dg6}. 
 \end{proof}

\noindent We conclude the paper with an example.
\begin{example}
Let $(\frak{q},\beta)$ be the 4-dimensional quasi-Frobenius Lie algebra from Example \ref{gqFLA13}.  For convenience, we recall its structure: $\frak{q}$ has basis $\{e_1,e_2,e_3,e_4\}$ with non-zero commutator relations given by
\begin{equation}
\nonumber
[e_1,e_2]=e_2,\hspace*{0.1in} [e_1,e_3]=e_3,\hspace*{0.1in} [e_1,e_4]=2e_4,\hspace*{0.1in} [e_2,e_3]=e_4,
\end{equation}
and the matrix representation of $\beta$ with respect to $\{e_1,e_2,e_3,e_4\}$ is 
\begin{equation}
\nonumber
\left(\beta_{ij}\right)=\left(\begin{array}{llll}
0 & 0 & 0 & 2\\
0 & 0 & 1 & 0\\
0 & -1 & 0 & 0\\
-2 & 0 & 0 & 0
\end{array}\right)
\end{equation}
Let $(\frak{g},\delta r)$ be the 2-dimensional triangular Lie bialgebra from Example \ref{trLBA} and \ref{DrinExample}.  Once again, we recall the structure for convenience.  $\frak{g}$ has basis $\{x,y\}$ with commutator relation $[x,y]=x$ and r-matrix $r=y\wedge x$.   Let $\{x^\ast,y^\ast\}$ denote the corresponding dual basis.  The commutator relations on $D(\frak{g})$ are 
\begin{align}
\nonumber
[x,y]&=x,\hspace*{0.1in} [x^\ast,y^\ast]=y^\ast,\hspace*{0.1in} [x,x^\ast]=-y^\ast,\hspace*{0.1in}[x,y^\ast]=0\\
\nonumber
& [y,x^\ast]=x^\ast+y,\hspace*{0.1in} [y,y^\ast]=-x
\end{align}
Let $\varphi: \frak{g}\rightarrow \frak{gl}(\frak{q})$ be the linear map defined by 
\begin{equation}
\nonumber
\varphi_x(e_1)=0,\hspace*{0.1in} \varphi_x(e_2)=0,\hspace*{0.1in} \varphi_x(e_3)=e_2,\hspace*{0.1in} \varphi_x(e_4)=0
\end{equation}
\begin{equation}
\nonumber
\varphi_y(e_1)=0,\hspace*{0.1in}\varphi_y(e_2)=-\frac{1}{2}e_2,\hspace*{0.1in}\varphi_y(e_3)=\frac{1}{2}e_3,\hspace*{0.1in}\varphi_y(e_4)=0.
\end{equation}
Consideration of Example \ref{gqFLA13} (or a direct calculation) shows that $(\frak{q},\beta,\varphi)$ is a $\frak{g}$-quasi-Frobenius Lie algebra.  By Theorem \ref{Dg6}, there exists a representation $\rho: D(\frak{g})\rightarrow \frak{gl}(\frak{q})$ such that $\rho|_{\frak{g}}=\varphi$ and $(\frak{q},\beta,\rho)$ is a $D(\frak{g})$-quasi-Frobenius Lie algebra.   We now compute $\rho$ explicitly.  From the proof of Theorem \ref{Dg6}, this amounts to computing the representation $\psi: \frak{g}^\ast\rightarrow \frak{gl}(\frak{q})$ which is determined by $\varphi$ and $r=y\wedge x$ according to Proposition \ref{Dg3}:
\begin{equation}
\nonumber
\psi_{x^\ast}=-\varphi_y,\hspace*{0.1in} \psi_{y^\ast}=\varphi_x.
\end{equation}
$\rho$ is then uniquely defined by $\rho|_{\frak{g}}=\varphi$ and $\rho|_{\frak{g}^\ast}=\psi$.
\end{example}

\end{document}